\documentclass[journal,web]{ieeecolor}
\usepackage{generic}
\usepackage{cite}
\usepackage{amsmath}
\usepackage{amssymb,bm}
\usepackage{theoremref}
\usepackage{color}
\usepackage{apptools}
\usepackage{graphicx}

\newtheorem{thm}{Theorem}

\newtheorem{prop}{Proposition}
\newtheorem{cor}{Corollary}
\newtheorem{defn}{Definition}
\newtheorem{asmp}{Assumption}

\newtheorem{exmp}{Example}
\newtheorem{rem}{Remark}

\DeclareBoldMathCommand{\grad}{\mathrm{grad}}
\DeclareBoldMathCommand{\hess}{\mathrm{Hess}}
\newcommand{\Real}{\mathbb{R}}
\newcommand{\Man}{\mathcal{M}}
\newcommand{\Dom}{\mathcal{U}}
\newcommand{\Ei}{\mathcal{E}}

\begin{document}
	\title{Non-Holonomic Gradient Play: Leafwise Nash Equilibria, Stability, and Deception}
	\author{Mahmoud Abdelgalil, Miroslav Krstic, and Jorge I. Poveda
		\thanks{M. Abdelgalil ({\tt\small maabdelg@buffalo.edu}) is with the Department of Mechanical and Aerospace Engineering, University at Buffalo, State University of New York, Buffalo, NY, 14260, USA. 
			Miroslav Krstic ({\tt mkrstic@ucsd.edu}) is with the Department of Mechanical and Aerospace Engineering, University of California San Diego, La Jolla, CA 92093, USA. 
			J. I. Poveda ({\tt\small jipoveda@ucsd.edu}) is with the Department of Electrical and Computer Engineering,
			University of California San Diego, La Jolla, CA 92093, USA. This work was supported in part by DARPA under Grant No. HR00112530225.
		}
	}
	
	\maketitle
	\pagestyle{empty}
	\thispagestyle{empty}
	\begin{abstract}
		We study generalized learning dynamics in multi-agent systems whose joint state evolves on a manifold and whose agents act through state-dependent, potentially nonholonomic vector fields. Under a bundle-splitting condition, we show that these dynamics admit an intrinsic representation as projected
		Riemannian gradients, giving rise to a class of \emph{nonholonomic gradient play} dynamics. We characterize the local stability of its equilibria through an intrinsic linearization that explicitly captures the effects of the Riemannian connection and the nonholonomy of the actuation frame. The framework recovers classical gradient play on Euclidean spaces and manifolds as special cases while accommodating nonholonomic actuation. We then develop a geometric theory of deception under asymmetric information, whereby an
		agent exploits privileged knowledge of other agents' learning rules to
		manipulate the emerging equilibrium. We show that deception effectively
		\emph{tilts the Riemannian geometry} perceived by the oblivious agents,
		distorting their projected gradient directions. We establish persistence
		of exponentially stable equilibria over an open set of deception parameters and derive an explicit first-order characterization of the resulting equilibrium displacement. Analytical and numerical examples illustrate the framework and its implications for equilibrium manipulation in games.
	\end{abstract}
	
	\begin{IEEEkeywords}
		Multi-agent learning, geometric control, stability analysis, games on manifolds, deception.
	\end{IEEEkeywords}

	\section{INTRODUCTION}
	\IEEEPARstart{S}{trategic} interactions between multi-agent autonomous decision-making systems are often constrained by geometry, with examples spanning many practical engineering problems, e.g., rigid body attitude coordination \cite{sarlette2009autonomous,ren2009distributed}, distributed resource allocation \cite{poveda2013distributed,ochoa2019hybrid}, and synchronization of coupled oscillators \cite{yin2011synchronization}. These settings have two distinguishing structural features. First, the joint state of the interaction evolves on a manifold rather than on a vector space. Second, each agent can influence a restricted set of directions of the joint state, and those admissible directions typically vary with the state itself, i.e., each agent acts through a family of state-dependent control vector fields. If, in addition, the agents pursue conflictive objectives described by individual payoff functions, the natural model of their interaction is given by \emph{gradient play dynamics} wherein each agent seeks to maximize its own payoff along the vector fields it controls \cite{shamma2005dynamic,toonsi2024higher,li2025passivity}, typically leading to convergence to Nash equilibria. 
	
	The earliest stability analysis of gradient play dynamics in games with continuous strategy spaces seems to be \cite{ratliff2016characterization,mazumdar2020gradient}, where differential characterizations of equilibria and stability are given for games whose strategy spaces are finite-dimensional manifolds. However, the learning dynamics considered therein are expressed via partial derivatives of the payoffs with respect to a given choice of coordinates. Since gradient vector fields on a manifold are defined only relative to a Riemannian metric, and no such metric is introduced in \cite{ratliff2016characterization,mazumdar2020gradient}, neither the dynamics nor the ensuing stability conditions are coordinate independent. Moreover, existing works in the literature typically assume that each agent actuates the coordinates of its own strategy space directly and independently, a framework that does not accommodate agents that act through a restricted family of vector fields, let alone a non-commuting one, even if the family collectively spans the tangent bundle of the manifold. Such structures arise naturally in nonholonomic and underactuated systems, including wheeled mobile robots, autonomous vehicles, robotic manipulators, and aerial and marine vehicles, where admissible motions are constrained by the system's actuation geometry. While the literature on learning in games provides a rich set of tools for
	dynamics in flat spaces, typically $\mathbb{R}^n$ or constrained subsets
	thereof in generalized Nash equilibrium problems \cite{belgioioso2022distributed}, to the best of our
	knowledge, no intrinsic theory exists for games whose joint state evolves
	on a manifold and whose agents act through state-dependent, possibly
	noncommuting vector fields.
	
	
	Motivated by the previous background, the first contribution of the present manuscript is precisely an \emph{intrinsic theory of gradient play dynamics} for multi-agent systems evolving on smooth manifolds. Specifically, under a natural bundle-splitting assumption on the agents' actuation directions, we construct a Riemannian metric on the state space with respect to which the interaction dynamics amounts to a sum of \emph{projected Riemannian gradients}, one per agent, with each projection being onto the distribution spanned by the corresponding agent's control vector fields. We refer to the resulting dynamics as \emph{non-holonomic gradient play}, and to the corresponding vector field as the \emph{Riemannian pseudogradient flow}. This representation yields an equilibrium characterization that is coordinate independent. In particular, equilibria are determined solely by the payoffs of the agents and the geometry of the actuation directions. We then characterize the structural and the asymptotic stability properties of equilibria via an intrinsic linearization of the non-holonomic pseudo-gradient, which decomposes into two types of terms: Riemannian Hessian terms, and \emph{Connection} terms. The latter type encodes the non-holonomy of the actuation frame and has no counterpart in the coordinate-based analysis considered in the game-theoretic literature \cite{ratliff2016characterization,mazumdar2020gradient}. Since typical coordinate-dependent notions of Nash equilibria \cite{nash1951non,kristaly2014nash} are inapplicable in our setting, we introduce the notion \emph{Leafwise Nash equilibria} to capture the game-theoretic properties of the underlying equilibrium points of the dynamics. Sufficient stability conditions of the block Gershgorin type are then derived, and two special cases are worked out in detail. First, when each agent's distribution is involutive, we show that the diagonal blocks of the linearization reduce to intrinsic Hessians of restricted payoffs. When, in addition, the agents' vector fields mutually commute, the state space acquires a local product structure and the characterizations of \cite{ratliff2016characterization,mazumdar2020gradient} are recovered as the flat special case. The identical interest game \cite{monderer1996fictitious}, wherein the dynamics reduce to an intrinsic gradient flow, is likewise subsumed.
	
	The second main contribution of this paper concerns \emph{deception} under asymmetric information, and it is here that the generality of the first contribution proves instrumental. Specifically, in contested or adversarial environments, agents need not be
	equally informed: privileged agents may have knowledge of the learning rules implemented by their competitors. Recent works
	\cite{tang2025deception,tang2025deception1,tang2025stochastic,
		tutorialCDC2026,vamvoudakis2026tutorial} have shown that, in Euclidean product strategy spaces with additive actuation, such information asymmetries can be exploited to manipulate the emerging equilibria without tampering with other agents' measurements or control actions. These mechanisms, however, rely on
	the additive structure of $\mathbb{R}^n$ and do not directly extend to the geometric setting considered here.
	
	In contrast, within our framework, we show that deception admits an intrinsic geometric interpretation: \emph{by strategically modifying its own control signals, a deceptive agent can ``tilt'' the oblivious agent's effective actuation bundle along its own, thereby distorting the Riemannian projections that govern its nonholonomic gradient dynamics}.  These tilted bundles may fail to be involutive even when their nominal counterparts are, making the general nonholonomic theory developed in the first part of the paper essential for the analysis of deception. Using this framework, we show that exponentially stable equilibria, together with their stability, persist over an open set of deception parameters, and derive an explicit first-order formula for the resulting equilibrium displacement. To the best of our knowledge, this work provides the first geometric-control analysis of nonholonomic gradient play in noncooperative games defined on manifolds and its associated geometric notions of Nash equilibrium, and, independently, the first intrinsic geometric characterization of deception in multi-agent learning dynamics.
	
	The remainder of the paper is organized as follows. Section~II introduces
	the notation. Section~\ref{sec:prob_formulation}
	presents the problem formulation.
	Section~\ref{sec:no_deception_stability} presents the main results under symmetric information.
	Section~\ref{sec:deception_stability} studies deception. Section~VI contains the proofs, and
	Section~VII concludes the paper.
	\section{PRELIMINARIES AND NOTATION}
	Throughout, we use standard differential geometric notation consistent with, e.g., \cite{lee2012smooth}. Let $\mathcal{M}$ be a smooth $n$-dimensional manifold, $T\mathcal{M}$ be its tangent bundle, $T^*\mathcal{M}$ its cotangent bundle. We use $\Gamma(T\mathcal{M} \otimes \cdots \otimes T\mathcal{M} \otimes T^*\mathcal{M} \otimes \cdots\otimes T^*\mathcal{M})$ to denote the space of smooth sections of the bundle $T\mathcal{M} \otimes \cdots \otimes T\mathcal{M} \otimes T^*\mathcal{M} \otimes \cdots \otimes T^*\mathcal{M}$, so that the space of smooth vector fields is $\Gamma(T\mathcal{M})$, the space of smooth differential $1$-forms is $\Gamma(T^*\mathcal{M})$, and so on.
	For brevity, we also use $\mathfrak{X}(\mathcal{M})$ to denote the space of smooth vector fields on $\mathcal{M}$, and $\Omega(\mathcal{M})$ to denote the space of smooth differential $1$-forms on $\mathcal{M}$.
	The symbol $\otimes$ denotes \emph{tensor product} so that, if $f_1,f_2\in\mathfrak{X}(\mathcal{M})$ and $\omega_1,\omega_2\in\Omega(\mathcal{M})$, then $f_1\otimes f_2\in \Gamma(T\mathcal{M}\otimes T\mathcal{M})$, $\omega_1\otimes\omega_2\in \Gamma(T^*\mathcal{M}\otimes T^*\mathcal{M})$, $\omega_i\otimes f_i \in \Gamma(T^*\mathcal{M}\otimes T\mathcal{M})$, and so on. For $f\in \mathfrak{X}(\mathcal{M})$ and $J$ a smooth function, the Lie derivative of $J$ along $f$ is $L_f J(x) := dJ(x)\big[f(x)\big]$, where $dJ\in\Omega(\mathcal{M})$ denotes the differential of $J$. A Riemannian metric on $\mathcal{M}$, which is a symmetric positive definite section of $\Gamma(T^*\mathcal{M}\otimes T^*\mathcal{M})$, is denoted by $g$.
	Given a smooth function $J$, the Riemannian gradient with respect to $g$, denoted by $\grad(J)$, is the unique vector field satisfying $g(\grad(J), X) = dJ[X], \quad \forall X\in \mathfrak{X}(\mathcal{M})$. The Riemannian \emph{Hessian} of $J$ is the unique symmetric section of $\Gamma(T^*\mathcal{M}\otimes T^*\mathcal{M})$ that satisfies $\hess(J)[X,Y]:=g(\nabla_X\grad(J), Y)$, where $\nabla:\mathfrak{X}(\mathcal{M})\times\mathfrak{X}(\mathcal{M})\rightarrow \mathfrak{X}(\mathcal{M})$ is the Levi-Civita connection associated with the metric $g$. The fact that the Riemannian Hessian is symmetric, i.e., $\hess(J)[X,Y]=\hess(J)[Y,X]$, follows from the torsion-free nature of the Levi-Civita connection. If $\mathcal{E}\subset T\mathcal{M}$ is a smooth sub-bundle and $P_{\mathcal{E}}:T\mathcal{M}\to \mathcal{E}$ is the $g$-orthogonal projection, then we define the \emph{projected gradient} $\grad_{\mathcal{E}}(J) := P_{\mathcal{E}}\,\grad(J)$. Finally, we use $\cdot|_x$ to denote the evaluation operation, i.e., if $f\in \mathfrak{X}(\mathcal{M})$, then $f|_{x}=f(x)\in T_x\mathcal{M}$, and so on.
	\section{PROBLEM FORMULATION}\label{sec:prob_formulation}
	In this section, we describe the problem formulation, as well as its connections to the literature on learning in games. 

	\vspace{-0.3cm}
	\subsection{Geometric Learning in Games}
	We consider $N$ independent decision-making agents interacting through a strategic environment modeled as a smooth $n$-dimensional manifold $\mathcal{M}$. The $i^{\rm th}$ agent influences the environmental state $x\in\mathcal{M}$ through a collection of vector fields $\{f_{i,j}\}_{j=1}^{d_i}\subset\mathfrak{X}(\mathcal{M})$, with its actions determined by the control inputs $(u_{i,1},\ldots,u_{i,d_i})\in\mathbb{R}^{d_i}$. These inputs act on the environment through a control-affine structure, yielding the dynamics
	\begin{align}\label{eq:env_model}
		\dot{x}&=\sum_{i=1}^N\sum_{j=1}^{d_i} u_{i,j} f_{i,j}(x),~~~~x\in\mathcal{M}.
	\end{align}
	In the non-cooperative setting, each agent is interested in optimizing their own payoff, which may, in general, be different from other agents' payoffs. We model this setting by assigning to each agent a sufficiently regular function $J_i$, for $i\in\{1,2,\ldots,N\}$. In this noncooperative setting, a typical objective of the agents is to converge to a Nash equilibrium \cite{rosen1965existence}. A block-diagram representation of the class of systems considered in this paper is shown in Figure~\ref{fig:env_model}.
	\begin{figure}
		\centering
		\includegraphics[width=0.8\linewidth]{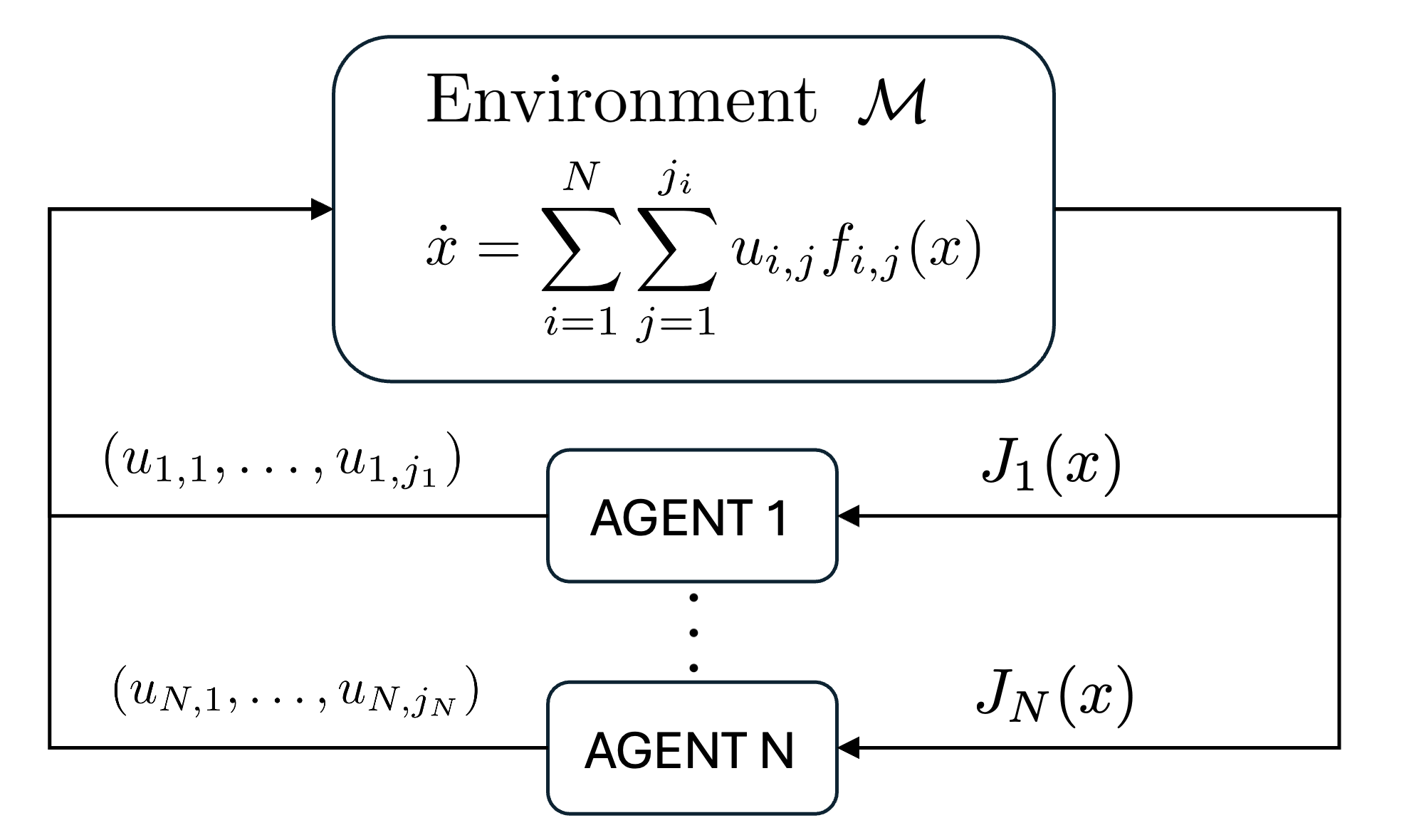}
		\caption{Block diagram of the strategic interaction between $N$ independent decision-making agents coupled only through the environment.}
		\label{fig:env_model}
	\end{figure}

	In multi-agent systems evolving in $\mathbb{R}^n$, to converge to a Nash equilibrium, it is common to consider decentralized feedback laws $u_i$ based on the partial gradient (i.e., pseudogradient) information of the cost functions of the agents, i.e., gradient play \cite{shamma2005dynamic,toonsi2024higher}. For the control-affine structures of the form \eqref{eq:env_model}, the natural extension is given by the feedback law:
	\begin{equation}
		u_{ij}(x)=\kappa_i L_{f_{i,j}}(J_i)(x),
	\end{equation}
	where $\kappa_i>0$ is a tunable gain. This choice leads to the following dynamics:
	\begin{align}\label{eq:liebra_system}
		\dot{x}&= \sum_{i=1}^N\kappa_{i} \sum_{j=1}^{d_i} L_{f_{i,j}}(J_i)(x) f_{i,j}(x), ~~~x\in\mathcal{M}.
	\end{align}
	System \eqref{eq:liebra_system} is the main object of study of this paper. As shown in the next section, under mild regularity conditions
	on $\mathcal{M}$, $\{f_{i,j}\}$, and $J_i$, these dynamics admit an intrinsic
	interpretation as nonholonomic gradient play, whose equilibria define a
	generalized notion of Nash equilibrium that accounts for the geometric
	constraints on the players' feasible actions.
	\begin{rem}
		Dynamics of the form \eqref{eq:liebra_system} arise naturally in multi-agent
		systems where agents optimize individual objectives subject to physical or
		kinematic actuation constraints. Examples include nonholonomic mobile robots,
		autonomous vehicles, robotic manipulators, and spacecraft, whose configurations
		evolve on nonlinear manifolds and whose feasible motions are restricted to
		agent-specific distributions $\mathcal{E}_i$. In such settings, an agent cannot
		generally move along the full gradient of its payoff and must instead exploit
		the components of the gradient along its admissible directions. 
		\hfill $\square$
	\end{rem}
	\begin{rem}[Nonholonomic vs Standard Gradient Play]
		When $\mathcal{M}=\mathbb{R}^n$, and the agents' vector fields are constant and decoupled, system \eqref{eq:liebra_system} recovers the standard \emph{gradient-play dynamics} $\dot{x}_i=\nabla_i J_i$ \cite{li2025passivity,shamma2005dynamic}. While the convergence and stability properties of standard gradient play are well understood, the more general manifold-constrained dynamics \eqref{eq:liebra_system}, particularly in the presence of state-dependent and potentially non-commuting actuation directions, have remained unexplored in the context of games. \hfill $\square$
	\end{rem}
	
	\begin{rem}[Model-Free Implementation]\label{remark1}
		Systems of the form \eqref{eq:liebra_system} also emerge in the analysis of \emph{model-free} (i.e., payoff-based) decision-making algorithms, either via Lie-bracket deterministic approximations ~\cite{durr2013liebracket,scheinker2013model,tutorialCDC2026} or Lie-bracket stochastic approximations \cite{LieBraSA}. In the former case, \eqref{eq:liebra_system} emerges as the average system obtained via the oscillatory control law 
		\begin{equation}\label{eq:es_law}
			\begin{aligned}
				u_{i,j}&=c_{i,j}\left(  \cos(\iota_{ij} t) F^1_{i,j}(J_i(x))- \sin(\iota_{ij} t) F^2_{i,j}(J_i(x))\right),
			\end{aligned}
		\end{equation}
		where $c_{ij}=\sqrt{2\kappa_{i} \iota_{ij}}$, $\iota_{ij}=2\pi\omega_{ij}\omega$, $\omega\in\mathbb{R}_{>0}$ is a tunable parameter, the constants $\cup_{i=1}^N\{\omega_{i,j}\}_{j=1}^{d_i}\subset \mathbb{Q}_{>0}$ are any collection of positive rational numbers such that $\omega_{i,j}=\omega_{i',j'}$ if and only if $i=i'$ and $j=j'$, $\kappa_{i}>0$, and $F^k_{i,j}:\mathbb{R}\rightarrow\mathbb{R}$ is any collection of sufficiently regular functions that satisfy
		\begin{align}\label{eq:es_integral_idnty}
			F^2_{i,j}(y)&= - F^1_{i,j}(y)\int F^1_{i,j}(y)^{-2}\mathrm{d}y.
		\end{align}
		An example of such functions is $F^1_{i,j}(y):=\cos(y)$ and $F^2_{i,j}(y):=-\sin(y)$  \cite{scheinker2013model}; see also \cite{scheinker2014extremum,grushkovskaya2018class}.  In this case, as $\omega\to\infty$, the behavior of the closed-loop system is predicted precisely by system \eqref{eq:liebra_system}. In the context of stochastic approximation \cite{LieBraSA}, system \eqref{eq:liebra_system} arises as the limiting dynamics as the interpolation step size vanishes \cite{LieBraSA}.
		\hfill $\square$
	\end{rem}
	\begin{figure}[t]
		\centering
		\includegraphics[width=0.9\linewidth]{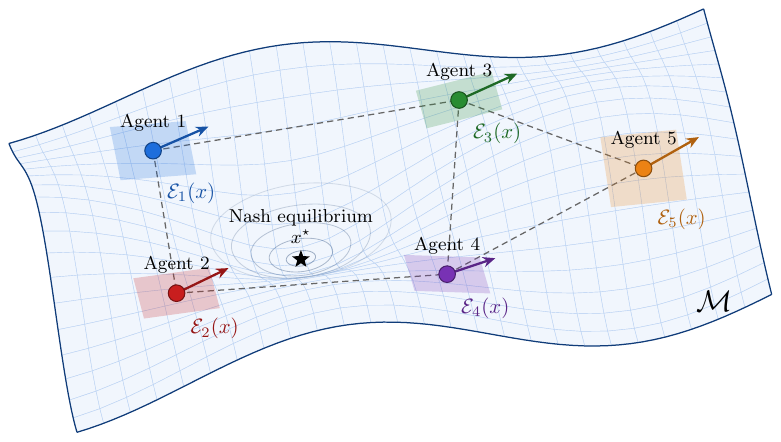}
		\caption{Illustration of the environment state space $\mathcal{M}$ and the multi-agent system interactions via the individual distributions $\mathcal{E}_i(x)=\mathrm{span}_{\mathbb{R}}\{f_{i,j}(x)\}_{j=1}^{d_i}$ (c.f. Assumption 1).}
		\vspace{-0.4cm}
		\label{figure:manifold}
	\end{figure}
	%
	%
	%
	
	\vspace{-0.2cm}
	\subsection{Asymmetric Information and Deception}\label{sec:asym}
	The model developed thus far assumes \emph{information symmetry}, whereby no
	agent has \emph{privileged} knowledge of the feedback laws implemented by
	others. Recent works \cite{tang2025deception,tang2025deception1,tang2025stochastic}
	show that violating this assumption can fundamentally alter the asymptotic
	behavior of multi-agent systems. In particular, under \emph{information
		asymmetry}, an informed agent can strategically modify its own control inputs
	to bias the steady-state behavior of \eqref{eq:env_model}, without directly
	altering the actions or measurements of the \emph{oblivious agents}. We refer
	to such an informed agent as \emph{deceptive}, see the recent tutorials 
	\cite{vamvoudakis2025deception,vamvoudakis2026tutorial}. However, the context in which the phenomenon of  \emph{deception} has been studied in the literature of Nash-Seeking is that of repeated static non-cooperative games on \emph{flat spaces}, particularly, in $\mathbb{R}^n$. Nevertheless, deception in generalized equilibrium-seeking problems on smooth manifolds remains largely unexplored, particularly \emph{with respect to its interplay with the underlying geometry}. Based on this, the second main objective of this paper is to extend the study of deception in adaptive multi-agent decision-making to the geometric setting described by \eqref{eq:env_model}.
	\begin{rem}[Deceptive Nonholonomic Gradient Play]
		To illustrate how asymmetric information can be leveraged to bias the
		asymptotic behavior of the target system \eqref{eq:liebra_system}, we
		consider the setting of Lie-bracket approximations  \footnote{The particular approximation mechanism is immaterial to results of this paper, as the target system \eqref{eq:liebra_system} can arise from different classes of learning algorithms, including those based on Lie-bracket approximations \cite{durr2013liebracket,grushkovskaya2018class}, stochastic approximations \cite{LieBraSA}, etc.} discussed in Remark \ref{remark1}. For simplicity, we consider a two-agent setting (the extension to $N>2$
		agents is discussed in Remark~\ref{rem:beyond_two_agents}), and we refer to
		the $1^{\rm st}$ agent, as the \emph{deceptive agent}. We assume that this agent has
		access to the control input \eqref{eq:es_law} generated by the
		$2^{\rm nd}$ agent, referred to as the \emph{oblivious agent}. By
		leveraging this privileged information, the deceptive agent modifies its
		feedback law according to
		\begin{align}\label{eq:deceptive_es_law}
			\tilde{u}_{1,j}(x,t)
			:=
			u_{1,j}(x,t)
			+
			\sum_{k=1}^{d_2}\gamma_{k}^{j}u_{2,k}(x,t),
		\end{align}
		where $\{\gamma_{k}^{j}\}$ is a collection of tunable deception parameters
		selected by the deceptive agent. Under this modification, the resulting
		closed-loop system can be equivalently written as
		\begin{subequations}
			\begin{align}\label{eq:closed_loop_deception}
				\dot{x}
				&=
				\sum_{i=1}^{2}\sum_{j=1}^{d_i}
				u_{i,j}(x,t)\tilde{f}_{i,j}(x),
			\end{align}
			where
			\begin{align}\label{eq:tilted_directions}
				\tilde{f}_{1,j}
				&:=f_{1,j},
				\qquad
				\tilde{f}_{2,j}
				:=
				f_{2,j}
				+
				\sum_{k=1}^{d_1}\gamma_{j}^{k}f_{1,k}.
			\end{align}
		\end{subequations}
		Consequently, using standard averaging tools \cite{durr2013liebracket,scheinker2013model,grushkovskaya2018class}, it can be shown that the associated Lie-Bracket approximation system is now given by
		\begin{equation}\label{eq:liebra_sys_deception}
			\dot{x}
			=
			\sum_{i=1}^{2}\kappa_i
			\sum_{j=1}^{d_i}
			L_{\tilde{f}_{i,j}}(J_i)(x)\tilde{f}_{i,j}(x).
		\end{equation}
		Thus, deception preserves the structure of \eqref{eq:liebra_system} while \emph{tilting} the oblivious agent's vector fields according to \eqref{eq:tilted_directions}. In this way, by exploiting knowledge of the oblivious agent's control law, the deceptive agent reshapes the effective directions of its payoff dynamics, generally shifting the equilibria of \eqref{eq:liebra_sys_deception}. The nominal dynamics are recovered when $\gamma_j^k\equiv 0$. In Section~\ref{sec:deception_stability}, we will rigorously characterize this scenario. \hfill $\square$
	\end{rem}
	\section{NONHOLONOMIC GRADIENT PLAY}\label{sec:no_deception_stability}
	%
	%
	%
	%
	In this section, we study the equilibria and asymptotic behavior of system \eqref{eq:liebra_system}.

	\vspace{-0.3cm}
	\subsection{Riemannian Pseudogradient Flow and its Equilibria}
	Without further assumptions on the vector fields $f_{i,j}$, finding the equilibria of \eqref{eq:liebra_system} requires solving the general system of equations
	\begin{align}\label{eq:eqbm_coupled}
		\sum_{i=1}^N \kappa_{i} \sum_{j=1}^{d_i} L_{f_{i,j}}(J_i)(x) f_{i,j}(x)&=0.
	\end{align}
	A limitation of \eqref{eq:eqbm_coupled} is that its equilibria generally
	depend on the gains $\kappa_i$, which are chosen independently of the payoff
	functions $J_i$. Nevertheless, there are natural assumptions that allow us to characterize the equilibria of \eqref{eq:eqbm_coupled} for any choice of $\kappa_{i}$. To that end, we impose the following assumption.
	\begin{asmp}\label{asmp:indpenendent_families}
		There exists an open sub-manifold $\mathcal{U}\subset\mathcal{M}$ such that, for all $x\in\mathcal{U}$, we have
		\begin{align}\label{eq:tangent_splitting}
			T_x\mathcal{U} = \mathcal{E}_1(x)\oplus \cdots\oplus \mathcal{E}_N(x),
		\end{align}
		where $\mathcal{E}_i$ is a \emph{regular} distribution on $\mathcal{U}$ defined by the span of the $i^{\rm th}$-family of vector fields $\{f_{i,j}\}_{j=1}^{d_i}$, i.e., $\mathcal{E}_i(x):=\mathrm{span}_{\mathbb{R}}\{f_{i,j}(x)\}_{j=1}^{d_i}$. \hfill $\Box$
	\end{asmp}
	\medskip
	In words, Assumption \ref{asmp:indpenendent_families} asks that the family of vector fields $\cup_{i=1}^{N}\{f_{i,j}\}_{j=1}^{d_i}$ spans the tangent space of the manifold $\mathcal{M}$ for all points in an open set $\mathcal{U}$ and that the intersection of the spans of any two distinct families $\{f_{i,j}\}_{j=1}^{d_i}$ and $\{f_{i',j}\}_{j=1}^{d_{i'}}$ is trivial whenever $i\neq i'$.
	In the context of the multi-agent strategic interaction modeled by \eqref{eq:env_model}, Assumption \ref{asmp:indpenendent_families} essentially requires that the control vector fields influenced by the $i^{\rm th}$-agent are linearly independent from the span of the control vector fields influenced by every other agent at every environment state $x\in\mathcal{U}$ and that, collectively, the family $\cup_{i=1}^{N}\{f_{i,j}\}_{j=1}^{d_i}$ spans the entire tangent bundle $T\mathcal{U}$. We now illustrate this assumption with some examples. In these examples, the distributions $\mathcal{E}_i$ are all rank $1$, i.e., $d_i=1$ for all $i\in\{1,\dots,N\}$. To simplify notation, we use $e_i:=f_{i,1}$ to get rid of superfluous indices.

	\vspace{0.1cm}
	\begin{exmp}\label{exmp:sphere_exmp_no_deception}
		\begin{figure}[t]
			\centering
			\includegraphics[width=0.8\linewidth]{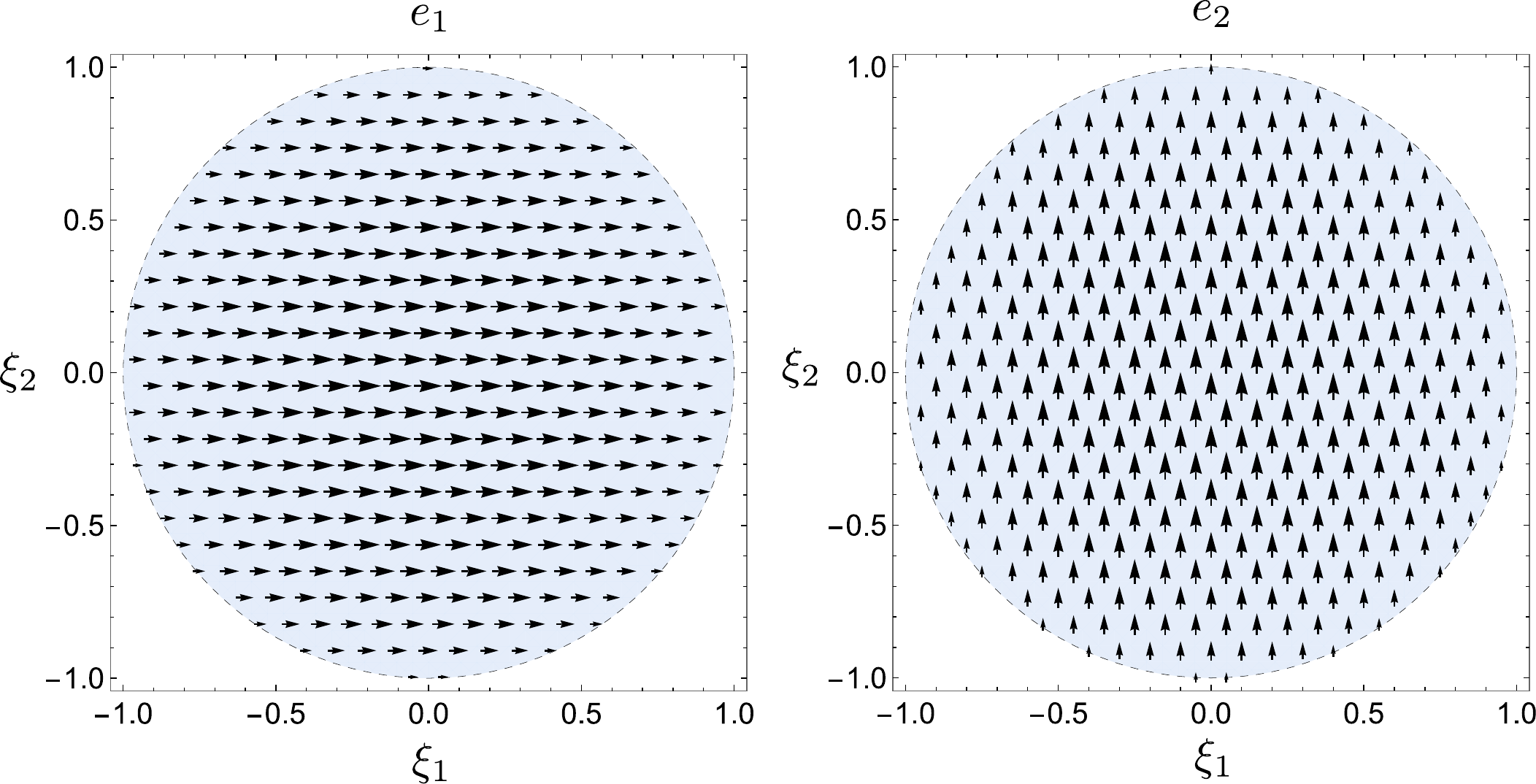}
			\caption{\normalfont An illustration of the vector fields $e_{i}$ for Example \ref{exmp:sphere_exmp_no_deception} in the $\xi$-coordinates. The arrows' sizes represent the magnitude of the vector field.}
			\label{fig:sphere_example_vectorfields}
		\end{figure}
		We consider a two-player decision-making problem on the sphere $\mathcal{M}:=\mathbb{S}^2=\{x\in\mathbb{R}^3~|~x^\top x = 1\}\subset\mathbb{R}^3$, which is a $2$-dimensional Euclidean sub-manifold.
		Let the vector fields $e_{i}$ be defined in the ambient coordinates $x=(x_1,x_2,x_3)$ by $e_i(x):=x_3 \partial_{i} - x_i\partial_3$,
		which can be shown to be tangent to $\mathbb{S}^2$.
		To verify Assumption \ref{asmp:indpenendent_families} on a suitable submanifold $\mathcal{U}\subset\mathcal{M}$, we define the function
		$
		\Delta(x):=\langle x, e_{1}(x)\times e_{2}(x) \rangle_{\mathbb{R}^3},
		$
		where $\times$ is the cross product of vectors and $\langle\cdot,\cdot\rangle_{\mathbb{R}^3}$ is the inner product between vectors in $\mathbb{R}^3$. Direct computation gives that $\Delta(x) = x_3(x_1^2+x_2^2+x_3^2) = x_3$.
		Therefore, since the cross product of a pair of vectors in $\mathbb{R}^3$ vanishes if and only if they are linearly dependent, it follows that the vector fields $e_{i}$ are linearly independent and span the tangent space of $\mathbb{S}^2$ everywhere except at the degenerate set $\{x\in\mathbb{S}^2~|~x_3=0\}$. In particular, by restricting our analysis to the open sub-manifold
		$
		\mathcal{U} := \{x\in\mathbb{S}^2~|~x_3>0\},
		$
		we are guaranteed that, if $\mathcal E_i(x):=\mathrm{span}\{e_i(x)\}$, then Assumption \ref{asmp:indpenendent_families} holds on $\mathcal{U}$.
		Moreover, the submanifold $\mathcal{U}$ is diffeomorphic to the open disk $\mathbb{D}^2:=\{\xi=(\xi_1,\xi_2)\in\mathbb{R}^2~|~\xi_1^2+\xi_2^2<1\}$ via the diffeomorphism $(\xi_1,\xi_2)= \varphi(x)= (x_1,x_2)$, which will be convenient for visualizations.
		For instance, the vector fields $e_{i}$ are depicted in Figure \ref{fig:sphere_example_vectorfields} using the $\xi$-coordinates. It is also worth mentioning that the vector fields $e_{i}$ do not commute. Indeed, direct computation gives $[e_{1},e_{2}](x) = x_2\partial_1 - x_1\partial_2$, which, generically, is non-vanishing on $\mathcal{U}$.
		\phantom{.}\hfill $\Box$
	\end{exmp}
	\vspace{0.1cm}
	\begin{exmp}\label{exmp:simplex_exmp_no_deception}
		Consider a two-player  resource allocation problem on the $2$-simplex $\mathcal{M}=\mathcal{S}^2:=\{x\in\mathbb{R}^3~|~x_i\geq 0, \sum_{i=1}^3x_i=1\}$, which is a 2-dimensional Euclidean submanifold. Consider the vector fields $e_i$ defined in the ambient coordinates as follows: $e_i(x):=x_{1}x_{2}x_{3}(\partial_{i}-\partial_3)$. Direct computation shows that the vector fields $e_i$ are tangent to $\mathcal{M}$ and span the tangent space everywhere except at the degenerate set $\{x\in\mathcal{M}~|~ x_1x_2x_3=0\}$, which corresponds to the topological boundary of $\mathcal M$, denoted by $\partial \mathcal{M}$. Hence, by restricting our analysis to the open submanifold
		$
		\mathcal{U}:= \mathcal{M}\backslash\partial\mathcal{M},
		$
		we are guaranteed that, with $\mathcal{E}_i=\mathrm{span}\{e_i\}$, then Assumption \ref{asmp:indpenendent_families} holds on $\mathcal{U}$. Moreover, similar to the previous example, the submanifold $\mathcal{U}$ is diffeomorphic to an open subset of the plane via the diffeomorphism $(\xi_1,\xi_2)= \varphi(x)= (x_1,x_2)$, which will be convenient for visualizations. Indeed, the vector fields $e_i$ are depicted in Figure \ref{fig:P2_exmp_vectorfields}.
		\begin{figure}[t]
			\centering
			\includegraphics[width=0.8\linewidth]{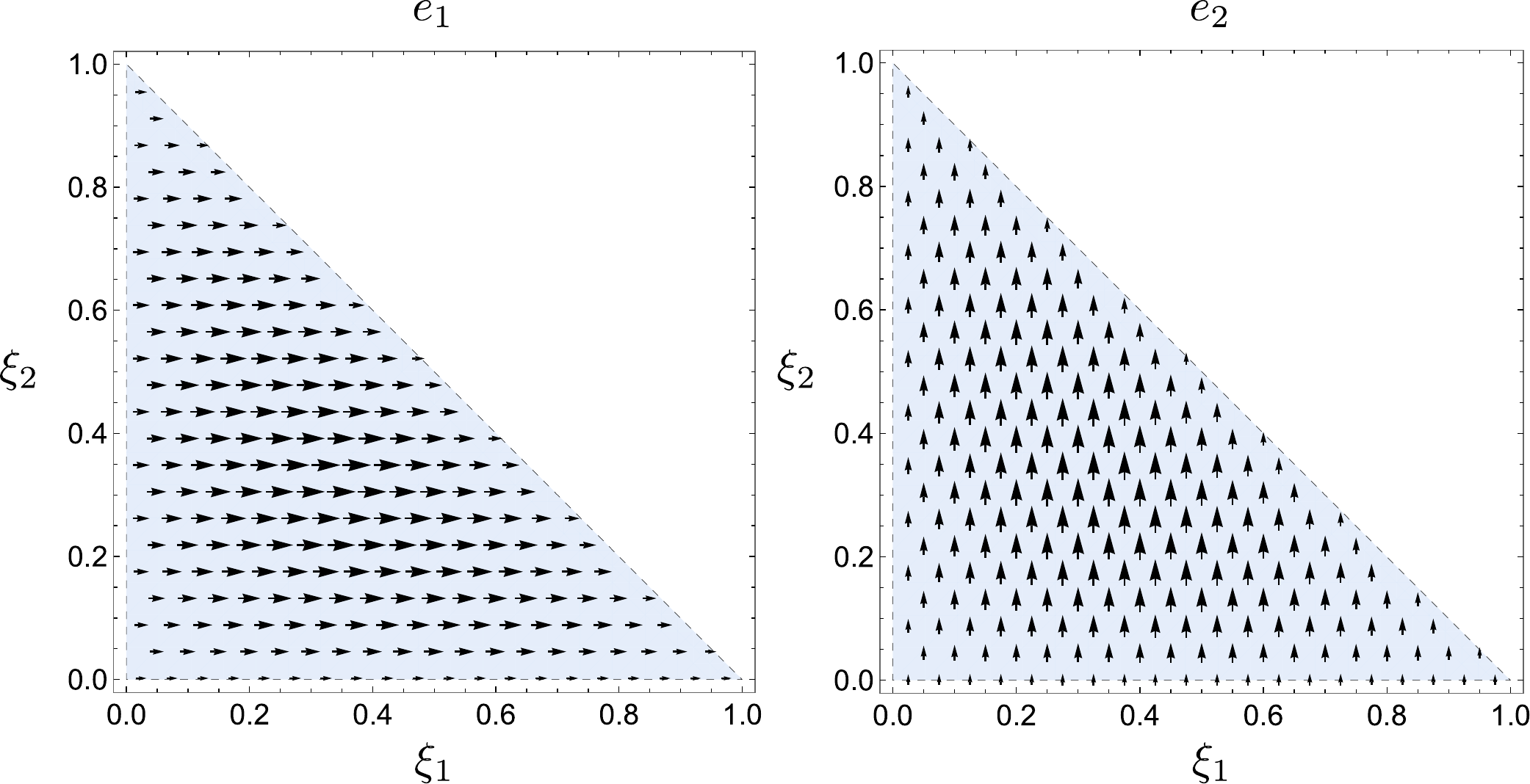}
			\caption{\normalfont An illustration of the vector fields $e_{i}$ for Example \ref{exmp:simplex_exmp_no_deception} in the $\xi$-coordinates. The arrows' sizes represent the magnitude of the vector field.}
			\label{fig:P2_exmp_vectorfields}
		\end{figure}
		We also point out that the vector fields $e_i$ do not commute. In fact, direct computation gives
		\begin{align*}
			[e_1,e_2](x) &= x_1x_2x_3(x_1(x_3-x_2)\partial_1+x_2(x_1-x_3)\partial_2\\
			&+x_3(x_2-x_1)\partial_3),
		\end{align*}
		which, generically, is non-vanishing on $\mathcal{U}$. \hfill$\Box$
	\end{exmp}

	\vspace{0.1cm}
	We will be mainly interested in the behavior of the dynamical system \eqref{eq:liebra_system} within the open sub-manifold $\mathcal{U}$. As such, Assumption \ref{asmp:indpenendent_families} allows us to endow system \eqref{eq:liebra_system} with a natural geometric interpretation. In particular, our first result shows that the flow of \eqref{eq:liebra_system} is actually a \emph{Riemannian pseudo-gradient flow on $\mathcal{M}$.} 

	\vspace{0.1cm}
	\begin{thm}\label{thm:pseduo_grad}
		Let Assumption \ref{asmp:indpenendent_families} be satisfied. Then, there exists a Riemannian metric $g$ on the sub-manifold $\mathcal{U}\subset\mathcal{M}$ such that system \eqref{eq:liebra_system} takes the form
		\begin{align}\label{eq:projected_grad_form}
			\dot{x}&= \sum_{i=1}^N \kappa_{i} \grad_{\mathcal{E}_i}(J_i)(x),
		\end{align}
		where $\grad_{\mathcal{E}_i}(J_i)$ is the vector field obtained through orthogonal projection of $\grad(J_i)$ with respect to $g$ onto $\mathcal{E}_i$. \hfill $\Box$
	\end{thm}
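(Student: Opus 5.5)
The plan is to build $g$ so that each family $\{f_{i,j}\}$ behaves like an orthonormal frame for $\mathcal{E}_i$, with the distributions $\mathcal{E}_i$ mutually $g$-orthogonal. The main difficulty is that the $f_{i,j}$ need not be linearly independent: Assumption~\ref{asmp:indpenendent_families} only says they span a regular distribution $\mathcal{E}_i$ of constant rank $r_i\le d_i$. So the metric on each $\mathcal{E}_i$ must be defined through the frame operator rather than by declaring the $f_{i,j}$ orthonormal.

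For each $i$ and $x\in\mathcal{U}$, define the linear map
\[
F_i(x):\mathbb{R}^{d_i}\to\mathcal{E}_i(x),\qquad F_i(x)u=\sum_{j=1}^{d_i}u_jf_{i,j}(x).
\]
It is surjective, and it depends smoothly on $x$. Let
\[
G_i(x):=F_i(x)F_i(x)^\top:\mathcal{E}_i(x)^*\to\mathcal{E}_i(x),\qquad G_i(x)=\sum_{j=1}^{d_i}f_{i,j}(x)\otimes f_{i,j}(x),
\]
where the transpose is taken with respect to the Euclidean structure on $\mathbb{R}^{d_i}$. Then $G_i(x)$ is a symmetric, positive definite element of $\mathcal{E}_i(x)\otimes\mathcal{E}_i(x)$, because $F_i$ is onto. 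It therefore defines a co-metric on $\mathcal{E}_i$, and its inverse $g_i:=G_i^{-1}$ is an inner product on $\mathcal{E}_i$.

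Smoothness of $g_i$ in $x$ is the one step that needs care. Regularity of $\mathcal{E}_i$ gives, near any point, a local smooth frame of $\mathcal{E}_i$ consisting of $r_i$ of the $f_{i,j}$. In that frame $G_i$ is a smooth invertible $r_i\times r_i$ matrix, so its inverse is smooth. Invertibility comes from the fact that $G_i$ contains, as a positive semidefinite summand, the Gram-type sum over the chosen frame, and that summand is already positive definite. Hence $g_i$ is a smooth bundle metric on $\mathcal{E}_i$.

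Next use the splitting~\eqref{eq:tangent_splitting}. Write $\pi_i:T\mathcal{U}\to\mathcal{E}_i$ for the smooth projections associated with the direct sum; they are smooth because the $\mathcal{E}_i$ are smooth subbundles whose sum is direct and spans. Define
\[
g(X,Y):=\sum_{i=1}^N g_i(\pi_iX,\pi_iY).
\]
This $g$ is symmetric and positive definite, and the $\mathcal{E}_i$ are mutually $g$-orthogonal. Consequently the $g$-orthogonal projection onto $\mathcal{E}_i$ is $P_{\mathcal{E}_i}=\pi_i$.

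Now compute the projected gradient. For $Y\in\mathcal{E}_i$ we have
\[
g\bigl(P_{\mathcal{E}_i}\grad(J_i),Y\bigr)=g\bigl(\grad(J_i),Y\bigr)=dJ_i[Y].
\]
Set $V:=\sum_j L_{f_{i,j}}(J_i)\,f_{i,j}$, which equals $G_i\bigl(dJ_i|_{\mathcal{E}_i}\bigr)$ by the definition of $G_i$. Since $g_i=G_i^{-1}$, for $Y\in\mathcal{E}_i$,
\[
g(V,Y)=g_i\bigl(G_i\,dJ_i|_{\mathcal{E}_i},Y\bigr)=dJ_i[Y].
\]
The vector $V$ lies in $\mathcal{E}_i$, and $g$ restricted to $\mathcal{E}_i$ is nondegenerate. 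Hence $V=\grad_{\mathcal{E}_i}(J_i)$.

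Summing over $i$ with the weights $\kappa_i$ turns~\eqref{eq:liebra_system} into~\eqref{eq:projected_grad_form}. In the special case where the union of all families is a frame, $g$ is simply the metric that makes $\{f_{i,j}\}$ orthonormal.
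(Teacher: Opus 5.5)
Your proposal is correct and is essentially the paper's proof. The paper defines the co-metric $g^{-1}(\alpha,\beta)=\sum_i\sum_j \alpha_i(f_{i,j})\,\beta_i(f_{i,j})$ block-diagonally with respect to the annihilator splitting $T_x^\star\mathcal{U}=\bigoplus_i \mathcal{E}_i(x)^\circ$; this is exactly the inverse of your metric with blocks $g_i=G_i^{-1}$, and the paper then reads off $\grad_{\mathcal{E}_i}(J_i)=\sum_j L_{f_{i,j}}(J_i)\,f_{i,j}$ by the same pairing argument. The only difference is that working on the cotangent side means the paper never inverts $G_i$ on the subbundle, so it needs no local-frame argument for smoothness: the inverse of a smooth co-metric is automatically smooth.
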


	\vspace{0.1cm}
	Since system \eqref{eq:projected_grad_form} generalizes the standard 
	gradient-play dynamics \cite{li2025passivity,shamma2005dynamic,ratliff2016characterization,mazumdar2020gradient}, we refer to \eqref{eq:liebra_system} as a \emph{non-holonomic gradient play}: each agent ascends its own payoff, but only along the state-dependent, and generically non-commuting, directions available to it. In the special case $\Man=\Real^n$ with $f_{i,j}=\partial_{(i,j)}$ in the standard coordinates, system \eqref{eq:projected_grad_form} reduces to the classical gradient play dynamics studied in \cite{ratliff2016characterization,mazumdar2020gradient}.

	\vspace{0.1cm}
	\begin{rem}
		The proof of Theorem \ref{thm:pseduo_grad} is constructive in the sense that it explicitly defines the Riemannian metric on the sub-manifold $\mathcal{U}$ for which the conclusions of the Theorem hold. Moreover, if the sub-manifold $\mathcal{U}$ contains any equilibria of the system \eqref{eq:liebra_system}, then, thanks to Assumption \ref{asmp:indpenendent_families}, the condition \eqref{eq:eqbm_coupled} simplifies to the natural geometric condition
		\begin{align}\label{eq:eqbm_uncoupled}
			\grad_{\mathcal{E}_i}(J_i)\big|_{x^\star} &= 0, & i&\in\{1,\dots,N\}.
		\end{align}
		%
		%
		Note that, unlike \eqref{eq:eqbm_coupled}, the condition \eqref{eq:eqbm_uncoupled} is independent of the gains $\kappa_{i}$, i.e., no agent can change the equilibria of \eqref{eq:liebra_system} in the sub-manifold $\mathcal{U}$ by amplifying their gain $\kappa_{i}$, which implies that such equilibria are solely determined by the payoffs $J_i$ and the family of vector fields $\cup_{i=1}^N\{f_{i,j}\}_{j=1}^{d_i}$,
		i.e., by the properties of the environment (cf. Figure \ref{fig:env_model}). Therefore, in the sequel we may take $\kappa_i=1$ for all $i$, which is equivalent to absorbing the constants $\kappa_i$ into the payoff functions, without affecting the equilibria of \eqref{eq:liebra_system} in the sub-manifold $\mathcal{U}$. \hfill $\square$
	\end{rem}

	\vspace{-0.2cm}
	\subsection{Stability via Intrinsic Jacobian Characterization}
	We now proceed to study the stability properties of the equilibria of \eqref{eq:liebra_system}. Specifically, we would like to characterize sufficient conditions under which an equilibrium point, i.e., a point satisfying \eqref{eq:eqbm_uncoupled}, is \emph{isolated} and locally exponentially stable. This is achieved using the \emph{intrinsic Jacobian}.
	\subsubsection{Intrinsic Jacobian and Exponential Stability}
	The intrinsic Jacobian of the vector field defining \eqref{eq:liebra_system} is the $(1,1)$-tensor $A:\mathfrak{X}(\mathcal{U})\rightarrow\mathfrak{X}(\mathcal{U})$ defined by
	\begin{align}\label{eq:linearization}
		A[X]:= \nabla_X\left(\sum_{i=1}^N \grad_{\mathcal{E}_i}(J_i)\right),
	\end{align}
	for any $X\in \mathfrak{X}(\mathcal{U})$, where we utilize the linearity of the Levi-Civita connection.
	The local properties of  $x^\star\in\mathcal{U}$ satisfying \eqref{eq:eqbm_uncoupled} are dictated by the \emph{eigenvalues} of the linear map $A|_{x^\star}:T_{x^\star}\mathcal{U}\ni v \rightarrow A|_{x^\star}[v]\in T_{x^\star}\mathcal{U}$, i.e. the restriction of the Jacobian defined in \eqref{eq:linearization} to the tangent space at the point $x^\star$. We formalize this fact in the following proposition, whose proof is classical and therefore omitted.

	\vspace{0.1cm}
	\begin{prop}\label{prop:linearization_stability}
		Let Assumption \ref{asmp:indpenendent_families} be satisfied for some $\mathcal{U}\subset\mathcal{M}$ and let $x^\star\in\mathcal{U}$ be such that \eqref{eq:eqbm_uncoupled} holds. Then, the following statements are true:
		\begin{enumerate}
			\item The point $x^\star$ is an isolated equilibrium for system \eqref{eq:liebra_system} if the map $A|_{x^\star}$ is an automorphism of $T_{x^\star}\mathcal{U}$.
			\item The point $x^\star$ is locally exponentially stable for system \eqref{eq:liebra_system} if and only if the map $A|_{x^\star}$ is Hurwitz.  \hfill $\Box$
		\end{enumerate}
	\end{prop}

	\vspace{0.1cm}
	\begin{rem}\label{remNash}
		Condition \eqref{eq:eqbm_uncoupled} resembles the first-order stationarity conditions for Nash equilibria. However, at this stage, its solutions cannot, in general, be identified as Nash equilibria. Indeed, \eqref{eq:eqbm_uncoupled} only guarantees stationarity of each payoff along the corresponding actuation distribution, while the Hurwitz property of \eqref{eq:linearization} characterizes the local exponential stability of the coupled gradient-play dynamics rather than player-wise optimality. A Nash interpretation requires additional integrability assumptions on the distributions, together with suitable player-wise second-order conditions. \hfill $\square$
	\end{rem}
	
	\vspace{0.1cm}
	\begin{exmp}[Identical Interest Games]\label{exampleidenticalgame}
		Consider the case where
		$J_1=J_2=\cdots=J_N = J,$
		i.e., the \emph{identical interest} game in which all agents share the same payoff \cite{monderer1996fictitious}. In that case, system \eqref{eq:liebra_system} is, essentially, an intrinsic gradient flow\footnote{An intrinsic gradient flow is a gradient-based dynamical system defined directly on a manifold using its geometric structure, without relying on coordinates or an embedding into a Euclidean space.}. In addition, the equilibrium condition \eqref{eq:eqbm_uncoupled} and the operator \eqref{eq:linearization} simplify to
		\begin{align*}
			\grad(J)|_{x^\star} &= 0, & A[X] &= \nabla_X \grad(J),
		\end{align*}
		where, since $\{P_{\mathcal{E}_i}\}_{i=1}^N$ are a complete set of projection operators, we used the property that $\sum_{i=1}^N P_{\mathcal{E}_i} = I$. Applying Proposition \ref{prop:linearization_stability} to this special case, we recover the standard conditions of optimization in the geometric setting. In particular, item 2) in Proposition \ref{prop:linearization_stability} is equivalent to the condition that the Hessian is negative definite at $x^\star$. \hfill $\square$
	\end{exmp}
	Given the discussion in Example \ref{exampleidenticalgame}, it is tempting to extend the intuition provided by the identical interest case to the general case. However, as will be shown in the sequel, the curvature induced by the geometry of the bundles $\mathcal{E}_i$ contributes in a non-trivial way to the Jacobian \eqref{eq:linearization}. Indeed, recalling that $\grad_{\mathcal{E}_i}(J_i):=P_{\mathcal{E}_i}[\grad(J_i)]$, where the $(1,1)$-tensor $P_{\mathcal{E}_i}$ is the orthogonal projection operator with respect to the metric onto the distribution $\mathcal{E}_i$, and utilizing the product rule of the Levi-Civita connection, we obtain
	\begin{equation}\label{eq:pseudo-hessian}
		\begin{aligned}
			A = \sum_{i=1}^N P_{\mathcal{E}_i}[\nabla \grad(J_i)]+ (\nabla P_{\mathcal{E}_i}) [\grad(J_i)].
		\end{aligned}
	\end{equation}
	From \eqref{eq:pseudo-hessian}, it is immediately clear that the curvature of the bundles $\mathcal{E}_i$ has a non-negligible contribution to the Jacobian. Indeed, the second term in \eqref{eq:pseudo-hessian} does not vanish generically at an equilibrium point $x^\star$ since condition \eqref{eq:eqbm_uncoupled} may hold at a point $x^\star\in\mathcal{U}$ even if
	$\grad(J_i)|_{x^\star}\neq 0.$
	However, using the orthogonal splitting \eqref{eq:tangent_splitting}, we may decompose $A$ into a \emph{block} form by defining the linear maps
	\begin{align*}
		A_{ij}[\cdot]=\sum_{k=1}^N P_{\mathcal{E}_i} \Big[P_{\mathcal{E}_k} &[\nabla_{P_{\mathcal{E}_j}[\cdot]}\grad(J_k)] \\
		&+ (\nabla_{P_{\mathcal{E}_j}[\cdot]} P_{\mathcal{E}_k})[\grad(J_k)]\Big].
	\end{align*}
	Since $\mathcal{E}_{i}$ are orthogonal, it follows that $P_{\mathcal{E}_i}P_{\mathcal{E}_k} = \delta_{ik} P_{\mathcal{E}_k}$, and, therefore, the tensor fields $A_{ij}$ simplify to
	\begin{subequations}\label{eq:jacobian_block_form}
		\begin{align}\label{eq:jacobian_block_form_A}
			A_{ij}[\cdot]&=P_{\mathcal{E}_i}[\nabla_{P_{\mathcal{E}_j}[\cdot]}\grad(J_i)] + C_{ij}[\cdot],
		\end{align}
	\end{subequations}
	where $C_{ij}$ are the tensor fields defined by $C_{ij}[\cdot]:= \sum_{k=1}^N P_{\mathcal{E}_i}[(\nabla_{P_{\mathcal{E}_j}[\cdot]} P_{\mathcal{E}_k})[\grad(J_k)]]$. From this block form, it is possible to derive a sufficient condition to ensure that the real parts of the eigenvalues of the linear map $A|_{x^\star}$ are strictly negative and, therefore, that $x^\star$ is both isolated and locally exponentially stable (via Proposition \ref{prop:linearization_stability}). The following result formalizes this:
	\begin{prop}
		\label{prop:weighted-block-gershgorin-riem}
		Let $x^\star\in\Dom$ satisfy the first-order condition \eqref{eq:eqbm_uncoupled} and let $A_{ij}$ be the blocks defined in \eqref{eq:jacobian_block_form}. For each pair \((i,j)\), define \(\|A_{ij}\|_{g}\) as the operator norm induced by \(g\), i.e.,
		\[
		\|A_{ij}\|_{g}
		:=
		\sup_{v\in \mathcal E_j(x^\star)\setminus\{0\}}
		\frac{\|A_{ij}v\|_{g}}{\|v\|_{g}},
		\]
		and, for a linear map $M$ on $\mathcal{E}_i(x^\star)$, let $\sigma_{\min}^g(M):=\inf_{v\in \mathcal E_i(x^\star)\setminus\{0\}}\|Mv\|_{g}/\|v\|_{g}$. Suppose that there exist positive constants \(\chi_i>0\) (\(i=1,\dots,N\)) such that, for each $i$,
		\begin{equation}
			\label{eq:wbg-riem}
			\inf_{\Re(s)\ge 0}\sigma_{\min}^g(A_{ii}|_{x^\star}-sI_i)
			\;>\;
			\sum_{j\neq i}\|A_{ij}|_{x^\star}\|_{g}\,\frac{\chi_j}{\chi_i},
		\end{equation}
		where $I_i$ denotes the identity map on $\mathcal{E}_i(x^\star)$. Then \(A|_{x^\star}\) is an automorphism and Hurwitz. \hfill $\Box$
	\end{prop}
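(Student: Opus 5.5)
Suppose, to get a contradiction, that $A|_{x^\star}$ has an eigenvalue $s$ with $\Re(s)\ge 0$. This covers both conclusions at once: $s=0$ is excluded, so $A|_{x^\star}$ is invertible, hence an automorphism of $T_{x^\star}\Dom$. And every eigenvalue then has negative real part, so $A|_{x^\star}$ is Hurwitz. The argument is the classical weighted block Gershgorin (Feingold--Varga) argument, carried out in the $g$-orthogonal splitting \eqref{eq:tangent_splitting}.

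The first step is to set things up on the complexification. Let $v\in T_{x^\star}\Dom\otimes\mathbb{C}$ be a nonzero eigenvector with $A|_{x^\star}v=sv$. Extend $g$ to a Hermitian inner product; the operator norms and $\sigma^g_{\min}$ are unchanged under complexification, because for real operators the complexified operator norm and minimal singular value coincide with the real ones.

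Next, decompose $v=\sum_j v_j$ with $v_j:=P_{\mathcal{E}_j}v\in\mathcal{E}_j(x^\star)\otimes\mathbb{C}$. By construction of the blocks $A_{ij}$, and since the projections are complete ($\sum_j P_{\mathcal{E}_j}=I$) and mutually orthogonal, we have $P_{\mathcal{E}_i}A|_{x^\star}v=\sum_j A_{ij}|_{x^\star}v_j$. The eigen-equation therefore splits into the block equations
\begin{equation*}
(A_{ii}|_{x^\star}-sI_i)v_i=-\sum_{j\neq i}A_{ij}|_{x^\star}v_j,\qquad i=1,\dots,N.
\end{equation*}

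Now choose the index $i$ that maximizes $\|v_j\|_g/\chi_j$ over $j$. Since $v\neq0$, we have $\|v_i\|_g>0$. Take norms in the $i$-th block equation. The left side is bounded below by $\sigma^g_{\min}(A_{ii}|_{x^\star}-sI_i)\|v_i\|_g$, and this is at least $\inf_{\Re(s')\ge0}\sigma^g_{\min}(A_{ii}|_{x^\star}-s'I_i)\,\|v_i\|_g$. The right side is bounded above by $\sum_{j\ne i}\|A_{ij}|_{x^\star}\|_g\|v_j\|_g$, and since $\|v_j\|_g\le\|v_i\|_g\,\chi_j/\chi_i$, this is at most $\|v_i\|_g\sum_{j\neq i}\|A_{ij}|_{x^\star}\|_g\,\chi_j/\chi_i$. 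Dividing by $\|v_i\|_g$ contradicts \eqref{eq:wbg-riem}.

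The only delicate points are bookkeeping. One must check that the block identity holds exactly, which requires that the $C_{ij}$ terms are included precisely as in \eqref{eq:jacobian_block_form}. One must also handle the complexification correctly. Beyond that, the contradiction argument is routine.
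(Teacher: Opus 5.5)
Your argument is correct and is essentially the paper's proof. The paper conjugates by $X=\mathrm{diag}(\chi_1 I_1,\dots,\chi_N I_N)$ to rescale the off-diagonal blocks by $\chi_j/\chi_i$ and then cites the Feingold--Varga block Gershgorin theorem. You instead inline that theorem's eigenvector proof and absorb the weights into the choice of the index maximizing $\|v_j\|_g/\chi_j$.

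One small correction: for non-real $s$ the map $A_{ii}|_{x^\star}-sI_i$ is not a real operator, so its $\sigma^g_{\min}$ in the hypothesis must be read on the complexification, as the paper's proof also tacitly does. It cannot be claimed ``unchanged'' from a real quantity. If only real test vectors were allowed, the hypothesis would hold for the $2\times 2$ rotation-generator block $\left(\begin{smallmatrix}0&-1\\1&0\end{smallmatrix}\right)$ with no coupling, and the conclusion would fail.
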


	\vspace{0.1cm}
	\subsubsection{Jacobian Structure of Nonholonomic Gradient Play} In the remainder, we impose the following assumption:

	\vspace{0.1cm}
	\begin{asmp}\label{asmp:independent_vectorfields}
		For each $i \in \{1, \ldots, N\}$, the family $\{f_{i,j}\}_{j=1}^{d_i}$ is linearly independent at every $x \in \Dom$.\hfill $\Box$
	\end{asmp}
	\vspace{0.1cm}
	\begin{rem}
		Under Assumption \ref{asmp:independent_vectorfields}, the Riemannian metric $g$ on $\Dom$ constructed in the proof of Theorem \ref{thm:pseduo_grad} makes the family $\{f_{i,j} : 1 \le i \le N,\; 1 \le j \le d_i\}$ a global orthonormal frame on $\Dom$ under the metric $g$. Note, however, that this Assumption is not too restrictive and is only made for convenience. Indeed, by restricting the submanifold $\mathcal{U}\subset\mathcal{M}$ if needed, we may always select a set of independent vector fields that span each distribution $\mathcal{E}_i$.\hfill $\square$
	\end{rem}
	Next, we proceed to define $\{\vartheta^{i,j}\}$ as the co-frame dual to $\{f_{i,j} : 1 \le i \le N,\; 1 \le j \le d_i\}$, i.e., the unique set of differential $1$-forms on $\mathcal{U}$ that satisfy
	\begin{equation}
		\vartheta^{i,j}[f_{k,\ell}] = \delta_{ik}\,\delta_{j\ell}. \label{eq:dual}
	\end{equation}
	With this definition, the metric and the orthogonal projection operators onto the distributions $\Ei_i$ take the form
	\begin{align}\label{eq:metric-proj}
		g(v,w) &= \sum_{i,j} \vartheta^{i,j}[v]\, \vartheta^{i,j}[w], &
		P_{\Ei_i} &= \sum_{j=1}^{d_i} \vartheta^{i,j} \otimes f_{i,j}.
	\end{align}
	In addition, for any smooth function $J$,
	\begin{align}\label{eq:grad}
		\grad(J) &= \sum_{i=1}^N \grad_{\Ei_i}(J), &
		\grad_{\Ei_i}(J) &= \sum_{j=1}^{d_i} dJ[f_{i,j}]\, f_{i,j}.
	\end{align}

	\vspace{-0.5cm}
	We then have the following Theorem.
	\medskip
	\begin{thm}
		\label{thm:jacobian_simplification}
		Suppose that Assumptions \ref{asmp:indpenendent_families}-\ref{asmp:independent_vectorfields} hold. With respect to the frame $\{f_{i,j}\}$, the linear map $A|_{x^\star}$ admits the block matrix representation $A|_{x^\star} = \big[\,A_{ij}|_{x^\star}\,\big]_{i,j=1}^{N}$, $A_{ij}|_{x^\star} \in \Real^{d_i \times d_j}$, where the entries of the matrices $A_{ij}|_{x^\star}$ are given by
		\begin{equation}
			\label{eq:block-entries}
			\begin{aligned}
				\big(A_{ij}&|_{x^\star}\big)_{p,q} = d\big(dJ_i[f_{i,p}]\big)[f_{j,q}]\Big|_{x^\star}\\
				&= \hess(J_i)\big[f_{i,p}, f_{j,q}\big]\Big|_{x^\star} + dJ_i\big[\nabla_{f_{j,q}} f_{i,p}\big]\Big|_{x^\star},
			\end{aligned}
		\end{equation}
		for $p \in \{1,\ldots,d_i\}$ and $q \in \{1,\ldots,d_j\}$, and where $\hess(J_i)$ denotes the Hessian of $J_i$ with respect to $g$. \hfill $\Box$
	\end{thm}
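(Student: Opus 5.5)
The plan is to compute the matrix of $A|_{x^\star}$ in the $g$-orthonormal frame $\{f_{i,j}\}$ directly from the frame expression \eqref{eq:grad}, rather than from the block decomposition \eqref{eq:jacobian_block_form}. With $\kappa_i=1$, the vector field is
\[
F=\sum_{i=1}^N \grad_{\Ei_i}(J_i)=\sum_{i=1}^N\sum_{p=1}^{d_i}\phi_{i,p}\,f_{i,p},
\qquad \phi_{i,p}:=dJ_i[f_{i,p}] .
\]
By \eqref{eq:eqbm_uncoupled} and \eqref{eq:grad}, $x^\star$ satisfies $\phi_{i,p}(x^\star)=0$ for all $i,p$. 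Applying the Leibniz rule for $\nabla$ gives
\[
\nabla_X F=\sum_{i,p}\big(d\phi_{i,p}[X]\,f_{i,p}+\phi_{i,p}\nabla_X f_{i,p}\big).
\]
At $x^\star$ the second sum vanishes. This is the key observation: the connection terms $\nabla f_{i,p}$, which enter $C_{ij}$, are multiplied by the vanishing coefficients $\phi_{i,p}$. Hence $A|_{x^\star}[X]=\sum_{i,p}d\phi_{i,p}[X]\,f_{i,p}$ at $x^\star$.

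To read off matrix entries, take $X=f_{j,q}$ and pair with the dual coframe \eqref{eq:dual}. This gives
\[
\vartheta^{i,p}\big[A|_{x^\star}f_{j,q}\big]=d\big(dJ_i[f_{i,p}]\big)[f_{j,q}]\big|_{x^\star},
\]
which is the first equality in \eqref{eq:block-entries}. The $(i,j)$ block then consists of exactly these entries, because $P_{\Ei_i}=\sum_p\vartheta^{i,p}\otimes f_{i,p}$ by \eqref{eq:metric-proj} and $P_{\Ei_j}f_{j,q}=f_{j,q}$. As a sanity check, this is consistent with \eqref{eq:jacobian_block_form}, since $A_{ij}=P_{\Ei_i}AP_{\Ei_j}$.

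For the second equality, I will write $dJ_i[f_{i,p}]=g(\grad(J_i),f_{i,p})$ and differentiate along $f_{j,q}$ using metric compatibility of the Levi-Civita connection:
\[
f_{j,q}\big(g(\grad J_i,f_{i,p})\big)=g(\nabla_{f_{j,q}}\grad J_i,f_{i,p})+g(\grad J_i,\nabla_{f_{j,q}}f_{i,p}).
\]
The first term is $\hess(J_i)[f_{j,q},f_{i,p}]$, which equals $\hess(J_i)[f_{i,p},f_{j,q}]$ by symmetry. The second term is $dJ_i[\nabla_{f_{j,q}}f_{i,p}]$. Both identities hold pointwise everywhere and in particular at $x^\star$.

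The only subtle point is the first step. One has to make sure that the full Jacobian $A$ defined in \eqref{eq:linearization} (with $\kappa_i=1$) equals $\nabla F$ and not something involving $\grad(J_i)$ in full, and that the vanishing of all $\phi_{i,p}$ at $x^\star$ really follows from \eqref{eq:eqbm_uncoupled}. The latter holds because, under Assumption~\ref{asmp:independent_vectorfields}, $\{f_{i,p}\}_p$ is a basis of $\Ei_i$, so $\grad_{\Ei_i}(J_i)=0$ forces each coefficient to vanish. Note also that the second term in \eqref{eq:block-entries} need not vanish, since $\grad(J_i)|_{x^\star}$ may be nonzero.
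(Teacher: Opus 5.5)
Your proof is correct, and it takes a more direct route than the paper's.

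The paper starts from the block decomposition \eqref{eq:jacobian_block_form}, $A_{ij}=P_{\Ei_i}[\nabla_{P_{\Ei_j}[\cdot]}\grad(J_i)]+C_{ij}$. It then:
\begin{enumerate}
\item expands $\nabla P_{\Ei_k}$ using $P_{\Ei_k}=\sum_\ell\vartheta^{k,\ell}\otimes f_{k,\ell}$;
\item computes $(\nabla_{f_{j,q}}\vartheta^{k,\ell})[\grad(J_k)]$ from the definition of the covariant derivative of a $1$-form;
\item shows that the terms $dJ_i[f_{k,\ell}]\,\vartheta^{i,p}[\nabla_{f_{j,q}}f_{k,\ell}]$ coming from the Hessian-type term cancel against part of $C_{ij}$.
\end{enumerate}
What survives is the identity \eqref{eq:Aij-full}, valid on all of $\Dom$. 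In it, the connection coefficients appear only multiplied by $dJ_k[f_{k,\ell}]$, and these vanish at $x^\star$.

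Your Leibniz expansion of $F=\sum_{k,\ell}\phi_{k,\ell}f_{k,\ell}$, paired with $\vartheta^{i,p}$, gives exactly the same identity,
$\vartheta^{i,p}[\nabla_{f_{j,q}}F]=d\phi_{i,p}[f_{j,q}]+\sum_{k,\ell}\phi_{k,\ell}\,\vartheta^{i,p}[\nabla_{f_{j,q}}f_{k,\ell}]$,
in one line. You never need $\nabla P_{\Ei_k}$ or $\nabla\vartheta^{k,\ell}$. Your version also makes it transparent that the nonholonomy of the frame enters only through coefficients that vanish at equilibrium.

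The paper's longer computation buys only an explicit reconciliation with the $C_{ij}$ decomposition introduced earlier, i.e., how the bundle-derivative terms combine with the Hessian-type term. That is not needed for the theorem itself.

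For the second equality, your use of metric compatibility plus symmetry of $\hess(J_i)$ is equivalent to the paper's identity $\hess(J_i)[X,Y]=d(dJ_i[Y])[X]-dJ_i[\nabla_XY]$.
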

	\vspace{0.2cm}
	\begin{rem}[Geometric Structure of the Linearization]\label{rem:connection_terms}
		Formula (20) decomposes each block of the Jacobian into a
		Hessian term and a connection term $dJ_i[\nabla_{f_{j,q}}f_{i,p}]$.
		The latter captures how variations in the actuation frame
		interact with the payoff gradient. Indeed, at equilibrium,
		each agent's payoff is stationary only along its own
		actuation directions, not necessarily along the entire
		tangent space. Thus, $\operatorname{grad}(J_i)|_{x^\star}$
		may be nonzero, allowing the connection term to persist
		even at equilibrium. Three special cases illustrate the role of the geometry.
		First, if the inter-block connection coefficients vanish,
		i.e., $\nabla_{f_{j,q}}f_{i,p}=0$ whenever $i\neq j$,
		then the off-diagonal blocks reduce to
		\[
		(A_{ij}|_{x^\star})_{p,q}
		=
		\operatorname{Hess}(J_i)
		[f_{i,p},f_{j,q}]|_{x^\star},
		\qquad i\neq j,
		\]
		while the diagonal blocks retain the connection terms
		associated with the intra-block geometry. 
		Conversely, if the intra-block connection coefficients
		vanish, i.e., $\nabla_{f_{i,q}}f_{i,p}=0$ for all $i$,
		then the diagonal blocks reduce to pure Hessian blocks,
		while the off-diagonal blocks retain the connection terms
		associated with the inter-block geometry.
		Finally, if the frame is holonomic, i.e.,
		$[f_{i,p},f_{j,q}]=0$ for all pairs, then it defines
		a coordinate frame in a suitable local chart, wherein
		the metric constructed in Theorem 1 is Euclidean.
		Consequently, all connection terms vanish, and (20)
		reduces to
		\[
		(A_{ij}|_{x^\star})_{p,q}
		=
		\partial^2_{(j,q),(i,p)}J_i|_{x^\star},
		\]
		recovering the characterization in [9].
		In general, however, the connection terms must be
		accounted for when analyzing the local stability of
		nonholonomic gradient-play dynamics. \hfill $\square$
	\end{rem}
	\subsubsection{Player-wise Involutivity of Control Distributions}
	We now consider additional assumptions on the family of vector fields and investigate the resulting simplifications on the geometric structures introduced earlier. We begin with the case in which each agent's control distribution is involutive.
	\begin{asmp}[Player-wise Involutivity]\label{asmp:involutive_distributions}
		Suppose that, for each $i\in\{1,\dots,N\}$, $[f_{i,p},f_{i,q}]\in \mathrm{span}\{f_{i,r}\}_{r=1}^{d_i}$, for all $p,q\in\{1,\dots,d_i\}$. \hfill $\square$
	\end{asmp}
	\begin{rem}
		In words, Assumption \ref{asmp:involutive_distributions} asks that each agent cannot directly interfere in the directions influenced by other agents by exciting Lie Brackets of its own vector fields, which is a natural assumption in practical applications. Note that Assumption \ref{asmp:involutive_distributions} holds trivially in Examples \ref{exmp:sphere_exmp_no_deception} and \ref{exmp:simplex_exmp_no_deception}, since each distribution therein is of rank one.  \hfill $\square$
	\end{rem}
	
	Under Assumptions \ref{asmp:indpenendent_families} and \ref{asmp:involutive_distributions}, each distribution $\mathcal{E}_i$ is integrable. More precisely, by the Frobenius Theorem \cite[Theorem 19.12]{lee2012smooth}, through every $x\in\mathcal{U}$ there passes a unique maximal connected integral manifold of $\mathcal{E}_i$, which we denote by $\mathcal{U}_i(x)$ and call the \emph{leaf} of $\mathcal{E}_i$ through $x$, satisfying
	\begin{equation}\label{leaf}
		T_y\,\mathcal{U}_i(x)=\mathcal{E}_i(y),
		\qquad
		y\in\mathcal{U}_i(x).
	\end{equation}
	Note, however, that the existence of these foliations is not enough to conclude that $\mathcal{U}$ admits a product structure whose factors are integral manifolds of the distributions $\mathcal{E}_i$. 
	Indeed, each distribution $\mathcal{E}_i$ can be $1$-dimensional, which is trivially involutive, and yet the manifold $\mathcal{U}$ does not admit a product structure that respects the splitting in Assumption \ref{asmp:indpenendent_families} due to the fact that vector fields belonging to different distributions may not commute. 
	Nevertheless, Assumption \ref{asmp:involutive_distributions} allows us to simplify the expressions in \eqref{eq:block-entries}. In particular, under Assumption \ref{asmp:involutive_distributions}, each leaf $\mathcal{U}_i(x)\subset\mathcal{U}$ is a \emph{weakly embedded} submanifold \cite[Theorem 19.17]{lee2012smooth} of the Riemannian manifold $\mathcal{U}$ and, when equipped with the ambient metric, is isometrically weakly embedded. Consequently, restrictions of smooth functions to $\mathcal{U}_i(x)$ are also smooth functions. Whenever a point $x^\star\in\mathcal{U}$ is fixed, we abbreviate $\mathcal{U}_i^\star:=\mathcal{U}_i(x^\star)$. Introducing the notation $J_i|_{\mathcal{U}_i^\star}$ to denote the restriction of the smooth function $J_i$ to the leaf $\mathcal{U}_i^\star$, as well as $g^{i}$, $\nabla^{i}$, to denote the Riemannian metric on $\mathcal{U}_i^\star$ induced by the ambient metric $g$, and its associated Levi-Civita connection, respectively, we have the following result.
	\begin{prop}\label{prop:involutive_dists}
		Suppose that Assumptions \ref{asmp:indpenendent_families}-\ref{asmp:involutive_distributions} hold, and that $x^\star\in\mathcal{U}$ satisfies \eqref{eq:eqbm_uncoupled}. Then, with respect to the frame $\{f_{i,j}\}$, the diagonal blocks in \eqref{eq:block-entries} are given by
		\begin{equation}\label{hessianexpression}
			(A_{ii}|_{x^\star})_{p,q} = \hess^{i}(J_i|_{\mathcal{U}_i^\star})[f_{i,p},{f_{i,q}}]|_{x^\star},
		\end{equation}
		where $\hess^{i}$ is the intrinsic Hessian of the restriction $J_i|_{\mathcal{U}_i^\star}$, computed with respect to the induced metric $g^{i}$. \hfill $\square$
	\end{prop}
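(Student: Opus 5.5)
We start from Theorem~\ref{thm:jacobian_simplification}, which gives for the diagonal blocks
\[
(A_{ii}|_{x^\star})_{p,q} = \hess(J_i)[f_{i,p},f_{i,q}]\big|_{x^\star} + dJ_i\big[\nabla_{f_{i,q}} f_{i,p}\big]\big|_{x^\star},
\]
and compare it with the intrinsic Hessian of the restriction $J_i|_{\mathcal{U}_i^\star}$ on the leaf. The tool is the Gauss formula for the (weakly) isometrically embedded leaf $\mathcal{U}_i^\star$. Under Assumption~\ref{asmp:involutive_distributions}, the vector fields $f_{i,p}$ are tangent to every leaf of $\mathcal{E}_i$, so their restrictions are smooth vector fields on $\mathcal{U}_i^\star$. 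For $X,Y$ tangent to the leaf we have $\nabla_X Y = \nabla^i_X Y + \mathrm{II}(X,Y)$, where $\nabla^i_X Y = P_{\mathcal{E}_i}\nabla_X Y$ and $\mathrm{II}(X,Y) = (I-P_{\mathcal{E}_i})\nabla_X Y$ is the second fundamental form.

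The computation proceeds in three steps.

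\emph{Step 1.} We avoid the ambient Hessian entirely by using the first expression in \eqref{eq:block-entries}:
\[
(A_{ii})_{p,q} = f_{i,q}\big(f_{i,p}(J_i)\big)\big|_{x^\star}.
\]
Since $f_{i,p}$ and $f_{i,q}$ are tangent to the leaf, this iterated derivative depends only on the restriction $\tilde J := J_i|_{\mathcal{U}_i^\star}$, so it equals $f_{i,q}(f_{i,p}\tilde J)$ computed intrinsically on the leaf.

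\emph{Step 2.} On the Riemannian manifold $(\mathcal{U}_i^\star,g^i)$ the intrinsic Hessian satisfies
\[
\hess^i(\tilde J)[X,Y] = X(Y\tilde J) - d\tilde J[\nabla^i_X Y].
\]
Taking $X=f_{i,q}$ and $Y=f_{i,p}$ gives
\[
(A_{ii})_{p,q} = \hess^i(\tilde J)[f_{i,q},f_{i,p}] + d\tilde J\big[\nabla^i_{f_{i,q}} f_{i,p}\big].
\]

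\emph{Step 3.} We show the last term vanishes at $x^\star$. The vector $\nabla^i_{f_{i,q}} f_{i,p}$ lies in $\mathcal{E}_i$, and
\[
d\tilde J[v] = dJ_i[v] = g(\grad(J_i),v) = g(\grad_{\mathcal{E}_i}(J_i),v) \quad \text{for } v\in\mathcal{E}_i.
\]
By the equilibrium condition \eqref{eq:eqbm_uncoupled}, $\grad_{\mathcal{E}_i}(J_i)|_{x^\star}=0$, so the term vanishes. Symmetry of $\hess^i$, which follows from the torsion-freeness of $\nabla^i$, then yields \eqref{hessianexpression} with the indices in the stated order. As a consistency check, the second-fundamental-form contribution $dJ_i[\mathrm{II}(f_{i,q},f_{i,p})]$ is exactly the part of the connection term in \eqref{eq:block-entries} that combines with the ambient Hessian. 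It accounts for the fact that $\grad(J_i)$ need not vanish at $x^\star$, and the Step 1 route sidesteps it entirely.

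The main obstacle is justifying that the computation can be carried out on the leaf, where the leaf is only weakly embedded. Specifically, one must check that $\tilde J$ and the restricted $f_{i,p}$ are smooth on $\mathcal{U}_i^\star$, and that $\nabla^i$, the Levi-Civita connection of the induced metric, coincides with $P_{\mathcal{E}_i}\nabla$. The first point follows from weak embeddedness (as noted before the proposition). The second is the standard Gauss formula, which is a local statement and therefore only needs an immersed submanifold with the induced metric. I would invoke it on a local slice chart of the foliation around $x^\star$, given by Frobenius.
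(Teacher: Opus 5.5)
Your proof is correct, but it takes a different route from the paper's. The paper also starts from the first equality in \eqref{eq:block-entries}, but then passes through the ambient Hessian. It writes $d(dJ_i[f_{i,p}])[f_{i,q}] = \hess(J_i)[f_{i,q},f_{i,p}] + dJ_i[\nabla_{f_{i,q}}f_{i,p}]$ and splits the connection term via the Gauss formula into a tangential part, killed by \eqref{eq:eqbm_uncoupled}, and a normal part $dJ_i[\Pi^i(f_{i,q},f_{i,p})]$. It then invokes the Gauss-type relation $\hess^i(J_i|_{\mathcal{U}_i^\star})[X,Y] = \hess(J_i)[X,Y] + dJ_i[\Pi^i(X,Y)]$ for $X,Y$ tangent to the leaf, so that the second-fundamental-form terms cancel.

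You never leave the leaf. Since $f_{i,p},f_{i,q}$ are tangent to $\mathcal{U}_i^\star$, the iterated derivative sees only $\tilde J$. The intrinsic formula for $\hess^i$ then leaves a single correction $d\tilde J[\nabla^i_{f_{i,q}}f_{i,p}]$, which vanishes at $x^\star$. This is just the fact that the Hessian at a critical point is connection-independent, the same fact the paper uses in proving Proposition~\ref{propLNE}. It also shows that the right-hand side of \eqref{hessianexpression} would take the same value for any metric on the leaf.

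Your Steps 1--3 never actually use $\nabla^i = P_{\mathcal{E}_i}\nabla$. That $\nabla^i_{f_{i,q}}f_{i,p}$ is tangent to the leaf is automatic, because $\nabla^i$ is a connection on $\mathcal{U}_i^\star$. So the Gauss formula is needed only for your consistency check. Your ``main obstacle'' then reduces to a standard fact: ambient smooth functions, and smooth vector fields tangent to the leaf, restrict smoothly to any immersed submanifold.

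What the paper's longer route buys is an explicit accounting of the connection term in the second equality of \eqref{eq:block-entries}. Its normal component is exactly what converts the ambient Hessian into the leaf Hessian, which ties the proposition to the discussion in Remark~\ref{rem:connection_terms}.
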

	\begin{rem}\label{rem:deception_breaks_involutivity}
		A major simplification provided by Proposition \ref{prop:involutive_dists} is the fact that the matrix $A_{ii}|_{x^\star}$ is now symmetric, since the Hessian is a symmetric operator. \hfill $\square$
	\end{rem}
	\begin{rem}
		We note that Assumption \ref{asmp:involutive_distributions} will no longer hold, in general, if deception is involved. Indeed, as shown later in Proposition \ref{prop:tilted_directions_independent}, the effect of deception in the presence of information asymmetry is to tilt the distributions $\mathcal{E}_i$ controlled by the oblivious agents along the distributions controlled by the deceiver. Therefore, as soon as deception is involved, the new distributions $\tilde{\mathcal{E}}_i$ may no longer be involutive in the general case. This is a major motivation behind considering the general geometric analysis presented in this section, which allows us to study the stability of deception in Section \ref{sec:deception_stability}. \hfill $\square$
	\end{rem}
	\vspace{0.1cm}
	\subsubsection{Intra-Agent Commutativity of Vector Fields}
	Next, we consider the following Assumption.
	\begin{asmp}[Cross-player Commutativity]\label{asmp:commuting_vectorfields}
		For all $i,j\in\{1,\dots,N\}$ with $i\neq j$,
		$$[f_{i,p},f_{j,q}]=0,$$
		for all $p\in\{1,\dots,d_i\}$ and all $q\in\{1,\dots,d_j\}$.
	\end{asmp}
	\begin{rem}
		In words, Assumption \ref{asmp:commuting_vectorfields} requires that the vector fields controlled by each agent commute with those controlled by every other agent. Note that Assumption \ref{asmp:commuting_vectorfields} fails in both Examples \ref{exmp:sphere_exmp_no_deception} and \ref{exmp:simplex_exmp_no_deception}, in view of the non-vanishing commutators computed therein, which illustrates that the product-structure setting considered above is genuinely more restrictive. \hfill $\square$
	\end{rem}

	Under Assumptions \ref{asmp:indpenendent_families}-\ref{asmp:commuting_vectorfields}, Frobenius Theorem can be applied to show the existence of a local
	diffeomorphism $\phi : \mathcal{U} \to \mathcal{U}_1 \times \cdots \times \mathcal{U}_N$,  where each $\mathcal{U}_i$ is a smooth $d_i$-dimensional Riemannian manifold and $\phi_* \mathcal{E}_i = T\mathcal{U}_i$. In these coordinates, the leaf $\mathcal{U}_i(x)$ is identified with the product slice through $\phi(x)$. In particular,
	the Riemannian metric $g$ from Theorem \ref{thm:pseduo_grad} takes now a block-diagonal form $g = g^1 \oplus \cdots \oplus g^N$ in these coordinates. The following proposition establishes the key geometric consequence of the combined assumptions: the Levi-Civita connection of $g$ has no inter-agent components.
	\vspace{0.1cm}
	\begin{prop}\label{prop:commuting_vectorfields}
		Let Assumptions \ref{asmp:indpenendent_families}-\ref{asmp:commuting_vectorfields} be satisfied. Then,
		\begin{equation}\label{eq:vanishing_connection}
			\nabla_{f_{j,q}} f_{i,p} = 0
			\quad \text{on } \mathcal{U},
		\end{equation}
		for all $i \neq j$. \hfill $\Box$
	\end{prop}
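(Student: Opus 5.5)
The plan is to compute $\nabla_{f_{j,q}} f_{i,p}$ directly with the Koszul formula in the frame $\{f_{k,r}\}$. By the remark following Assumption~\ref{asmp:independent_vectorfields}, this frame is a global $g$-orthonormal frame on $\Dom$. So for any $X,Y,Z$ taken from the frame, the functions $g(X,Y)$, $g(Y,Z)$, $g(Z,X)$ are constant (either $0$ or $1$), and their derivatives drop out of the Koszul formula. It then reduces to
\begin{align*}
2\,g(\nabla_X Y, Z) = g([X,Y],Z) - g([Y,Z],X) + g([Z,X],Y).
\end{align*}
Since $\{f_{k,r}\}$ is a frame, a vector field vanishes iff its $g$-inner product with every $f_{k,r}$ vanishes. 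It therefore suffices to show that the right-hand side is zero for $X=f_{j,q}$, $Y=f_{i,p}$, $Z=f_{k,r}$, for every $k,r$, whenever $i\neq j$.

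I will treat the three terms one at a time, using Assumptions~\ref{asmp:involutive_distributions} and~\ref{asmp:commuting_vectorfields} together with the $g$-orthogonality of the $\mathcal{E}_\ell$ (by \eqref{eq:metric-proj}, distinct distributions are mutually orthogonal).

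\emph{First term.} $[f_{j,q},f_{i,p}]=0$ directly by Assumption~\ref{asmp:commuting_vectorfields}, since $i\neq j$.

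\emph{Second term}, $g([f_{i,p},f_{k,r}],f_{j,q})$. If $k\neq i$, the bracket vanishes by Assumption~\ref{asmp:commuting_vectorfields}. If $k=i$, Assumption~\ref{asmp:involutive_distributions} gives $[f_{i,p},f_{i,r}]\in\mathcal{E}_i$. This is $g$-orthogonal to $f_{j,q}\in\mathcal{E}_j$ because $j\neq i$.

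\emph{Third term}, $g([f_{k,r},f_{j,q}],f_{i,p})$. If $k\neq j$, the bracket vanishes by commutativity. If $k=j$, the bracket lies in $\mathcal{E}_j$ by involutivity, which is orthogonal to $f_{i,p}\in\mathcal{E}_i$.

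Hence $g(\nabla_{f_{j,q}}f_{i,p},f_{k,r})=0$ for all $(k,r)$, which yields \eqref{eq:vanishing_connection} on all of $\mathcal{U}$.

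I do not expect a serious obstacle. The only point requiring care is justifying that the metric terms in the Koszul formula vanish, which rests on the orthonormality of the frame under the metric of Theorem~\ref{thm:pseduo_grad}. The other point is that the case split in $k$ must use involutivity exactly when the bracket is intra-agent, and commutativity otherwise. Notably, the argument never invokes the product diffeomorphism $\phi$; it is purely tensorial and hence global on $\mathcal{U}$.
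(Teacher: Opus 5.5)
Your proof is correct and follows the paper's argument essentially verbatim. Orthonormality of the frame $\{f_{k,r}\}$ under $g$ removes the metric-derivative terms of the Koszul formula. Each remaining bracket term then vanishes, either by cross-player commutativity or, when the bracket is intra-agent, by involutivity combined with the $g$-orthogonality of distinct $\mathcal{E}_\ell$. The only difference is cosmetic: you organize the case split term by term rather than by the value of $k$.
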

	As a consequence of Proposition \ref{prop:commuting_vectorfields}, the intrinsic Jacobian from Theorem \ref{thm:jacobian_simplification} simplifies
	completely.
	\begin{cor}
		\label{cor:product_jacobian}
		Let the hypotheses of Proposition \ref{prop:commuting_vectorfields} hold and let $x^\star \in \mathcal{U}$ satisfy \eqref{eq:eqbm_uncoupled}. Then, the blocks of the intrinsic Jacobian $A|_{x^\star}$ are
		given by
		\begin{equation}
			\label{eq:product_jacobian}
			\left(A_{ij}\big|_{x^\star}\right)_{p,q}
			= \begin{cases}
				\hess^i(J_i|_{\mathcal{U}_i^\star})\!\left[f_{i,p},\, f_{i,q}\right]
				\big|_{x^\star},
				& i = j, \\[6pt]
				\hess(J_i)\!\left[f_{i,p},\, f_{j,q}\right]
				\big|_{x^\star},
				& i \neq j.
			\end{cases}
		\end{equation}
		
		\phantom{.}\hfill $\Box$
	\end{cor}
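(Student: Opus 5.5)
The corollary follows by combining Theorem~\ref{thm:jacobian_simplification}, Proposition~\ref{prop:involutive_dists} and Proposition~\ref{prop:commuting_vectorfields}, treating diagonal and off-diagonal blocks separately. The starting point is the block formula \eqref{eq:block-entries}:
\begin{equation*}
(A_{ij}|_{x^\star})_{p,q}=\hess(J_i)[f_{i,p},f_{j,q}]|_{x^\star}+dJ_i[\nabla_{f_{j,q}}f_{i,p}]|_{x^\star}.
\end{equation*}
It holds under Assumptions \ref{asmp:indpenendent_families}--\ref{asmp:independent_vectorfields}, which are included in the hypotheses of Proposition~\ref{prop:commuting_vectorfields}.

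\emph{Off-diagonal blocks} ($i\neq j$). Proposition~\ref{prop:commuting_vectorfields} gives $\nabla_{f_{j,q}}f_{i,p}=0$ on all of $\mathcal{U}$, so in particular at $x^\star$. The connection term $dJ_i[\nabla_{f_{j,q}}f_{i,p}]|_{x^\star}$ therefore vanishes identically. This holds even though $\grad(J_i)|_{x^\star}$ need not be zero, because the vector field itself is zero. Only $\hess(J_i)[f_{i,p},f_{j,q}]|_{x^\star}$ remains, which is the second case of \eqref{eq:product_jacobian}.

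\emph{Diagonal blocks} ($i=j$). The hypotheses of Proposition~\ref{prop:commuting_vectorfields} contain Assumption~\ref{asmp:involutive_distributions}, and $x^\star$ satisfies \eqref{eq:eqbm_uncoupled}. So Proposition~\ref{prop:involutive_dists} applies directly and yields $(A_{ii}|_{x^\star})_{p,q}=\hess^i(J_i|_{\mathcal{U}_i^\star})[f_{i,p},f_{i,q}]|_{x^\star}$, the first case of \eqref{eq:product_jacobian}.

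The only point needing care is making sure the hypotheses of the invoked propositions are all met. In particular, Assumption~\ref{asmp:commuting_vectorfields} is an addition to, not a replacement for, Assumption~\ref{asmp:involutive_distributions}: the phrase ``Assumptions \ref{asmp:indpenendent_families}--\ref{asmp:commuting_vectorfields}'' includes involutivity, so Proposition~\ref{prop:involutive_dists} is available.

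One could add a remark on consistency with the product structure. In the product chart $\phi$, the leaf $\mathcal{U}_i^\star$ is the slice through $\phi(x^\star)$, and $g=g^1\oplus\cdots\oplus g^N$. The induced Hessian on the slice then agrees with the restriction of the ambient Hessian to $\mathcal{E}_i$ when the second fundamental form of the slice vanishes, which is the case here. This check is not needed for the corollary, since Proposition~\ref{prop:involutive_dists} already supplies the diagonal formula.

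No real obstacle is expected: the proof is bookkeeping built from previously established results.
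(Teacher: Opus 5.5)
Your proof is correct and follows the paper's argument exactly: the diagonal blocks come directly from Proposition~\ref{prop:involutive_dists}, which applies because Assumption~\ref{asmp:involutive_distributions} is among the hypotheses, and the off-diagonal blocks follow by substituting $\nabla_{f_{j,q}}f_{i,p}=0$ from Proposition~\ref{prop:commuting_vectorfields} into the second equality of \eqref{eq:block-entries}. Your side remark is also correct: the leaves are totally geodesic here, since a Koszul computation with cross-player commutativity and involutivity kills the normal component of $\nabla_{f_{i,q}}f_{i,p}$. As you note, it is not needed for the corollary.
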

	\begin{proof}
		For diagonal blocks, the expression in \eqref{eq:product_jacobian} follows
		from Proposition \ref{prop:involutive_dists}, which
		requires only Assumptions \ref{asmp:indpenendent_families}-\ref{asmp:involutive_distributions}.
		For
		off-diagonal blocks with $i \neq j$, substituting
		\eqref{eq:vanishing_connection} into the second equality of \eqref{eq:block-entries} immediately gives $(A_{ij}|_{x^\star})_{p,q} =
		\hess(J_i)[f_{i,p}, f_{j,q}]|_{x^\star}$.
	\end{proof}
	
	\vspace{-0.2cm}
	\subsection{Leafwise Nash Equilibrium: Regularity and Stability}
	In this subsection, we establish conditions under which the equilibria of the
	nonholonomic gradient-play dynamics \eqref{eq:liebra_system} correspond to
	Nash equilibria of the underlying game. To do this, note that, when each player's distribution $\mathcal{E}_i$ is involutive, i.e. when Assumption~\ref{asmp:involutive_distributions} holds, the leaf $\mathcal{U}_i(x)$ of $\mathcal{E}_i$ through $x$, defined in \eqref{leaf}, characterizes the set of states accessible to player $i$ through unilateral motions along its actuation distribution. This geometric notion naturally leads to the concept of \emph{leafwise Nash equilibria}, formalized below:
	\begin{defn}\label{lne}
		Suppose that Assumption~\ref{asmp:involutive_distributions} holds, and let $\mathcal{U}_i(x)$ be the leaf of $\mathcal{E}_i$ through $x$, as in \eqref{leaf}.  A point $x^\star\in\mathcal{U}$ is said to be a strict local leafwise Nash equilibrium if, for every $i\in\{1,\dots,N\}$, there exists a
		neighborhood $\mathcal{O}_i$ of $x^\star$ in
		$\mathcal{U}_i(x^\star)$ such that $J_i(x^\star)>J_i(y)$, for all $y\in\mathcal{O}_i\setminus\{x^\star\}$. \hfill $\square$
	\end{defn}
	\begin{figure}[t]
		\centering
		\includegraphics[width=0.9\linewidth]{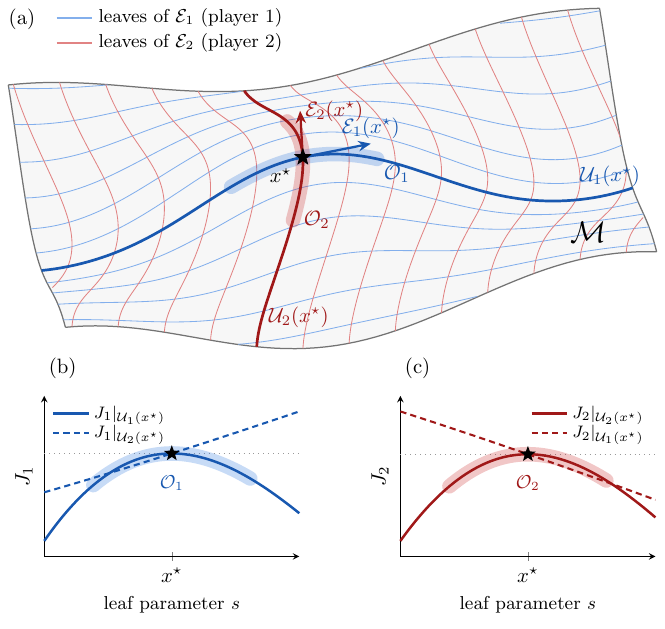}
		\caption{Leafwise Nash equilibrium for two players on a surface $\mathcal{M}$. (a)~A unilateral deviation of player~$i$ from $x^\star$ is confined to the leaf $\mathcal{U}_i(x^\star)$ through $x^\star$. The shaded arcs indicate the neighborhoods $\mathcal{O}_i\subset\mathcal{U}_i(x^\star)$ of $x^\star$. (b),~(c)~The payoffs along the leaves through $x^\star$, parametrized by the leaf parameter $s$ with $s=0$ at $x^\star$. The restriction of $J_i$ to its own leaf $\mathcal{U}_i(x^\star)$ (solid) attains a strict local maximum at $x^\star$ on $\mathcal{O}_i$. The restriction of $J_i$ to the other leaf $\mathcal{U}_j(x^\star)$, $j\neq i$, (dashed) is not stationary at $x^\star$, i.e., $x^\star$ is not a critical point of $J_i$ on $\mathcal{M}$ in general.}
		\label{fig:LWNE}
	\end{figure}

	\vspace{0.1cm}
	
	A geometric illustration of leafwise Nash equilibria is shown in Figure \ref{fig:LWNE}. The following proposition shows that, under the given assumptions, the equilibria of \eqref{eq:liebra_system} are precisely leafwise Nash equilibria.
	%
	
	\begin{prop}\label{propLNE}
		Suppose that Assumptions~\ref{asmp:indpenendent_families} and~\ref{asmp:involutive_distributions} hold, and that $x^\star\in\mathcal{U}$ satisfies the first-order condition \eqref{eq:eqbm_uncoupled} and the following second-order condition
		\begin{equation}\label{eq:leafwise_second_order}
			\hess^{i}
			\big(J_i|_{\mathcal{U}_i^\star}\big)\big|_{x^\star}
			\prec 0,~~\forall~i\in\{1,\dots,N\}.
		\end{equation}
		Then, $x^\star$ is a strict
		local leafwise Nash equilibrium. \hfill $\square$
	\end{prop}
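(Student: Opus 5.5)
The plan is to reduce the statement, player by player, to the classical second-order sufficient condition for a strict local maximum of a smooth function on a Riemannian manifold, applied on the leaf $\mathcal{U}_i^\star$. Fix $i\in\{1,\dots,N\}$. By Assumption~\ref{asmp:involutive_distributions} and the Frobenius theorem, $\mathcal{U}_i^\star$ is a weakly embedded submanifold of $\mathcal{U}$. It satisfies $T_y\mathcal{U}_i^\star=\mathcal{E}_i(y)$, and with the induced metric $g^i$ it is isometrically immersed, so $h_i:=J_i|_{\mathcal{U}_i^\star}$ is smooth on the leaf with its own manifold topology. The neighborhood $\mathcal{O}_i$ in Definition~\ref{lne} is to be understood in that leaf topology, which is the natural one since the leaf is only weakly embedded. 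It therefore suffices to show that $x^\star$ is a strict local maximizer of $h_i$ on $(\mathcal{U}_i^\star,g^i)$.

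\emph{First-order step.} I will show $\grad^i(h_i)|_{x^\star}=0$, where $\grad^i$ is the gradient with respect to $g^i$. For $v\in T_{x^\star}\mathcal{U}_i^\star=\mathcal{E}_i(x^\star)$ we have $dh_i[v]=dJ_i[v]=g(\grad(J_i),v)=g(P_{\mathcal{E}_i}\grad(J_i),v)$. The last equality uses that $v\in\mathcal{E}_i$ and that $P_{\mathcal{E}_i}$ is $g$-orthogonal. Hence $\grad^i(h_i)=\grad_{\mathcal{E}_i}(J_i)$ along the leaf, and this vanishes at $x^\star$ by \eqref{eq:eqbm_uncoupled}.

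\emph{Second-order step.} Take the exponential map $\exp^i_{x^\star}$ of $g^i$, which is a diffeomorphism from a small ball $B_\varepsilon\subset\mathcal{E}_i(x^\star)$ onto a normal neighborhood $\mathcal{O}_i$ of $x^\star$ in the leaf. For a unit $v$ and the geodesic $\gamma_v(t)=\exp^i_{x^\star}(tv)$, the intrinsic Hessian gives
\[
\tfrac{d}{dt}h_i(\gamma_v(t))\big|_{t=0}=0,\qquad \tfrac{d^2}{dt^2}h_i(\gamma_v(t))\big|_{t=0}=\hess^i(h_i)[v,v]\big|_{x^\star}\le-\lambda<0 .
\]
Here $\lambda>0$ is the smallest eigenvalue of $-\hess^i(h_i)|_{x^\star}$, which is positive by \eqref{eq:leafwise_second_order}. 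Equivalently, I will Taylor-expand the smooth function $\phi=h_i\circ\exp^i_{x^\star}$ on $B_\varepsilon$ to second order at $0$. Since $d\exp^i$ is the identity at $0$ and geodesics have zero acceleration, $\phi$ has zero gradient at $0$ and Euclidean Hessian equal to $\hess^i(h_i)|_{x^\star}$. By continuity of the second derivatives of $\phi$, after shrinking $\varepsilon$ the Hessian of $\phi$ satisfies $D^2\phi(w)\preceq-\tfrac{\lambda}{2}I$ on $B_\varepsilon$. The integral form of Taylor's remainder then gives $\phi(w)\le\phi(0)-\tfrac{\lambda}{4}|w|^2<\phi(0)$ for $w\neq0$. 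Thus $J_i(y)<J_i(x^\star)$ for all $y\in\mathcal{O}_i\setminus\{x^\star\}$. Repeating this for each $i$ proves the claim.

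The main obstacle is checking that the leaf is a legitimate smooth manifold on which these intrinsic tools apply, even though it is only weakly embedded. In particular, one must make sure that the neighborhood $\mathcal{O}_i$ is taken in the leaf topology and that the identification $\grad^i(h_i)=\grad_{\mathcal{E}_i}(J_i)$ holds. Once this is settled (it follows from \eqref{leaf} and the isometric induced metric), the rest is the standard Riemannian second-derivative test.
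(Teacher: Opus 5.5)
Your proposal is correct and follows the paper's proof essentially step for step. It uses the same first-order identification $d\big(J_i|_{\mathcal{U}_i^\star}\big)(x^\star)[v]=g^i\big(\grad_{\mathcal{E}_i}(J_i)(x^\star),v\big)$, followed by a second-order Taylor argument with integral remainder in a chart on the leaf, with the neighborhood taken in the leaf topology. The only cosmetic difference is the choice of chart: you use normal coordinates via $\exp^i_{x^\star}$ to identify the coordinate Hessian with $\hess^i$, whereas the paper takes an arbitrary chart and observes that the connection terms drop out because $x^\star$ is a critical point.
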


	\vspace{0.1cm}
	Under Assumptions~\ref{asmp:indpenendent_families}-\ref{asmp:involutive_distributions}, Proposition~\ref{prop:involutive_dists} showed, via \eqref{hessianexpression}, that $A_{ii}|_{x^\star}$ is the matrix representation of $\hess^{i}(J_i|_{\mathcal{U}_i^\star})\big|_{x^\star}$ with respect to the orthonormal frame
	$\{f_{i,p}(x^\star)\}_{p=1}^{d_i}$. Consequently, the second-order condition \eqref{eq:leafwise_second_order} can equivalently be
	written as $A_{ii}|_{x^\star}\prec 0$. We therefore obtain the
	following corollary.
	\begin{cor}
		Suppose that Assumptions~\ref{asmp:indpenendent_families}-\ref{asmp:involutive_distributions} hold, and that $x^\star\in\mathcal{U}$
		satisfies~\eqref{eq:eqbm_uncoupled}. If $A_{ii}|_{x^\star}\prec 0$, for all $i\in\{1,\dots,N\}$, then $x^\star$ is a strict local leafwise Nash equilibrium. \hfill $\square$
	\end{cor}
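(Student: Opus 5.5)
The plan is to reduce the Corollary to Proposition~\ref{propLNE}. That proposition already yields a strict local leafwise Nash equilibrium from the first-order condition \eqref{eq:eqbm_uncoupled} together with the second-order condition \eqref{eq:leafwise_second_order}. The first-order condition is a hypothesis here, so the only work is to show that $A_{ii}|_{x^\star}\prec 0$ is equivalent to $\hess^{i}(J_i|_{\mathcal{U}_i^\star})|_{x^\star}\prec 0$ for each $i$.

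For this equivalence I will use Proposition~\ref{prop:involutive_dists}. Its hypotheses are Assumptions~\ref{asmp:indpenendent_families}--\ref{asmp:involutive_distributions} and \eqref{eq:eqbm_uncoupled}, all of which are assumed in the Corollary. It gives $(A_{ii}|_{x^\star})_{p,q}=\hess^{i}(J_i|_{\mathcal{U}_i^\star})[f_{i,p},f_{i,q}]|_{x^\star}$. By \eqref{eq:metric-proj} the frame $\{f_{i,p}\}$ is $g$-orthonormal, and the leaf $\mathcal{U}_i^\star$ carries the induced metric $g^i$ with $T_{x^\star}\mathcal{U}_i^\star=\mathcal{E}_i(x^\star)$ by \eqref{leaf}. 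Hence $\{f_{i,p}(x^\star)\}_{p=1}^{d_i}$ is a $g^i$-orthonormal basis of $T_{x^\star}\mathcal{U}_i^\star$. For any $v=\sum_p c_p f_{i,p}(x^\star)$ with $c\neq0$ we then have $\hess^{i}(J_i|_{\mathcal{U}_i^\star})[v,v]|_{x^\star}=c^\top A_{ii}|_{x^\star}c$. By Remark~\ref{rem:deception_breaks_involutivity} the matrix $A_{ii}|_{x^\star}$ is symmetric, so $\prec0$ is unambiguous for it, and negative definiteness of the matrix is exactly negative definiteness of the Hessian bilinear form on $T_{x^\star}\mathcal{U}_i^\star$.

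With this equivalence in hand, invoking Proposition~\ref{propLNE} concludes that $x^\star$ is a strict local leafwise Nash equilibrium. The only delicate point is confirming that the frame vectors $f_{i,p}(x^\star)$ are genuinely tangent to the leaf, so that the basis identification is legitimate. This follows directly from \eqref{leaf} and the weak embedding of the leaf noted before Proposition~\ref{prop:involutive_dists}, so I do not expect a real obstacle.
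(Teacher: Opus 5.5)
Your proposal is correct and follows the paper's own argument. Proposition~\ref{prop:involutive_dists} identifies $A_{ii}|_{x^\star}$ with the matrix of $\hess^{i}(J_i|_{\mathcal{U}_i^\star})|_{x^\star}$ in the $g^i$-orthonormal frame $\{f_{i,p}(x^\star)\}_{p=1}^{d_i}$, so $A_{ii}|_{x^\star}\prec 0$ is exactly condition~\eqref{eq:leafwise_second_order}, and Proposition~\ref{propLNE} then yields the strict local leafwise Nash equilibrium.
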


	Finally, we study the role of Assumption \ref{asmp:commuting_vectorfields} in the game-theoretic nature of $x^{\star}$. To do this, note that, under Assumptions~\ref{asmp:indpenendent_families}-\ref{asmp:commuting_vectorfields}, the manifold $\mathcal{U}$ admits a local product
	decomposition compatible with the players' actuation
	distributions. Namely, there exist an open neighborhood $U\subset\mathcal M$ of $x^\star$, open sets $V_i\subset\mathbb R^{d_i}$, $i\in\{1,\ldots,N\}$, and
	a diffeomorphism $\varphi:U\to V_1\times\cdots\times V_N,\,\varphi(x)=(z_1,\ldots,z_N)$, such that, for every $z\in V_1\times\cdots\times V_N$ and every
	$i\in\{1,\ldots,N\}$,
	\[
	D\varphi^{-1}(z)
	\left(
	\{0\}\times\cdots\times\mathbb R^{d_i}
	\times\cdots\times\{0\}
	\right)
	=
	\mathcal E_i\bigl(\varphi^{-1}(z)\bigr).
	\]
	Let $z^\star:=\varphi(x^\star)
	=(z_1^\star,\ldots,z_N^\star)$, and define the local coordinate representation of the payoff of
	player $i$ as  $\widehat J_i
	:=J_i\circ\varphi^{-1}$. The following definition leverages the functions $\widehat J_i$ to provide the ``usual'' game-theoretic
	interpretation of Nash equilibria, where each player $i$ changes only its $i$th coordinate, while all the other coordinates remain fixed.
	\begin{defn}\label{classicnash}
		The point $x^\star$ is said to be a \emph{strict local Nash equilibrium in the
			local product coordinates induced by
			$\{\mathcal E_i\}_{i=1}^N$} if, for every
		$i\in\{1,\ldots,N\}$, there exists a neighborhood
		$W_i\subset V_i$ of $z_i^\star$ such that
		\[
		\widehat J_i(z_i^\star,z_{-i}^\star)
		<
		\widehat J_i(z_i,z_{-i}^\star),
		\qquad
		\forall z_i\in W_i\setminus\{z_i^\star\},
		\]
		where $z_{-i}^\star
		:=
		(z_1^\star,\ldots,z_{i-1}^\star,
		z_{i+1}^\star,\ldots,z_N^\star)$.
	\end{defn}

	The following proposition is the last result of this section, and it provides sufficient conditions under which the equilibria of \eqref{eq:liebra_system} are standard Nash equilibria in product coordinates.
	\begin{prop}\label{standardNE} Suppose that Assumptions \ref{asmp:indpenendent_families}-\ref{asmp:commuting_vectorfields} hold, and let
		$x^\star\in\mathcal{U}$ satisfy the first order condition \eqref{eq:eqbm_uncoupled} and the inequality
		\begin{equation}
			A_{ii}|_{x^\star}\prec0,
			\qquad
			i\in\{1,\dots,N\},
			\label{eq:product_nash_second_order}
		\end{equation}
		where $A_{ii}$ denotes the $i$th diagonal block of the
		intrinsic Jacobian \eqref{eq:product_jacobian}. Then, $x^\star$ is a strict local Nash equilibrium in the local product coordinates induced by 
		$\{\mathcal{E}_i\}_{i=1}^N$.
		\hfill $\square$\end{prop}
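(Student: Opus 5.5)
The plan is to reduce Proposition~\ref{standardNE} to Proposition~\ref{propLNE} and then move the leafwise statement into the product chart $\varphi$. Since Assumptions~\ref{asmp:indpenendent_families}--\ref{asmp:commuting_vectorfields} include Assumption~\ref{asmp:involutive_distributions}, Proposition~\ref{prop:involutive_dists} (equivalently, the diagonal case of Corollary~\ref{cor:product_jacobian}) applies. It says that $A_{ii}|_{x^\star}$ is the matrix of $\hess^{i}(J_i|_{\mathcal{U}_i^\star})|_{x^\star}$ in the $g^i$-orthonormal frame $\{f_{i,p}(x^\star)\}_{p=1}^{d_i}$. Hence \eqref{eq:product_nash_second_order} is exactly the second-order condition \eqref{eq:leafwise_second_order}. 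Proposition~\ref{propLNE} then shows that $x^\star$ is a strict local leafwise Nash equilibrium: for each $i$ there is a neighborhood $\mathcal{O}_i$ of $x^\star$ in $\mathcal{U}_i(x^\star)$ such that $J_i(x^\star)>J_i(y)$ for all $y\in\mathcal{O}_i\setminus\{x^\star\}$.

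Next I identify the coordinate slice with a piece of the leaf. Define $\iota_i: V_i\to U$ by $\iota_i(z_i):=\varphi^{-1}(z_i,z_{-i}^\star)$. It is a smooth injective immersion, since it is the restriction of a diffeomorphism to a slice. By the defining property of $\varphi$, $D\iota_i(z_i)$ maps $\mathbb{R}^{d_i}$ onto $\mathcal{E}_i(\iota_i(z_i))$. So $S_i:=\iota_i(V_i)$ is a connected (after shrinking $V_i$ to a ball) $d_i$-dimensional integral manifold of $\mathcal{E}_i$ through $x^\star$. By maximality of the leaf in the Frobenius theorem, $S_i$ is an open subset of $\mathcal{U}_i(x^\star)$ in its leaf topology, and $\iota_i$ is a diffeomorphism onto it. This is the step I expect to need the most care, because leaves are only weakly embedded and the neighborhood $\mathcal{O}_i$ of Definition~\ref{lne} is taken in the leaf topology, not the subspace topology of $\mathcal{M}$. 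I would handle it by citing the local structure of integral manifolds: every connected integral manifold of an involutive distribution is an open submanifold of the unique maximal leaf containing it \cite[Theorem 19.21]{lee2012smooth}. This makes $\iota_i$ continuous into the leaf topology and open onto its image.

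Finally, set $W_i:=\iota_i^{-1}(\mathcal{O}_i\cap S_i)$. This is an open neighborhood of $z_i^\star$ in $V_i$. For $z_i\in W_i\setminus\{z_i^\star\}$, the point $y=\iota_i(z_i)$ lies in $\mathcal{O}_i\setminus\{x^\star\}$ by injectivity of $\iota_i$. Therefore $\widehat J_i(z_i,z_{-i}^\star)=J_i(y)<J_i(x^\star)=\widehat J_i(z_i^\star,z_{-i}^\star)$, which is the strict local optimality along player $i$'s own coordinate block with the others frozen.

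One point must be reconciled. Condition \eqref{eq:product_nash_second_order} (negative definiteness) produces a strict local \emph{maximum} of each $\widehat J_i$ in $z_i$. This matches the payoff-maximization convention of Definition~\ref{lne} and of the ascent dynamics \eqref{eq:liebra_system}. I read the inequality displayed in Definition~\ref{classicnash} as intended with the same (maximization) orientation, and I will state the conclusion as $\widehat J_i(z_i^\star,z_{-i}^\star)>\widehat J_i(z_i,z_{-i}^\star)$.
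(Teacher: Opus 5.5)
Your proposal is correct and follows essentially the same route as the paper. You use Proposition~\ref{prop:involutive_dists} to turn \eqref{eq:product_nash_second_order} into \eqref{eq:leafwise_second_order}, invoke Proposition~\ref{propLNE}, and transfer strict local maximality through the slice map $z_i\mapsto\varphi^{-1}(z_i,z_{-i}^\star)$, which the paper likewise identifies with an open piece of the leaf $\mathcal{U}_i(x^\star)$. Your argument that the slice is open in the leaf topology (via maximality of leaves), and your reading of Definition~\ref{classicnash} as a maximization, supply details the paper only asserts; its proof also argues in terms of strict local maximizers.
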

	
	\vspace{0.1cm}
	\begin{rem}
		\normalfont
		In the special case where each $\mathcal{E}_i$ is spanned by coordinate vector fields in some chart,
		i.e., $f_{i,j} = \partial_{(i,j)}$, Assumptions \ref{asmp:indpenendent_families}-\ref{asmp:commuting_vectorfields} are automatically satisfied, and system \eqref{eq:liebra_system} takes the form
		\begin{align*}
			\dot{x} = \sum_{i=1}^N\sum_{j=1}^{d_i} \partial_{(i,j)}J_i(x) \partial_{(i,j)},
		\end{align*}
		in coordinates, i.e., the interaction is governed by the vector field whose components are partial derivatives of the payoff functions in local coordinates. Moreover, we compute that
		\begin{align*}
			\hess^i(J_i)[f_{i,p}, f_{i,q}] &= \partial^2_{(i,p),(i,q)} J_i, \\
			\hess(J_i)[f_{i,p}, f_{j,q}] &= \partial^2_{(j,q),(i,p)} J_i.
		\end{align*}
		In this case, \eqref{eq:product_jacobian} recovers
		the coordinate dependent formulas studied in \cite{ratliff2016characterization,mazumdar2020gradient} as a special case of Theorem \ref{thm:jacobian_simplification}. \hfill $\square$
	\end{rem}
	\begin{rem}\label{rem:N=2}
		We remark that, when $N=2$, the conclusion of Proposition~\ref{standardNE} holds without Assumption~\ref{asmp:commuting_vectorfields}, i.e., a strict local leafwise Nash equilibrium is a strict local Nash equilibrium in local product coordinates, and conversely. Indeed, under Assumptions~\ref{asmp:indpenendent_families} and~\ref{asmp:involutive_distributions}, the Frobenius Theorem applied separately to $\mathcal{E}_2$ and to $\mathcal{E}_1$ yields, near $x^\star$, submersions $z_1$ and $z_2$ with $\ker dz_1=\mathcal{E}_2$ and $\ker dz_2=\mathcal{E}_1$. Since $\mathcal{E}_1\cap\mathcal{E}_2=\{0\}$, the map $\phi=(z_1,z_2)$ is a local diffeomorphism that satisfies $\phi_*\mathcal{E}_i=T\mathcal{U}_i$ for $i\in\{1,2\}$, and the proof of Proposition~\ref{standardNE} applies without change. In particular, this is the case when $d_1=d_2=1$, as in Examples~\ref{exmp:sphere_exmp_no_deception_2} and \ref{exmp:simplex_exmp_no_deception_2}, since Assumption~\ref{asmp:involutive_distributions} holds automatically. We emphasize that this product structure is smooth but not necessarily Riemannian, and the metric $g$ need not be a product metric in the coordinates $(z_1,z_2)$. For $N\geq 3$, the same construction requires $\bigoplus_{j\neq i}\mathcal{E}_j$ to be involutive for every $i$, which may fail even if every $\mathcal{E}_i$ is involutive.
	\end{rem}
	\subsection{Examples (continued)}
	\vspace{0.1cm}
	Before we move on from the current section, we provide examples that illustrate the concepts introduced thus far.
	\vspace{0.1cm}
	\begin{exmp}[Example \ref{exmp:sphere_exmp_no_deception} cont'd]\label{exmp:sphere_exmp_no_deception_2}
		With the vector fields $\{e_i\}$ defined in Example \ref{exmp:sphere_exmp_no_deception}, we define the co-vector fields $\{\vartheta^i\}$ as
		\begin{equation*}
			\begin{aligned}
				\vartheta^i(x)[v]:=(-1)^{3-i}\Delta(x)^{-1} \, \langle v, e_{{3-i}}(x)\times x\rangle_{\mathbb{R}^3},
			\end{aligned}
		\end{equation*}
		for all $(x,v)\in T\mathcal{U}$. Then, it can be verified through direct computation that
		$\vartheta^i[e_{j}]=\delta_{ij}$,
		i.e.,  $\{\vartheta^i\}$ is the dual co-frame to the frame $\{e_{i}\}$.
		We now define a metric $g$ on the submanifold $\mathcal{U}$ by \eqref{eq:metric-proj} for any pair of vector fields $v,w$, and we consider the payoff functions $J_i(x)=2^{\frac{1}{2}}+(x_3+x_{3-i})$, which are depicted in Figure \ref{fig:S2_exmp_payoffs}.
		Direct computation gives $dJ_i[e_{i}](x)= -x_i$. Therefore, the only equilibrium point in $\mathcal{U}$ is given by $x^\star=(0,0,1)$. To verify the structural, as well as the asymptotic, stability properties of $x^\star$, we need to compute the intrinsic Jacobian $A$ defined in \eqref{eq:linearization}. To that end, we invoke Theorem \ref{thm:jacobian_simplification} and utilize the simplified expression obtained in \eqref{eq:block-entries}. It follows that $d(dJ_i[e_i])[e_j]|_{x^\star} = -\delta_{ij}$, i.e., the matrix representation of $A|_{x^\star}$ with respect to the frame $\{e_1,e_2\}$ is given by $A|_{x^\star}=-I_{2x2}$, which is clearly an automorphism of $T_{x^\star}\mathcal{U}$ and also Hurwitz. In particular, the diagonal entries are negative which implies that $x^\star$ is strict local leafwise Nash equilibrium. It follows from Proposition \ref{prop:linearization_stability} that $x^\star$ is an isolated locally exponentially stable equilibrium. \hfill $\square$
	\end{exmp}
	
	\vspace{0.1cm}
	\begin{exmp}[Example \ref{exmp:simplex_exmp_no_deception} cont'd]\label{exmp:simplex_exmp_no_deception_2}
		Next, we define the co-vector fields $\vartheta^i$ by $\vartheta^i(x)[v]:=\frac{v_i}{x_1x_2x_3}$, for any tangent vector $v=(v_1,v_2,v_3)\in T_x\mathcal{U}$. Then, it can be verified through direct computation that $\vartheta^i[e_j]=\delta_{ij}$, i.e., the co-frame $\{\vartheta^i\}$ is dual to the frame $\{e_i\}$. We now define a metric $g$ on the submanifold $\mathcal{U}$ by \eqref{eq:metric-proj}, which, automatically, makes the frame $\{e_i\}$ orthonormal with respect to $g$. In this case, we consider the payoff functions  $J_i(x):=1-\frac{1}{2}((x_{3-i}-\alpha)^2+(x_i-1+2\alpha)^2 +(x_3-\alpha)^2)$, for any $\alpha\in\mathbb{R}$, which are depicted in Figure \ref{fig:P2_exmp_payoffs} for $\alpha = 4/9$.
		Direct computation gives that
		\begin{align}\label{eq:dJ_i}
			dJ_i[e_i](x)= - x_1 x_2 x_3(x_i-x_3+3\alpha - 1),
		\end{align}
		which can be shown to imply that the only equilibrium in $\mathcal{U}$ is the point $x^\star=\frac{1}{3}(2-3\alpha,2-3\alpha,6\alpha -1)$, provided that $\alpha \in (1/6,2/3)$. To verify the structural, as well as the asymptotic, stability properties of $x^\star$, we compute the intrinsic Jacobian $A$ by invoking Theorem \ref{thm:jacobian_simplification} and using \eqref{eq:block-entries}. Indeed, substituting from \eqref{eq:dJ_i} into \eqref{eq:block-entries}, we obtain:
		\begin{equation*}
			\begin{aligned}
				d(dJ_1[e_1])[e_1] &= x_1 x_2^2 x_3 \big(x_1 \left(3 \alpha -4 x_3-1\right)\\
				&+x_3 \left(-3 \alpha +x_3+1\right)+x_1^2\big), \\
				d(dJ_1[e_1])[e_2] &= x_1^2 x_2 x_3 \big(x_2 \left(3 \alpha +x_1-2 x_3-1\right)\\
				&+x_3 \left(-3 \alpha -x_1+x_3+1\right)\big), \\
				d(dJ_1[e_2])[e_1] &= x_1 x_2^2 x_3 \big(x_1 \left(3 \alpha +x_2-2 x_3-1\right)\\
				&+x_3 \left(-3 \alpha -x_2+x_3+1\right)\big), \\
				d(dJ_1[e_2])[e_2] &= x_1^2 x_2 x_3 \big(x_2 \left(3 \alpha -4 x_3-1\right)\\
				&+x_3 \left(-3 \alpha +x_3+1\right)+x_2^2\big),
			\end{aligned}
		\end{equation*}
		\begin{figure}[t]
			\centering
			\includegraphics[width=0.8\linewidth]{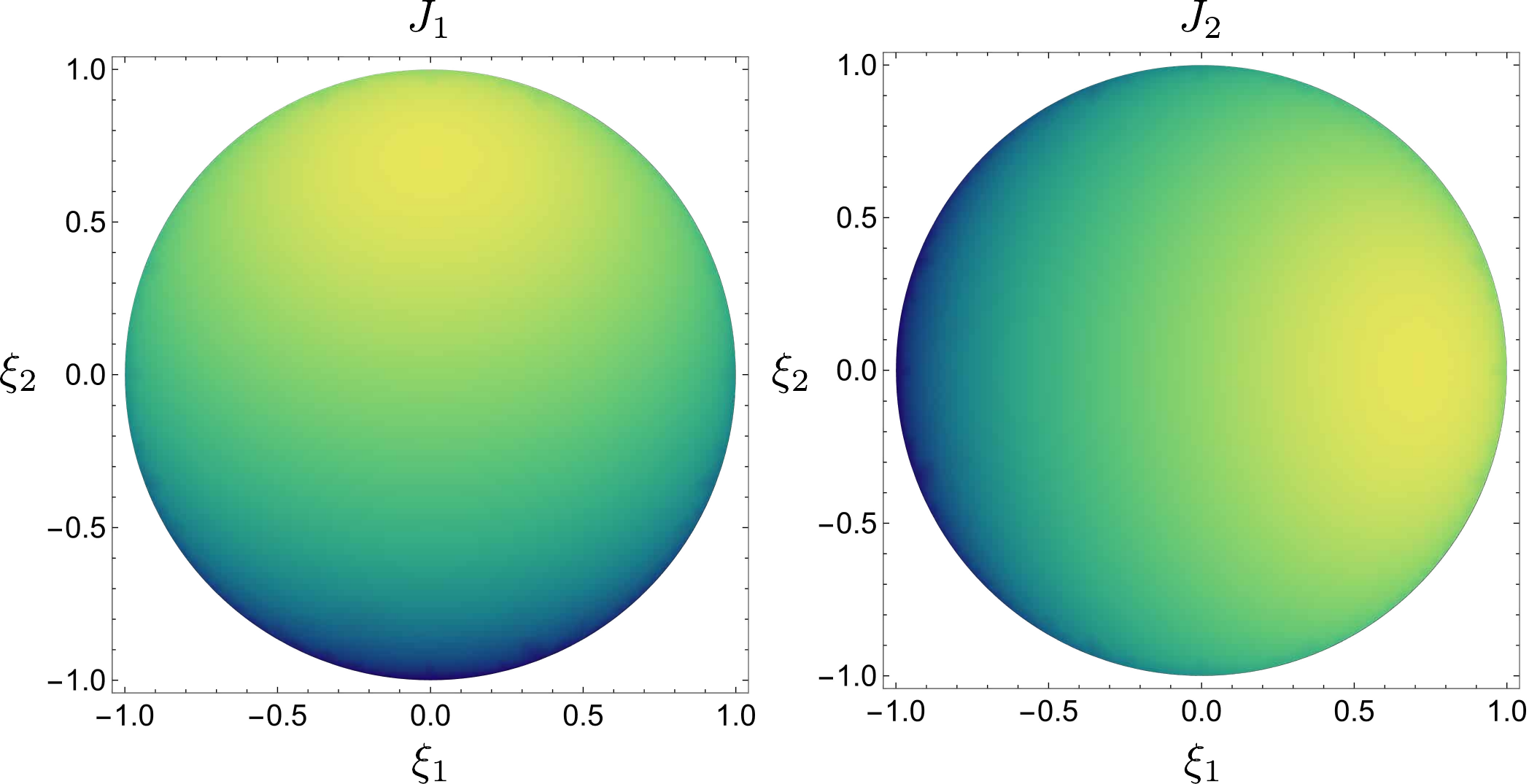}
			\caption{\normalfont An illustration of the payoffs $J_i$ for Example \ref{exmp:sphere_exmp_no_deception} in the $\xi$-coordinates. The red dots represent the maximizer of the corresponding function.}
			\label{fig:S2_exmp_payoffs}
		\end{figure}
		so that, at the point $x^\star$, we have
		\begin{align*}
			A|_{x^\star} = \frac{1}{729} (1-6 \alpha )^2 (2-3 \alpha )^4 \begin{bmatrix}
				-2 & -1 \\ -1 & -2
			\end{bmatrix},
		\end{align*}
		which is clearly an automorphism of $T_{x^\star}\mathcal{U}$ and Hurwitz. In particular, the diagonal entries are negative which implies that $x^\star$ is strict local leafwise Nash equilibrium. It also follows from Proposition \ref{prop:linearization_stability} that $x^\star$ is a non-degenerate isolated locally exponentially stable equilibrium  for the system. \hfill $\square$
	\end{exmp}
	\section{Information Asymmetry and Deception}\label{sec:deception_stability}
	As discussed in Section \ref{sec:asym}, when the assumption of information symmetry is violated, an agent with access to privileged information can bias the steady-state behavior of \eqref{eq:env_model} by manipulating only its own control inputs, without interfering with the actions or measurements of the oblivious agents. In this section, we analyze the interaction dynamics induced by the deceptive feedback law \eqref{eq:deceptive_es_law}, i.e., system \eqref{eq:liebra_sys_deception}, repeated below for convenience,
	\begin{align}
		\dot{x}= \sum_{i=1}^2\kappa_{i} \sum_{j=1}^{d_i} L_{\tilde{f}_{i,j}}(J_i)(x) \tilde{f}_{i,j}(x), \tag{\ref{eq:liebra_sys_deception}}
	\end{align}
	where, as in \eqref{eq:tilted_directions}, the vector fields $\tilde{f}_{i,j}$ are given by $\tilde{f}_{1,j}:=f_{1,j}$, and $\tilde{f}_{2,j}:= f_{2,j} + \sum_{k=1}^{d_1} \gamma_{j}^{k} f_{1,k}$.

	\subsection{Regularity and Stability under Deception}
	Similar to Section \ref{sec:no_deception_stability}, we impose Assumption \ref{asmp:indpenendent_families} on the vector fields $f_{i,j}$ and, for simplicity, we also impose Assumption \ref{asmp:independent_vectorfields}. Then, we have the following proposition.

	\vspace{0.1cm}
	\begin{prop}\label{prop:tilted_directions_independent}
		Let $\{\tilde{f}_{i,j}\}$ be given by \eqref{eq:tilted_directions} for any choice of deception parameters $\{\gamma_k^{j}\}$. Then, Assumption \ref{asmp:indpenendent_families} is satisfied on $\Dom$ with $\tilde{\Ei}_i = \mathrm{span}\{\tilde{f}_{i,j}\}_{j=1}^{d_i}$. In particular, the family $\{\tilde{f}_{i,j}\}$ remains pointwise linearly independent on $\Dom$ and $T_x\Dom = \tilde{\Ei}_1(x) \oplus \tilde{\Ei}_2(x)$, for all  $x \in \Dom$. \hfill $\Box$
	\end{prop}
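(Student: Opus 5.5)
The argument is pointwise linear algebra: the tilt \eqref{eq:tilted_directions} is a unipotent (block lower-triangular) change of frame, so it preserves linear independence and the direct-sum splitting. Fix $x\in\Dom$. By Assumption~\ref{asmp:indpenendent_families} (with $N=2$) and Assumption~\ref{asmp:independent_vectorfields}, the vectors $\{f_{1,k}(x)\}_{k=1}^{d_1}\cup\{f_{2,j}(x)\}_{j=1}^{d_2}$ form a basis of $T_x\Dom$, so $d_1+d_2=n$.

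Write $\Gamma\in\Real^{d_2\times d_1}$ for the matrix with entries $\Gamma_{jk}:=\gamma_j^k$. The tilted family is then obtained from the original one by the linear map with block matrix
\[
T:=\begin{bmatrix} I_{d_1} & 0\\ \Gamma & I_{d_2}\end{bmatrix},
\]
in the sense that $\tilde f_{1,k}=f_{1,k}$ and $\tilde f_{2,j}=f_{2,j}+\sum_k \Gamma_{jk} f_{1,k}$. Since $T$ is block lower-triangular with identity diagonal blocks, $\det T=1$ for every choice of the deception parameters. Hence $T$ is invertible, with inverse
\[
T^{-1}=\begin{bmatrix} I_{d_1} & 0\\ -\Gamma & I_{d_2}\end{bmatrix}.
\]
Consequently $\{\tilde f_{i,j}(x)\}$ is again a basis of $T_x\Dom$, i.e., the family is pointwise linearly independent. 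Smoothness of the tilted fields is immediate, since the $\gamma$'s are constants.

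Next I check the splitting. Each $\tilde\Ei_i(x)$ has dimension exactly $d_i$, because its spanning vectors are independent; this also shows that $\tilde\Ei_i$ is a regular (constant-rank) smooth distribution on $\Dom$. The two subspaces together span $T_x\Dom$. Their intersection is trivial: suppose $\sum_k a_k f_{1,k}=\sum_j b_j\big(f_{2,j}+\sum_k\gamma_j^k f_{1,k}\big)$. Independence of the original basis forces the $f_{2,j}$ coefficients to vanish, so $b=0$, and then $a=0$. Therefore $T_x\Dom=\tilde\Ei_1(x)\oplus\tilde\Ei_2(x)$, which is Assumption~\ref{asmp:indpenendent_families} for the tilted family on the same $\Dom$.

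No step here is a real obstacle. The only point needing care is making regularity explicit: constant rank $d_i$ comes from independence holding at every $x\in\Dom$, and this is exactly why Assumption~\ref{asmp:independent_vectorfields} is invoked. Without it, one would argue with the distributions directly, noting that $\tilde\Ei_2=(\mathrm{id}+\Gamma\text{-shear})(\Ei_2)$ is the image of $\Ei_2$ under the bundle automorphism $v_1+v_2\mapsto v_1+v_2+S v_2$, where $v_i\in\Ei_i$ and $S:\Ei_2\to\Ei_1$ is the induced linear map.
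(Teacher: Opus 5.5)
Your argument is correct and is essentially the paper's: the tilt is a block-triangular change of frame with identity diagonal blocks (your $T$, the paper's $I+\Gamma^\top$). Hence its determinant is $1$, the tilted family is again a basis of $T_x\Dom$, and the splitting $T_x\Dom=\tilde\Ei_1(x)\oplus\tilde\Ei_2(x)$ follows.

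One caveat concerns only your closing aside. Without Assumption~\ref{asmp:independent_vectorfields}, the assignment $f_{2,j}\mapsto\sum_k\gamma_j^k f_{1,k}$ need not define a linear map $S$ on $\Ei_2$. For example, $f_{2,1}=f_{2,2}$ with $\gamma_1^k\neq\gamma_2^k$ puts a nonzero vector of $\Ei_1$ into $\tilde\Ei_2$. So the shear argument breaks down there, and linear independence is genuinely needed for the direct sum, not merely for regularity.
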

	\begin{figure}[t]
		\centering
		\includegraphics[width=0.8\linewidth]{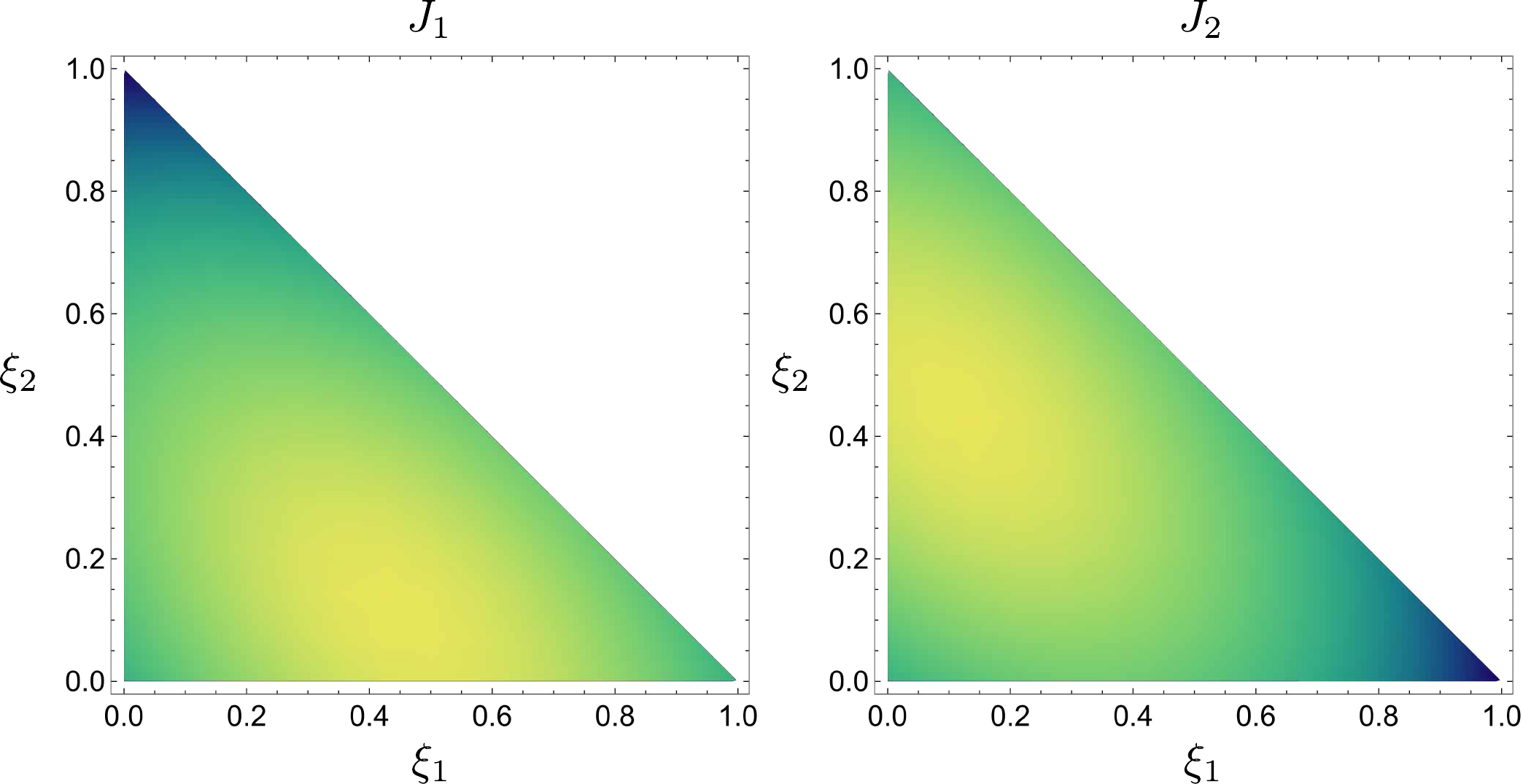}
		\caption{\normalfont An illustration of payoffs $J_i$ for Example \ref{exmp:simplex_exmp_no_deception} in the $\xi$-coordinates. The red dots represent the maximizer of the corresponding function.}
		\label{fig:P2_exmp_payoffs}
		\vspace{-0.3cm}
	\end{figure}
	\vspace{0.1cm}
	\begin{rem}[Geometric interpretation of deception]
		Even though $\tilde{\Ei}_1 = \Ei_1$ as subbundles of $T\Dom$ (since the deceiver does not alter their own directions), the victim's new bundle $\tilde{\Ei}_2$ is generically distinct from $\Ei_2$ since, from \eqref{eq:tilted_directions}, the vector field $\tilde{f}_{2,j}$ acquires components along the deceiver's directions $f_{1,k}$ with weights $\gamma_j^{k}$. This is the geometric mechanism of deception in our setting: the deceiver \emph{rotates the victim's effective actuation directions} into its own bundle without modifying its own reach. As a consequence, projected gradients on $\tilde{\Ei}_2$ are biased relative to those on $\Ei_2$, and the equilibrium of the averaged dynamics is generically shifted. Moreover, the proposition holds for \emph{all} choices of deception parameters, including arbitrarily large ones, and uniformly in $x \in \Dom$. This is a direct consequence of the block-triangular structure of $\Gamma$ in \eqref{eq:Gamma}, which decouples the invertibility analysis from the magnitude of the tilt block $\gamma$.  \phantom{.}\hfill $\square$
	\end{rem}

	\vspace{0.1cm}
	As a direct consequence of Theorem \ref{thm:pseduo_grad} and Proposition \ref{prop:tilted_directions_independent}, we have the following Corollary.
	\begin{cor}\label{cor:tilted_metric}
		There exists a Riemannian metric $\tilde{g}$ on the open sub-manifold $\mathcal{U}\subset\mathcal{M}$ such that \eqref{eq:liebra_sys_deception} takes the form
		\begin{align}\label{eq:projected_grad_form_tilted}
			\dot{x}&= \sum_{i=1}^2 \kappa_{i} \, \grad_{\tilde{\mathcal{E}}_i}(J_i)(x),
		\end{align}
		where $\grad_{\tilde{\mathcal{E}}_i}(J_i)$ is the vector field obtained through orthogonal projection of $\grad(J_i)$ with respect to $\tilde{g}$ onto $\tilde{\mathcal{E}}_i$. \hfill $\Box$
	\end{cor}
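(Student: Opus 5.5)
The plan is to read the corollary off from Theorem~\ref{thm:pseduo_grad} applied to the tilted family of vector fields, once we check that this family satisfies that theorem's hypothesis.

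First, by Proposition~\ref{prop:tilted_directions_independent}, for any choice of deception parameters the family $\{\tilde f_{i,j}\}$ satisfies Assumption~\ref{asmp:indpenendent_families} on $\Dom$. The distributions are $\tilde{\Ei}_1=\Ei_1$ and $\tilde{\Ei}_2$, and $T_x\Dom=\tilde{\Ei}_1(x)\oplus\tilde{\Ei}_2(x)$. Moreover, the family $\{\tilde f_{i,j}\}$ is pointwise linearly independent, so it is a global frame of $T\Dom$. System \eqref{eq:liebra_sys_deception} has exactly the form of \eqref{eq:liebra_system}, with $N=2$ and $f_{i,j}$ replaced by $\tilde f_{i,j}$. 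Hence Theorem~\ref{thm:pseduo_grad} applies verbatim and yields the metric $\tilde g$ and the representation \eqref{eq:projected_grad_form_tilted}.

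To make the construction explicit, and as a sanity check, I will take $\{\tilde\vartheta^{i,j}\}$ to be the coframe dual to $\{\tilde f_{i,j}\}$. I then set $\tilde g:=\sum_{i,j}\tilde\vartheta^{i,j}\otimes\tilde\vartheta^{i,j}$, mirroring \eqref{eq:metric-proj}. This $\tilde g$ is smooth, symmetric and positive definite, and the frame $\{\tilde f_{i,j}\}$ is $\tilde g$-orthonormal. In particular $\tilde{\Ei}_1\perp_{\tilde g}\tilde{\Ei}_2$, so the $\tilde g$-orthogonal projection onto $\tilde{\Ei}_i$ is $\sum_j\tilde\vartheta^{i,j}\otimes\tilde f_{i,j}$. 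The gradient identity $\tilde g(\grad(J),X)=dJ[X]$, evaluated on the orthonormal frame, gives $\grad(J)=\sum_{i,j}dJ[\tilde f_{i,j}]\,\tilde f_{i,j}$. Projecting then yields $\grad_{\tilde{\Ei}_i}(J_i)=\sum_j L_{\tilde f_{i,j}}(J_i)\,\tilde f_{i,j}$. Summing with the weights $\kappa_i$ recovers the right-hand side of \eqref{eq:liebra_sys_deception}.

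The only point needing care is that Theorem~\ref{thm:pseduo_grad} requires the splitting, and this is exactly what Proposition~\ref{prop:tilted_directions_independent} supplies; there is no genuine obstacle. I would also note that $\tilde g$ depends smoothly on $\gamma$ and reduces to $g$ at $\gamma=0$, since this is what the later persistence analysis will use. Finally, because $\tilde g$ generally differs from $g$ even on $\Ei_1$, this is what justifies the ``tilted geometry'' interpretation.
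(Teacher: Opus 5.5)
Your proposal is correct and follows the paper's route: the paper obtains this corollary directly from Theorem~\ref{thm:pseduo_grad} once Proposition~\ref{prop:tilted_directions_independent} verifies Assumption~\ref{asmp:indpenendent_families} for the tilted family. Your explicit $\tilde g=\sum_{i,j}\tilde\vartheta^{i,j}\otimes\tilde\vartheta^{i,j}$ is exactly what that theorem's co-metric construction produces when each tilted family is linearly independent.

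One correction to your closing aside. Since $\tilde f_{1,j}=f_{1,j}$ is orthonormal for both metrics, $\tilde g$ and $g$ actually agree on $\Ei_1\times\Ei_1$, and $\grad_{\tilde\Ei_1}(J_1)=\grad_{\Ei_1}(J_1)$. The tilt appears instead in the cross terms $\tilde g(f_{1,k},f_{2,j})=-\gamma_j^k$ and in $\tilde g$ on $\Ei_2$. Equivalently, the $\tilde g$-orthogonal complement of $\Ei_1$ becomes $\tilde\Ei_2$ rather than $\Ei_2$.
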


	The result of Corollary \ref{cor:tilted_metric}  allows us to, once again, take $\kappa_i=1$ and look for equilibria of system \eqref{eq:liebra_sys_deception} using the condition
	\begin{align}\label{eq:eqbm_tilted}
		\grad_{\tilde{\mathcal{E}}_i}(J_i)(x^\star) = 0, \quad i\in\{1,2\}.
	\end{align}
	In particular, since $\tilde{\mathcal{E}}_1 = \mathcal{E}_1$, this condition is unchanged from the analysis in Section \ref{sec:no_deception_stability} for $i=1$. However, unlike Section \ref{sec:no_deception_stability}, the bundle $\tilde{\mathcal{E}}_2$ is tilted along the bundle $\mathcal{E}_1$. In particular, utilizing the assumption that the vector fields $\{f_{i,j}\}$, and, consequently the vector fields $\{\tilde{f}_{i,j}\}$, are linearly independent, as well as \eqref{eq:tilted_directions}, the new equilibrium conditions take the form
	\begin{align*}
		0&=dJ_1[f_{1,j}](x), & j&\in\{1,\dots,d_1\}, \\
		0&=dJ_2[f_{2,j}](x)+\sum_{k=1}^{d_1}\gamma_{j}^{k}dJ_2[f_{1,k}](x), & j&\in\{1,\dots,d_2\},
	\end{align*}
	which implies that an equilibrium point of system \eqref{eq:liebra_sys_deception} will, in general, depend on the matrix of deception parameters $\gamma\in \mathbb{R}^{d_1\times d_2}$.
	Nevertheless, as shown in Theorem \ref{thm:deception_stability_IFT} below, the stability properties of this equilibrium point are preserved for small values of $\gamma$.
	The following theorem is the main result of this section:

	\vspace{0.1cm}
	\begin{thm}\label{thm:deception_stability_IFT}
		Suppose that $x^\star\in \mathcal{U}$ is a locally exponentially stable equilibrium point for the interaction dynamics \eqref{eq:liebra_system}. Then, there exists an open neighborhood $\Omega\subset\mathbb{R}^{d_1\times d_2}$ with $0\in\Omega$ and a smooth map $X^\star:\Omega\rightarrow\mathcal{U}$ such that $X^\star(0) = x^\star$ and, for all $\gamma\in \Omega$, the point $X^\star(\gamma)$ is a locally exponentially stable equilibrium point for the interaction dynamics \eqref{eq:liebra_sys_deception}. \hfill $\Box$
	\end{thm}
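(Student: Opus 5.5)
The plan is to apply the implicit function theorem to the vector field of \eqref{eq:liebra_sys_deception} viewed as a smooth family parametrized by $\gamma$, and then to use continuity of eigenvalues. Since the result is local, I will work in a coordinate chart $\psi:U\to\Real^n$ around $x^\star$ with $U\subset\Dom$. Set $\kappa_i=1$, which is allowed by the discussion after Corollary \ref{cor:tilted_metric}. Define
\[
F(x,\gamma):=\sum_{i=1}^{2}\sum_{j=1}^{d_i} L_{\tilde f_{i,j}}(J_i)(x)\,\tilde f_{i,j}(x).
\]
From \eqref{eq:tilted_directions}, the fields $\tilde f_{2,j}$ are affine in $\gamma$, so $F$ is polynomial (quadratic) in $\gamma$ and smooth in $x$. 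In coordinates, $F$ is therefore a smooth map $\psi(U)\times\Real^{d_1\times d_2}\to\Real^n$ with $F(\psi^{-1}(\cdot),0)$ equal to the nominal vector field of \eqref{eq:liebra_system}.

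\textbf{Step 1: nondegeneracy at $\gamma=0$.} By hypothesis $x^\star$ is locally exponentially stable, so Proposition \ref{prop:linearization_stability} shows that $A|_{x^\star}$ is Hurwitz. At an equilibrium the coordinate Jacobian $D_xF(x^\star,0)$ is the matrix of $A|_{x^\star}$. The reason is that for a vector field $V$ with $V(x^\star)=0$, we have $\nabla_X V|_{x^\star}=[X,V]$-independent of connection terms, and this equals $DV(x^\star)X$ in coordinates; the Christoffel terms multiply $V(x^\star)=0$. Consequently $D_xF(x^\star,0)$ is invertible, and indeed Hurwitz.

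\textbf{Step 2: implicit function theorem.} The implicit function theorem now provides an open neighborhood $\Omega_0\ni 0$ and a smooth map $X^\star:\Omega_0\to U$ with $X^\star(0)=x^\star$ and $F(X^\star(\gamma),\gamma)=0$. By Proposition \ref{prop:tilted_directions_independent} and Corollary \ref{cor:tilted_metric}, these points are equilibria of \eqref{eq:liebra_sys_deception} in $\Dom$, equivalently solutions of \eqref{eq:eqbm_tilted}. Note that equilibria do not depend on the gains, so the normalization $\kappa_i=1$ is harmless.

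\textbf{Step 3: persistence of stability.} The map $\gamma\mapsto M(\gamma):=D_xF(X^\star(\gamma),\gamma)$ is continuous. Its value $M(0)$ has all eigenvalues in the open left half-plane, and eigenvalues depend continuously on matrix entries. Hence there is an open $\Omega\subset\Omega_0$ containing $0$ on which $M(\gamma)$ is Hurwitz. By the same argument as in Step 1, $M(\gamma)$ represents the intrinsic Jacobian $\tilde A|_{X^\star(\gamma)}$ built with $\tilde g$. Proposition \ref{prop:linearization_stability}, applied to the tilted family, which satisfies Assumption \ref{asmp:indpenendent_families} by Proposition \ref{prop:tilted_directions_independent}, or alternatively the classical Lyapunov linearization theorem in coordinates, then yields local exponential stability of $X^\star(\gamma)$.

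\textbf{Main obstacle.} The step needing the most care is the identification of the intrinsic Jacobian with the coordinate Jacobian at equilibria. This identification is what makes the argument independent of the fact that the metric $\tilde g$ itself changes with $\gamma$. Working in coordinates for $F$ sidesteps the $\gamma$-dependence of $\tilde g$ entirely. The only point to check is that the metric-dependent correction terms vanish where $F=0$, which holds because they are linear in $F(x^\star)$.
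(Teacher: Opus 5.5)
Your proposal is correct and follows the same skeleton as the paper's proof: the implicit function theorem at $(x^\star,0)$, then continuity of the spectrum along the branch $X^\star(\gamma)$, then Proposition~\ref{prop:linearization_stability} (or classical linearization). The difference is in what you differentiate. You apply the implicit function theorem to the full deceptive vector field in a chart, and identify its Jacobian at a zero with the intrinsic Jacobian because the Christoffel terms multiply the vanishing field. The paper instead applies it to the frame components $F_{1,j}=dJ_1[f_{1,j}]$, $F_{2,j}=dJ_2[\tilde f_{2,j}]$, and identifies $\partial_x F(x^\star,0)$ with $A|_{x^\star}$ via the block-entry formula of Theorem~\ref{thm:jacobian_simplification}. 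Your chart-based version has the small advantage of making the continuity of the Jacobian along the branch manifest, even though $\tilde g$ depends on $\gamma$.
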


	\vspace{0.1cm}
	The following result is immediate from Theorem \ref{thm:deception_stability_IFT}.
	\begin{cor}\label{cor:sensitivity}
		Under the hypotheses of Theorem \ref{thm:deception_stability_IFT}, and under the identification of $T_{x^\star}\Dom$ with $\Real^n$ provided by the frame $\{f_{i,j}(x^\star)\}$, the map $X^\star$ satisfies
		\begin{equation}\label{eq:sensitivity}
			\frac{\partial X^\star}{\partial \gamma_j^{k}}\bigg|_{\gamma=0} = -\,dJ_2[f_{1,k}](x^\star)\;\big(A|_{x^\star}\big)^{-1}\big[f_{2,j}(x^\star)\big],
		\end{equation}
		for all $j\in\{1,\dots,d_2\}$ and $k\in\{1,\dots,d_1\}$.
		\phantom{.}\hfill $\Box$
	\end{cor}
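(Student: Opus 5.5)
I will obtain \eqref{eq:sensitivity} by implicitly differentiating the equilibrium condition that defines $X^\star(\gamma)$ in Theorem~\ref{thm:deception_stability_IFT}, using the nominal equilibrium condition \eqref{eq:eqbm_uncoupled} to simplify.

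Take $\kappa_i=1$ and write the right-hand side of \eqref{eq:liebra_sys_deception} as a smooth family of vector fields
\[
F(x,\gamma)=\sum_{j=1}^{d_1}dJ_1[f_{1,j}]\,f_{1,j}+\sum_{j=1}^{d_2}dJ_2[\tilde f_{2,j}]\,\tilde f_{2,j},
\]
with $F(\cdot,0)$ equal to the vector field of \eqref{eq:liebra_system}. The first step is to compute the parameter derivative at $\gamma=0$. Since $\tilde f_{2,j}=f_{2,j}+\sum_k\gamma_j^k f_{1,k}$, the product rule gives
\[
\partial_{\gamma_j^k}F\big|_{\gamma=0}=dJ_2[f_{1,k}]\,f_{2,j}+dJ_2[f_{2,j}]\,f_{1,k}.
\]
At $x^\star$ the second term vanishes because of \eqref{eq:eqbm_uncoupled}, i.e. $dJ_2[f_{2,j}](x^\star)=0$. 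Hence
\[
\partial_{\gamma_j^k}F(x^\star,0)=dJ_2[f_{1,k}](x^\star)\,f_{2,j}(x^\star).
\]

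The second step is to differentiate the identity $F(X^\star(\gamma),\gamma)=0$ at $\gamma=0$. The key point is that the intrinsic derivative of $F(\cdot,0)$ at the zero $x^\star$ equals $A|_{x^\star}$, where $A$ is the Jacobian of \eqref{eq:linearization}. At a zero, the covariant derivative $\nabla_v F$ does not depend on the choice of connection. So in any chart, and in particular in the frame identification of $T_{x^\star}\mathcal U$ with $\mathbb R^n$, the ordinary Jacobian of $F(\cdot,0)$ at $x^\star$ coincides with $A|_{x^\star}$. The chain rule then gives
\[
A|_{x^\star}\big[\partial_{\gamma_j^k}X^\star(0)\big]+\partial_{\gamma_j^k}F(x^\star,0)=0 .
\]

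The third step is to invert. Exponential stability of $x^\star$ means $A|_{x^\star}$ is Hurwitz by Proposition~\ref{prop:linearization_stability}, so it is invertible. This invertibility is exactly the condition under which the implicit function theorem produces the smooth map $X^\star$ in Theorem~\ref{thm:deception_stability_IFT}. Solving gives
\[
\partial_{\gamma_j^k}X^\star(0)=-\,dJ_2[f_{1,k}](x^\star)\,\big(A|_{x^\star}\big)^{-1}\big[f_{2,j}(x^\star)\big],
\]
since $dJ_2[f_{1,k}](x^\star)$ is a scalar and pulls out of the linear map. This is \eqref{eq:sensitivity}.

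The only delicate point is the connection-independence claim in the second step. To justify it, note that in a chart $\nabla_vF=DF\,v+\Gamma(v,F)$, where $\Gamma$ collects the Christoffel terms, and the second term vanishes at a zero of $F$. Everything else is routine differentiation.
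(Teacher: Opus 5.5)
Your proof is correct, and it is the implicit-differentiation argument the paper has in mind when it calls the corollary immediate. The difference is that you differentiate a different object, which changes which facts carry the weight.

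The paper works with the scalar map $F:\mathcal{U}\times\mathbb{R}^{d_1\times d_2}\to\mathbb{R}^n$ of \eqref{eq:F} from the proof of Theorem~\ref{thm:deception_stability_IFT}. Its components $dJ_i[\tilde f_{i,j}]$ are the components of the deceptive vector field in the tilted frame $\{\tilde f_{i,j}\}$. There $\gamma$ enters affinely, so $\partial_{\gamma_j^k}F(x^\star,0)$ is simply the vector with the single entry $dJ_2[f_{1,k}](x^\star)$ in slot $(2,j)$, with no term to cancel. Moreover, $\partial_xF(x^\star,0)\simeq A|_{x^\star}$ is already recorded in \eqref{eq:F-to-A} via the first equality of Theorem~\ref{thm:jacobian_simplification}. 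The corollary is then the one-line formula $DX^\star(0)=-(\partial_xF)^{-1}\partial_\gamma F$, read through the frame identification.

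By differentiating the vector field itself, you pick up the extra term $dJ_2[f_{2,j}]\,f_{1,k}$, which you correctly remove with \eqref{eq:eqbm_uncoupled}. You also need the fact that the linearization of a vector field at a zero is connection-independent. In return you bypass Theorem~\ref{thm:jacobian_simplification} entirely, since by \eqref{eq:linearization} and Theorem~\ref{thm:pseduo_grad} $A$ is literally $\nabla F(\cdot,0)$. You also obtain $A|_{x^\star}[\partial_{\gamma_j^k}X^\star(0)]=-dJ_2[f_{1,k}](x^\star)\,f_{2,j}(x^\star)$ as an identity in $T_{x^\star}\mathcal{U}$, which shows the formula needs no identification with $\mathbb{R}^n$ at all.

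One phrasing fix: $\{f_{i,j}\}$ is in general not a coordinate frame (e.g.\ $[e_1,e_2]\neq 0$ in Example~\ref{exmp:sphere_exmp_no_deception}), so the frame identification is not a special case of ``any chart.'' What your chart computation actually delivers is the intrinsic identity above. Expressing it in the basis $\{f_{i,j}(x^\star)\}$ is then just a choice of basis at a single point.
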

	\medskip
	\begin{rem}
		Formula \eqref{eq:sensitivity} makes the mechanism of Proposition \ref{prop:tilted_directions_independent} quantitative: to first order, the deceiver shifts the equilibrium along the directions $(A|_{x^\star})^{-1}[f_{2,j}(x^\star)]$, weighted by the sensitivities $dJ_2[f_{1,k}](x^\star)$ of the victim's payoff along the deceiver's directions. In particular, if $dJ_2[f_{1,k}](x^\star)=0$ for all $k$, i.e., if the victim's payoff is stationary along the deceiver's directions at $x^\star$, then deception is ineffective to first order, regardless of the choice of $\gamma$. 
		\hfill $\square$
	\end{rem}
	\begin{rem}\label{rem:beyond_two_agents}
		The restriction to two agents considered in this section is made for clarity of exposition, not out of necessity. In particular, consider $N$ agents and suppose that the information-asymmetry structure is \emph{acyclic}, i.e., there exists an ordering of the agents such that agent $i$ can measure, and embed into its own inputs, the inputs of agent $\ell$ only if $i < \ell$. Then, the matrix $\Gamma$ representing the tilt, cf. \eqref{eq:Gamma}, is strictly block upper-triangular in this ordering, hence nilpotent, and $\det(I+\Gamma)=1$ irrespective of the deception parameters. Consequently, Proposition \ref{prop:tilted_directions_independent}, Theorem \ref{thm:deception_stability_IFT}, and Corollary \ref{cor:sensitivity} extend verbatim to this setting. The genuinely distinct case is that of a \emph{cyclic} information structure, e.g., two agents simultaneously attempting to deceive one another, in which case $I+\Gamma$ may become singular for certain parameter choices and the tilted family may fail to span the tangent space. \hfill $\square$
	\end{rem}

	\vspace{-0.3cm}
	\subsection{Deceptive Nash Equilibria}
	A point $x^{\star}$ satisfying Theorem \ref{thm:deception_stability_IFT} is called a \emph{deceptive Nash equilibrium} if it is a Nash equilibrium—either leafwise (c.f. Definition \ref{lne}) or in local product coordinates (c.f. Definition \ref{classicnash})—for the payoff functions $J_i$ and \emph{tilted} distributions $\{\tilde{\mathcal{E}}_i\}$. With these definitions, the following corollary follows directly from Propositions \ref{propLNE} and \ref{standardNE}:
	\begin{cor}
		For system \eqref{eq:liebra_sys_deception}, the following holds:
		\begin{enumerate}
			\item If Assumptions \ref{asmp:indpenendent_families} and \ref{asmp:involutive_distributions} hold with respect to $\{\tilde{\mathcal{E}}_i\}$ and $\{\tilde{f}_{i,k}\}$, and  $x^{\star}$ satisfies the first-order condition \eqref{eq:eqbm_tilted} and the second-order condition \eqref{eq:leafwise_second_order}, then $x^{\star}$ is a deceptive strict leafwise Nash equilibrium.
			\item If Assumptions \ref{asmp:indpenendent_families}-\ref{asmp:commuting_vectorfields} hold with respect to $\{\tilde{\mathcal{E}}_i\}$ and $\{\tilde{f}_{i,k}\}$, and
			$x^\star$ satisfies the first order condition \eqref{eq:eqbm_tilted} and the inequality \eqref{eq:product_nash_second_order}, then $x^\star$ is a deceptive strict local Nash equilibrium in the local product coordinates induced by $\{\tilde{\mathcal{E}}_i\}_{i=1}^N$.
		\end{enumerate}
	\end{cor}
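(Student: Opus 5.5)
The corollary is a direct relabeling argument: Propositions \ref{propLNE} and \ref{standardNE} are statements about an arbitrary admissible family of vector fields and payoffs, so we apply them to the tilted family $\{\tilde f_{i,k}\}$ in place of $\{f_{i,k}\}$.

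The first step is to check that every object entering those propositions has a tilted counterpart built in exactly the same way. Proposition \ref{prop:tilted_directions_independent} guarantees that $\{\tilde f_{i,k}\}$ satisfies Assumption \ref{asmp:indpenendent_families} on $\Dom$ and is pointwise linearly independent, so Assumption \ref{asmp:independent_vectorfields} also holds for it. Consequently:
\begin{itemize}
\item Corollary \ref{cor:tilted_metric}, that is, Theorem \ref{thm:pseduo_grad} applied to the tilted family, provides the metric $\tilde g$, with respect to which $\{\tilde f_{i,k}\}$ is orthonormal.
\item System \eqref{eq:liebra_sys_deception} has exactly the form \eqref{eq:liebra_system}, with $f$ replaced by $\tilde f$.
\item Its equilibrium condition \eqref{eq:eqbm_tilted} is precisely \eqref{eq:eqbm_uncoupled} for the tilted data.
\item The intrinsic Jacobian, its blocks $A_{ii}$, the leaves $\tilde{\mathcal U}_i(x^\star)$ of $\tilde{\mathcal E}_i$, and the induced Hessians $\hess^i$ are all defined verbatim with $(\tilde g,\tilde{\mathcal E}_i,\tilde f_{i,k})$.
\end{itemize}
In item 1) the second-order condition \eqref{eq:leafwise_second_order} must accordingly be read with respect to these tilted objects, and in item 2) condition \eqref{eq:product_nash_second_order} likewise.

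For item 1), the hypotheses of Proposition \ref{propLNE} are then exactly satisfied by the tilted data: Assumptions \ref{asmp:indpenendent_families} and \ref{asmp:involutive_distributions} for $\{\tilde{\mathcal E}_i\}$, the first-order condition \eqref{eq:eqbm_tilted}, and negative definiteness of the tilted leafwise Hessians. Proposition \ref{propLNE} yields that $x^\star$ is a strict local leafwise Nash equilibrium with respect to the leaves of $\tilde{\mathcal E}_i$. By the definition just given, this is a deceptive strict leafwise Nash equilibrium.

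For item 2), we argue the same way with Proposition \ref{standardNE}. Assumptions \ref{asmp:indpenendent_families}--\ref{asmp:commuting_vectorfields} for the tilted family give the local product coordinates induced by $\{\tilde{\mathcal E}_i\}$, and $\tilde A_{ii}|_{x^\star}\prec0$ supplies the second-order hypothesis. The proposition then gives a strict local Nash equilibrium in these coordinates, which is by definition a deceptive strict local Nash equilibrium.

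The only point needing care, and what I expect to be the main obstacle, is confirming that Propositions \ref{propLNE} and \ref{standardNE} depend on no property of $\{f_{i,k}\}$ beyond the listed assumptions. In particular, they must not rely on the nominal metric $g$ itself, or on anything specific to $\gamma=0$. I would verify this by noting that their proofs use only the metric built from the frame, as in Theorem \ref{thm:pseduo_grad}, and the Frobenius leaves. Both are reconstructed intrinsically from the tilted family, so the arguments transfer unchanged. The assumption $N=2$ in the deception setting poses no issue, since both propositions hold for general $N$.
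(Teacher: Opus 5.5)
Your proposal is correct and matches the paper, which simply asserts that the corollary follows directly from Propositions \ref{propLNE} and \ref{standardNE} applied to the tilted data $\{\tilde f_{i,k}\}$, $\{\tilde{\mathcal E}_i\}$, $\tilde g$. Your additional checks are what the paper leaves implicit: via Proposition \ref{prop:tilted_directions_independent} and Corollary \ref{cor:tilted_metric}, the tilted family also satisfies Assumption \ref{asmp:independent_vectorfields} and carries its own orthonormalizing metric, so the second-order conditions must be read with the tilted objects.
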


	Next, we revisit the examples from Section \ref{sec:no_deception_stability} under deception.
	\begin{exmp}[Example \ref{exmp:sphere_exmp_no_deception} cont'd]\label{exmp:sphere_exmp_deception}
		We consider the case in which the $1^{\text st}$-agent implements a deceptive strategy towards the $2^{\text nd}$-agent. In this case, $\{e_i\}$ are replaced by $\tilde{e}_1(x):= e_1(x)$,  $\tilde{e}_2(x)= e_{2}(x) + \gamma e_1(x)$, where $\gamma\in\mathbb{R}$ is the deception parameter. The new vector fields are depicted in Figure \ref{fig:S2_example_vectorfields_tilted} for $\gamma = \frac{1}{2}$.
		\begin{figure}[t]
			\centering
			\includegraphics[width=0.8\linewidth]{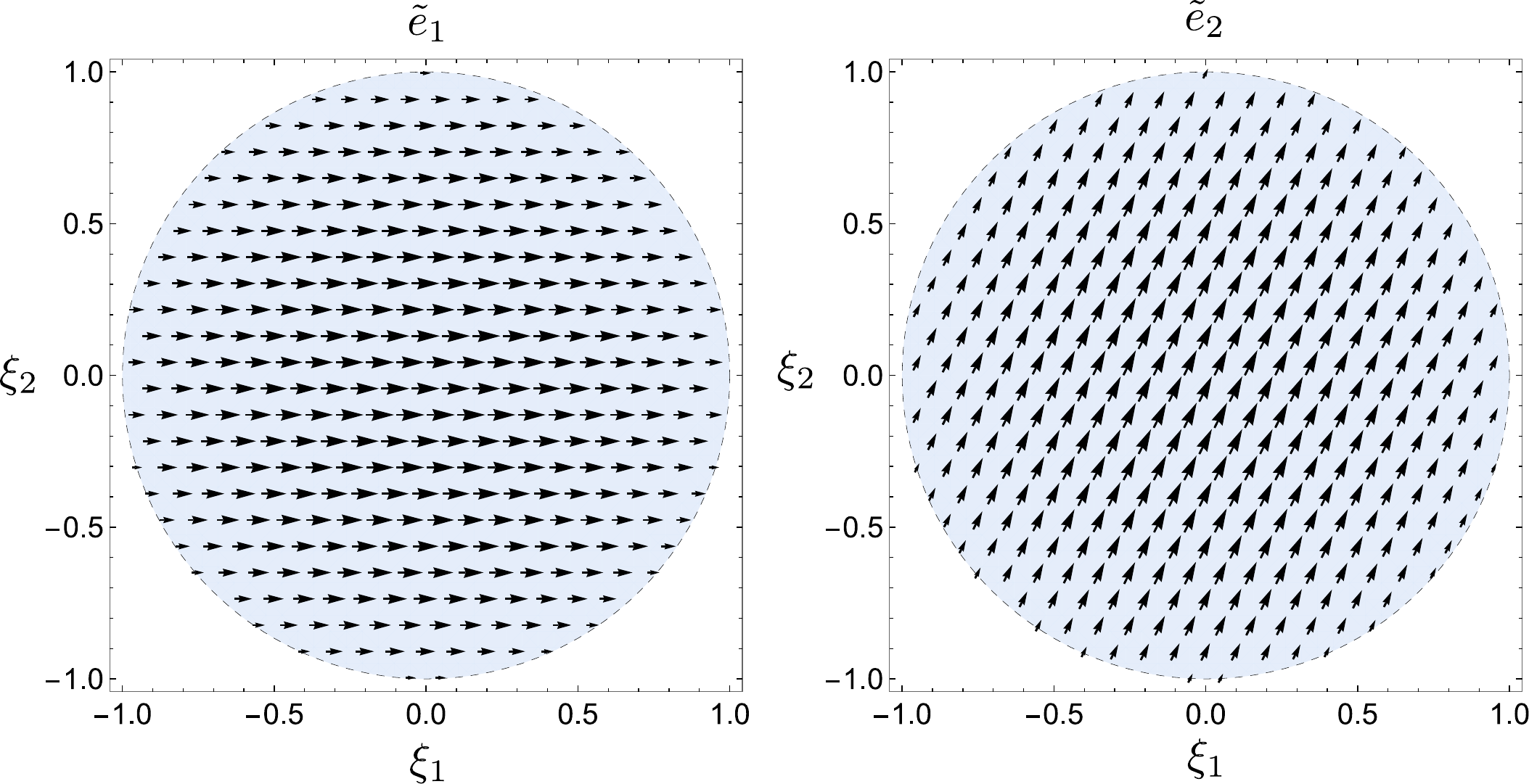}
			\caption{\normalfont An illustration of the tilted vector fields $\tilde{e}_{i}$ for Example \ref{exmp:sphere_exmp_deception} in the $\xi$-coordinates. The arrows' sizes represent the magnitude of the vector field.}
			\label{fig:S2_example_vectorfields_tilted}
			\vspace{-0.2cm}
		\end{figure}
		As observed, the vector field $\tilde{e}_2$ is tilted compared to Figure \ref{fig:sphere_example_vectorfields}.
		Repeating the same computations as in Example \ref{exmp:sphere_exmp_no_deception}, we find that the new frame $\{\tilde{e}_i\}$, with $\tilde{e}_1=e_1$ and $\tilde{e}_2$ as above, still spans the tangent space of the submanifold $\mathcal{U}$. In particular, we have that $dJ_1[e_1](x)= -x_1$, $dJ_2[\tilde{e}_{2}]= -x_2 - \gamma (x_1-x_3)$, so that the only equilibrium in $\mathcal{U}$ is given, as a function of the deception parameter $\gamma$, by
		\begin{align}\label{eq:s2_eqbm_gamm}
			X^\star(\gamma) = \left(0,\frac{\gamma}{\sqrt{1+\gamma^2}},\frac{1}{\sqrt{1+\gamma^2}}\right).
		\end{align}
		Next, we proceed to verify the properties of $X^\star(\gamma)$.
		By Theorem \ref{thm:deception_stability_IFT}, there exists an open neighborhood $\Omega \subset\mathbb{R}$ with $0\in \Omega$ such that, for all $\gamma\in\Omega$, $X^\star(\gamma)$ is locally exponentially stable. Nevertheless, because we have an explicit expression of the equilibrium point, we can obtain stronger results. To do so, we employ the frame $\{\tilde{e}_1,\tilde{e}_2\}$, which is orthonormal with respect to the tilted metric $\tilde{g}$ of Corollary \ref{cor:tilted_metric}. In particular, we compute that $d(dJ_1[\tilde{e}_1])[\tilde{e}_1] = - x_3$, $d(dJ_1[\tilde{e}_1])[\tilde{e}_2] = -\gamma  x_3$, $d(dJ_2[\tilde{e}_2])[\tilde{e}_1] = -\gamma  (x_1+x_3)$, $d(dJ_2[\tilde{e}_2])[\tilde{e}_2]= - \gamma (\gamma x_1+x_2)-(1+\gamma^2)x_3$. Hence, substituting from \eqref{eq:s2_eqbm_gamm}, we see that the matrix representation of $A|_{X^\star(\gamma)}$ with respect to $\{\tilde{e}_i\}$ is
		\begin{align*}
			A|_{X^\star(\gamma)} = \frac{1}{\sqrt{1+\gamma^2}} \begin{bmatrix} -1& -\gamma \\ -\gamma & -(1+2\gamma^2)\end{bmatrix}.
		\end{align*}
		Note that the diagonal entries of $A|_{X^\star(\gamma)}$ are strictly negative so that the point $X^\star(\gamma)$ is a strict local leaf-wise Nash equilibrium for all admissible $\gamma$. To verify that $A|_{X^\star(\gamma)} $ is an automorphism of $T_{X^\star(\gamma)}\mathcal{U}$ and Hurwitz, we compute its trace and determinant as $\mathrm{trace}(A|_{X^\star(\gamma)}) = -2\sqrt{1+\gamma^2}$ and $\mathrm{det}(A|_{X^\star(\gamma)})= 1$, which, since it is a $2\times 2$ matrix, imply that its eigenvalues are strictly negative, and, therefore, $X^\star(\gamma)$ is a non-degenerate isolated exponentially stable equilibrium for all $\gamma\in\mathbb{R}$. In conclusion, deception preserves the stability of the new equilibrium point \eqref{eq:s2_eqbm_gamm}, which is controlled by the deceiver via the parameter $\gamma$. We also note that differentiating \eqref{eq:s2_eqbm_gamm} at $\gamma=0$ gives $\frac{d}{d\gamma}X^\star(0) = (0,1,0) = e_2(x^\star)$, in agreement with Corollary \ref{cor:sensitivity}, since $dJ_2[e_1](x^\star)=1$ and $A|_{x^\star}=-I$. \hfill $\square$
	\end{exmp}
	\medskip
	\begin{exmp}[Example \ref{exmp:simplex_exmp_no_deception} cont'd]\label{exmp:simplex_exmp_deception}
		We consider the case in which the $1^{\text{st}}$-agent implements a deceptive strategy towards the $2^{\text{nd}}$-agent in the setting of Example \ref{exmp:simplex_exmp_no_deception}. In this case, the vector fields $\{e_i\}$ are replaced by $\tilde{e}_1(x) := e_1(x)$,  $\tilde{e}_2(x) := e_2(x) + \gamma\, e_1(x)$, where $\gamma \in \Real$ is the deception parameter. The new vector fields are depicted in Figure \ref{fig:P2_example_vectorfields_tilted} for $\gamma = \frac{1}{2}$. As can be observed from the figure, the vector field $\tilde{e}_2$ is tilted compared to Figure \ref{fig:P2_exmp_vectorfields}.
		\begin{figure}[t]
			\centering
			\includegraphics[width=0.8\linewidth]{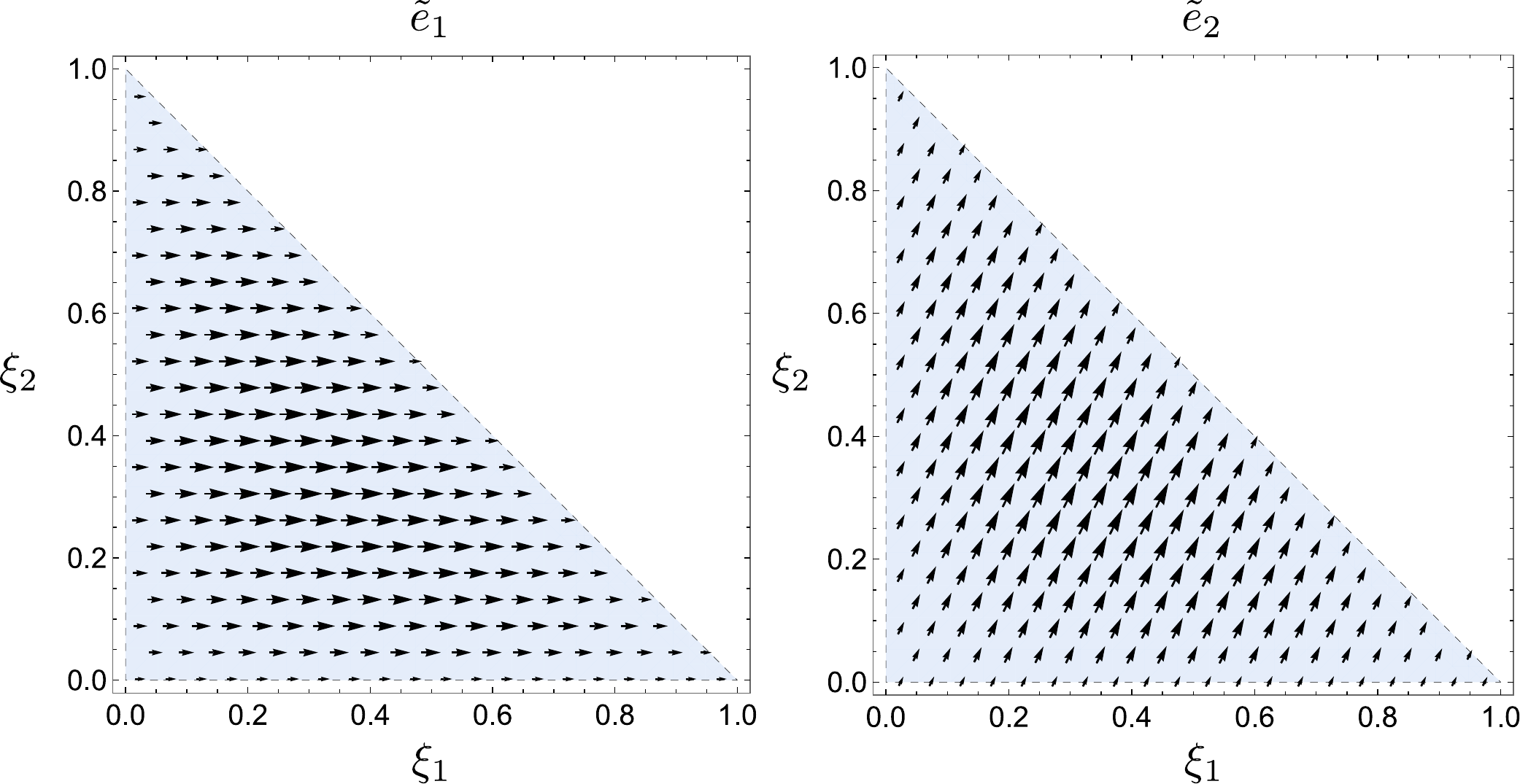}
			\caption{\normalfont An illustration of the tilted vector fields $\tilde{e}_{i}$ for Example \ref{exmp:simplex_exmp_deception} in the $\xi$-coordinates. The arrows' sizes represent the magnitude of the vector field.}
			\label{fig:P2_example_vectorfields_tilted}
		\end{figure}
		We compute that $dJ_2[e_1](x) = x_1 x_2 x_3\,(x_3 - x_1)$, and, consequently, $dJ_1[\tilde{e}_1](x) = x_1 x_2 x_3\,(-x_1 + x_3 + 1 - 3\alpha)$, $dJ_2[\tilde{e}_2](x) = x_1 x_2 x_3\big[(-x_2 + x_3 + 1 - 3\alpha) + \gamma(x_3 - x_1)\big]$, where we utilized \eqref{eq:dJ_i}. On $\Dom$ we have $x_1 x_2 x_3 > 0$, so the equilibrium conditions reduce to the two linear equations $-x_1 + x_3 + 1 - 3\alpha = 0$, $(-x_2 + x_3 + 1 - 3\alpha) + \gamma(x_3 - x_1) = 0$, together with the simplex constraint $x_1 + x_2 + x_3 = 1$. Introducing $\beta := 1 - 3\alpha$, the first equation gives $x_1 = x_3 + \beta$, so that $x_3 - x_1 = -\beta$. Substituting into the second and simplifying yields $x_2 = x_3 + \beta(1 - \gamma)$. Therefore, the simplex constraint determines $x_3 = \frac{(6\alpha - 1) + \gamma(1 - 3\alpha)}{3}$. From the above, we obtain that
		\begin{equation}
			X^\star(\gamma) = \frac{1}{3}\begin{pmatrix} (2 - 3\alpha) + \gamma(1 - 3\alpha) \\ (2 - 3\alpha) - 2\gamma(1 - 3\alpha) \\ (6\alpha - 1) + \gamma(1 - 3\alpha) \end{pmatrix}. \label{eq:eq}
		\end{equation}
		In particular, $X^\star(0)$ recovers the nominal equilibrium from Example \ref{exmp:simplex_exmp_no_deception}, and the $\gamma$-dependent shift $X^\star(\gamma) - X^\star(0) = \frac{\gamma (1 - 3\alpha)}{3}\,(1,\, -2,\, 1)$ is exactly linear in $\gamma$. For $X^\star(\gamma)$ to remain in the interior $\Dom$, we require $\gamma \in \Omega$, where $\Omega \subset \Real$ is the open interval
		\begin{equation}
			\Omega = \left\{\gamma \in \Real \;:\; X^\star_i(\gamma) > 0 \text{ for } i = 1, 2, 3 \right\}. \label{eq:Omega}
		\end{equation}
		The three positivity constraints in \eqref{eq:Omega} read $(2 - 3\alpha) + \gamma(1 - 3\alpha) > 0$,
		$(2 - 3\alpha) - 2\gamma(1 - 3\alpha) > 0$, $(6\alpha - 1) + \gamma(1 - 3\alpha)> 0$. For $\alpha \in (1/6, 1/3)$, these give that $\Omega = \left(\frac{1 - 6\alpha}{1 - 3\alpha},\; \frac{2 - 3\alpha}{2(1 - 3\alpha)}\right)$, which contains $0$ and is bounded on both sides. For $\alpha = 1/3$, the map $\gamma \mapsto X^\star(\gamma)$ is constant ($\beta = 0$), and deception has no effect on the equilibrium. For $\alpha \in (1/3, 2/3)$ (so $1 - 3\alpha < 0$), the inequalities flip direction and $\Omega = \left(\frac{2 - 3\alpha}{2(1 - 3\alpha)},\; \frac{1 - 6\alpha}{1 - 3\alpha}\right)$. In both non-degenerate regimes, $\Omega$ is an explicit, bounded, open interval containing $0$, giving a quantitative counterpart to the qualitative neighborhood guaranteed by Theorem \ref{thm:deception_stability_IFT}.
		
		\medskip
		
		To verify local optimality and stability of $X^\star(\gamma)$, we compute the intrinsic Jacobian $A|_{X^\star(\gamma)}$ with respect to the frame $\{\tilde{e}_1, \tilde{e}_2\}$, which is orthonormal with respect to the tilted metric $\tilde{g}$, via Theorem \ref{thm:jacobian_simplification}. Observe that each directional derivative above factors as $\chi \cdot \rho$, where $\chi := x_1 x_2 x_3$ and $\rho$ is a polynomial of degree $1$ in $x$. At $X^\star(\gamma)$, the linear factor $\rho$ vanishes by construction of the equilibrium, while $\chi|_{X^\star(\gamma)} > 0$. Hence, using $\tilde{e}_1 = \chi(\partial_1 - \partial_3)$ and $\tilde{e}_2 = \chi (\gamma\partial_1 + \partial_2 - (1+\gamma)\partial_3)$,
		\begin{align*}
			d(dJ_i[\tilde{e}_i])[\tilde{e}_j]\big|_{X^\star(\gamma)} &= \chi\cdot \tilde{e}_j[\rho_i]\big|_{X^\star(\gamma)},
		\end{align*}
		where $\rho_1 = -x_1 + x_3 + \beta$ and $\rho_2 = (-x_2 + x_3 + \beta) + \gamma(x_3 - x_1)$. Direct computation gives $d(dJ_1[\tilde{e}_1])[\tilde{e}_1]= -2\,\chi^2$, $d(dJ_1[\tilde{e}_1])[\tilde{e}_2]= -(1 + 2\gamma)\,\chi^2$,
		$d(dJ_2[\tilde{e}_2])[\tilde{e}_1]= -(1 + 2\gamma)\,\chi^2$,
		$d(dJ_2[\tilde{e}_2])[\tilde{e}_2]= -2(1 + \gamma + \gamma^2)\,\chi^2$. Hence, substituting from~\eqref{eq:eq}, the matrix representation of $A|_{X^\star(\gamma)}$ with respect to $\{\tilde{e}_i\}$ is
		\begin{equation}
			A|_{X^\star(\gamma)} = \chi^2\big|_{X^\star(\gamma)} \begin{bmatrix} -2 & -(1 + 2\gamma) \\ -(1 + 2\gamma) & -2(1 + \gamma + \gamma^2) \end{bmatrix}. \label{eq:A}
		\end{equation}
		In particular, at $\gamma = 0$, the scalar $\chi^2\big|_{x^\star} = \tfrac{1}{729}(2 - 3\alpha)^4 (6\alpha - 1)^2$ and the matrix~\eqref{eq:A} reduces to the expression in Example \ref{exmp:simplex_exmp_no_deception_2}. Note that the diagonal entries here are once more strictly negative so that the point $X^\star(\gamma)$ is a strict local leaf-wise Nash equilibrium for all admissible $\gamma$. Next, we verify that $A|_{X^\star(\gamma)}$ is Hurwitz by computing its trace and determinant:
		\begin{align*}
			\operatorname{tr}\!\big(A|_{X^\star(\gamma)}\big) &= -2\,\chi^2\big|_{X^\star(\gamma)}\cdot (2 + \gamma + \gamma^2), \\
			\det\!\big(A|_{X^\star(\gamma)}\big) &= \chi^4\big|_{X^\star(\gamma)} \cdot \Big[4(1 + \gamma + \gamma^2) - (1 + 2\gamma)^2\Big] \\
			&=3\,\chi^4\big|_{X^\star(\gamma)}.
		\end{align*}
		Since $2 + \gamma + \gamma^2 = (\gamma + 1/2)^2 + 7/4 > 0$ for all $\gamma \in \Real$, the trace is strictly negative for every $\gamma \in \Omega$. The determinant equals $3\,\chi^4|_{X^\star(\gamma)} > 0$ for every $\gamma \in \Omega$. Therefore, $A|_{X^{\star}(\gamma)}$ is Hurwitz and $X^\star(\gamma)$ is a non-degenerate isolated locally exponentially stable equilibrium for system \eqref{eq:liebra_sys_deception}, for every $\gamma \in \Omega$. Thus, deception preserves the stability of the system. 
		$\hfill\square$
	\end{exmp}
	%
	%
	\section{PROOFS}
	\label{sec:proofs}
	%
	\subsubsection{Proof of Theorem \ref{thm:pseduo_grad}}
	Under Assumption \ref{asmp:indpenendent_families}, and for each $x\in\mathcal U$, we have the (pointwise) direct-sum decomposition $T_x\mathcal U = \mathcal E_1(x)\oplus\cdots\oplus \mathcal E_N(x)$, where, thanks to Assumption \ref{asmp:indpenendent_families}, each $\mathcal E_i$ is a regular distribution on $\mathcal U$ generated by the family
	$\{f_{i,j}\}_{j=1}^{d_i}$. Consequently, using annihilators yields the canonical dual decomposition
	\begin{gather}
		T_x^\star\mathcal U = \mathcal E_1(x)^\circ \oplus \cdots \oplus \mathcal E_N(x)^\circ, \label{eq:dual_splitting}\\
		\mathcal E_i(x)^\circ := \mathrm{Ann}\Big(\bigoplus_{k\neq i}\mathcal E_k(x)\Big)\subset T_x^\star\mathcal U.
	\end{gather}
	Let $\pi_i(x):T_x^\star\mathcal U\to \mathcal E_i(x)^\circ$ denote the corresponding linear projections. For each $i\in\{1,\dots,N\}$, define a linear operator
	\begin{equation}\label{eq:Pi_def_new_notation}
		P_i(x)[\omega]
		\;:=\;
		\sum_{j=1}^{d_i} \big(\pi_i(x)\omega\big)\big(f_{i,j}(x)\big)\, f_{i,j}(x),
	\end{equation}
	for all $\omega\in T_x^\star\mathcal U$. By construction, $P_i(x)[\omega]\in \mathcal E_i(x)$ for all $\omega\in T_x^\star\mathcal U$. Moreover, since
	$dJ_i(x)\big(f_{i,j}(x)\big)=L_{f_{i,j}}(J_i)(x)$, system \eqref{eq:liebra_system} can be rewritten as
	\begin{equation}\label{eq:system_Pi_form_new_notation}
		\dot x = \sum_{i=1}^N\kappa_{i}  P_i(x)\big[dJ_i(x)\big].
	\end{equation}
	We now construct a metric $g$ on $\mathcal U$ via its co-metric.
	For each $x\in\mathcal U$, we define a
	symmetric bilinear form $g_x^{-1}$ on $T_x^\star\mathcal U$ by declaring it block-diagonal with respect to the
	splitting \eqref{eq:dual_splitting} and, for $\alpha,\beta\in T_x^\star\mathcal U$ with unique decompositions $\alpha=\sum_{i=1}^N \alpha_i$ and
	$\beta=\sum_{i=1}^N \beta_i$ such that $\alpha_i,\beta_i\in \mathcal E_i(x)^\circ$, defining
	\begin{equation}\label{eq:cometric_def_new_notation}
		\begin{aligned}
			g_x^{-1}(\alpha,\beta)
			&:=
			\sum_{i=1}^N g^{-1}_{i,x}(\alpha_i,\beta_i),\\
			g^{-1}_{i,x}(\alpha_i,\beta_i)
			&:=
			\sum_{j=1}^{d_i} \alpha_i\big(f_{i,j}(x)\big)\,\beta_i\big(f_{i,j}(x)\big).
		\end{aligned}
	\end{equation}
	%
	Since $\{f_{i,j}(x)\}_{j=1}^{d_i}$ spans $\mathcal E_i(x)$, the quadratic form
	$\alpha_i\mapsto g^{-1}_{i,x}(\alpha_i,\alpha_i)=\sum_{j=1}^{d_i}\alpha_i(f_{i,j}(x))^2$ is positive definite on
	$\mathcal E_i(x)^\circ$. Hence, $g_x^{-1}$ is positive definite on $T_x^\star\mathcal U$ for every $x\in\mathcal U$.
	Smoothness follows from the smoothness of the vector fields $f_{i,j}$ and of the projections $\pi_i(x)$ for regular
	distributions. Therefore, the tensor $g^{-1}$ defines a Riemannian co-metric on $\mathcal U$, and, consequently, a unique Riemannian
	metric $g$.
	
	Next, let $g_x^\sharp:T_x^\star\mathcal U\to T_x\mathcal U$ denote the tangent-cotangent isomorphism induced by $g$, i.e., $\alpha(v)=g_x\big(g_x^\sharp\alpha,v\big)$ for all $\alpha\in T_x^\star\mathcal U,\ v\in T_x\mathcal U$. By construction of \eqref{eq:cometric_def_new_notation}, we have the block formula
	\begin{equation}\label{eq:sharp_block_new_notation}
		g_x^\sharp\alpha = \sum_{i=1}^N P_i(x)[\alpha_i],
	\end{equation}
	where $\alpha=\sum_{i=1}^N \alpha_i,\ \alpha_i\in \mathcal E_i(x)^\circ$ is the unique decomposition of $\alpha$ with respect to the splitting \eqref{eq:dual_splitting}.
	For each $i$, the Riemannian gradient of the function $J_i$ with respect to the metric $g$ is defined by $\grad(J_i)(x):=g_x^\sharp\,(dJ_i(x))$. Utilizing the splitting \eqref{eq:dual_splitting} once more, we have the unique decomposition
	$dJ_i(x)=\sum_{k=1}^N (dJ_i(x))_k$ with $(dJ_i(x))_k\in \mathcal E_k(x)^\circ$, which, in combination with \eqref{eq:sharp_block_new_notation}, implies that $\grad(J_i)(x)=\sum_{k=1}^N P_k(x)\big[(dJ_i(x))_k\big]$. Moreover, since $g_x^{-1}$ is block-diagonal with respect to the splitting \eqref{eq:dual_splitting}, it follows that the decomposition \eqref{eq:tangent_splitting} is $g_x$-orthogonal, and, therefore, the $g_x$-orthogonal projection $P_{\mathcal E_i}(x):T_x\mathcal U\to \mathcal E_i(x)$ is precisely the extraction of the $i$-th component. Hence
	\[
	\grad_{\mathcal{E}_i}(J_i)(x):=P_{\mathcal E_i}[\grad(J_i)](x)
	=
	P_i(x)\big[(dJ_i(x))_i\big].
	\]
	Finally, since $f_{i,j}(x)\in \mathcal E_i(x)$ we have that $(dJ_i(x))_i(f_{i,j}(x))=dJ_i(x)(f_{i,j}(x))$, and, therefore, it follows that $\grad_{\mathcal{E}_i}(J_i)(x)=\sum_{j=1}^{d_i} L_{f_{i,j}}(J_i)(x)\,f_{i,j}(x)$. Combining this identity with \eqref{eq:system_Pi_form_new_notation} proves that system~\eqref{eq:liebra_system} is of the form \eqref{eq:projected_grad_form}, which concludes the proof.
	\subsubsection{Proof of Proposition \ref{prop:weighted-block-gershgorin-riem}}
	Let
	$
	X:=\mathrm{diag}(\chi_1 I_1,\dots,\chi_N I_N),
	$
	where \(I_i\) is the identity on \(\mathcal E_i(x^\star)\), and define the operator
	$
	\tilde A:=X^{-1}AX,
	$
	which is similar to $A$, i.e., has the same spectrum. Direct computation shows that its blocks with respect to the splitting \eqref{eq:tangent_splitting} are
	$
	\tilde A_{ii}=A_{ii},
	$
	and
	$
	\tilde A_{ij}=\frac{\chi_j}{\chi_i}A_{ij}.
	$
	Therefore,
	\[
	\tilde R_i
	:=
	\sum_{j\neq i}\|\tilde A_{ij}\|_{g}
	=
	\sum_{j\neq i}\|A_{ij}\|_{g}\frac{\chi_j}{\chi_i},
	\]
	with respect to \(\|\cdot\|_{g}\).
	By the block Gershgorin circle theorem \cite{feingold1962block}, each
	\(\lambda\in\sigma(\tilde A)\) satisfies, for some \(i\),
	\begin{align}\label{eq:gersh}
		\|(\tilde A_{ii}-\lambda I_i)^{-1}\|_{g}^{-1}
		\le \tilde R_i.
	\end{align}
	Since \(\tilde A_{ii}=A_{ii}\) and $
	\|M^{-1}\|_{g}^{-1}=\sigma_{\min}^g(M),$ we obtain that $\sigma_{\min}^g(A_{ii}-\lambda I_i)\le \tilde R_i.$
	To arrive at a contradiction, suppose that, for some $\lambda\in\sigma(\tilde{A})$, we have that \(\Re(\lambda)\ge 0\). Then, we would have that
	\[
	\sigma_{\min}^g(A_{ii}-\lambda I_i)
	\ge
	\inf_{\Re (s)\ge 0}\sigma_{\min}^g(A_{ii}-sI_i)
	>
	\tilde R_i
	\]
	from \eqref{eq:wbg-riem}, which contradicts \eqref{eq:gersh}. Hence, $\sigma(\tilde A)\subset\{\Re(z)<0\}$ and, because \(\sigma(A)=\sigma(\tilde A)\), \(A\) is Hurwitz.
	\subsubsection{Proof of Theorem \ref{thm:jacobian_simplification}}
	Recall from \eqref{eq:jacobian_block_form} that
	\begin{align}
		A_{ij}[\,\cdot\,] &= P_{\Ei_i}\left[\nabla_{P_{\Ei_j}[\,\cdot\,]}\,\grad(J_i)\right] + C_{ij}[\,\cdot\,], \label{eq:Aij}\\
		C_{ij}[\,\cdot\,] &= \sum_{k=1}^{N} P_{\Ei_i}\left[\big(\nabla_{P_{\Ei_j}[\,\cdot\,]} P_{\Ei_k}\big)[\grad(J_k)]\right]. \label{eq:Cij}
	\end{align}
	We evaluate $A_{ij}[v]$ for an arbitrary vector field $v$ and then extract its components in the orthonormal frame $\{f_{i,j}\}$.
	
	\medskip
	
	\noindent From~\eqref{eq:metric-proj} and the product rule, we compute that
	\[
	\nabla P_{\Ei_k} = \sum_{\ell=1}^{d_k}\Big( \nabla \vartheta^{k,\ell} \otimes f_{k,\ell} + \vartheta^{k,\ell} \otimes \nabla f_{k,\ell}\Big).
	\]
	Since $P_{\Ei_j}[v] = \sum_{q=1}^{d_j} \vartheta^{j,q}[v]\, f_{j,q}$, we obtain that
	\begin{equation}\label{eq:nabla-P}
		\begin{aligned}
			\nabla_{P_{\Ei_j}[v]} P_{\Ei_k} = \sum_{q=1}^{d_j} \vartheta^{j,q}[v]\, &\sum_{\ell=1}^{d_k}\Big( \vartheta^{k,\ell}\otimes \nabla_{f_{j,q}} f_{k,\ell} \\
			&+ (\nabla_{f_{j,q}} \vartheta^{k,\ell}) \otimes f_{k,\ell}\Big).
		\end{aligned}
	\end{equation}
	
	\medskip
	
	\noindent From the definition of the covariant derivative of a $1$-form,
	\[
	(\nabla_{f_{j,q}} \vartheta^{k,\ell})[\,\cdot\,] = d\big(\vartheta^{k,\ell}[\,\cdot\,]\big)[f_{j,q}]- \vartheta^{k,\ell}\big[\nabla_{f_{j,q}}[\,\cdot\,]\big].
	\]
	Applying this to $\grad(J_k)$ and utilizing \eqref{eq:dual} and \eqref{eq:grad},
	\begin{equation*}
		\begin{aligned}
			(\nabla_{f_{j,q}} \vartheta^{k,\ell})[\grad(J_k)] &= d\big(dJ_k[f_{k,\ell}]\big)[f_{j,q}] \\
			&- \vartheta^{k,\ell}\big[\nabla_{f_{j,q}}\grad(J_k)\big].
		\end{aligned}
	\end{equation*}
	Utilizing \eqref{eq:grad}, the product rule, and \eqref{eq:dual}:
	\begin{equation*}
		\begin{aligned}
			\vartheta^{k,\ell}\big[\nabla_{f_{j,q}}\grad(J_k)\big]
			&= d\big(dJ_k[f_{k,\ell}]\big)[f_{j,q}] \\
			&+ \sum_{m,r} dJ_k[f_{m,r}]\, \vartheta^{k,\ell}[\nabla_{f_{j,q}} f_{m,r}].
		\end{aligned}
	\end{equation*}
	Hence, we obtain that
	\begin{equation}
		(\nabla_{f_{j,q}} \vartheta^{k,\ell})[\grad(J_k)] = -\sum_{m,r} dJ_k[f_{m,r}]\, \vartheta^{k,\ell}[\nabla_{f_{j,q}} f_{m,r}]. \label{eq:dtheta-grad}
	\end{equation}
	
	\medskip
	
	\noindent Substituting~\eqref{eq:nabla-P} into~\eqref{eq:Cij} and applying $P_{\Ei_i}$, which extracts components along $\{f_{i,p}\}_{p=1}^{d_i}$ via \eqref{eq:metric-proj}, we obtain:
	\begin{align*}
		P_{\Ei_i}&\left[\big(\nabla_{P_{\Ei_j}[v]} P_{\Ei_k}\big)[\grad(J_k)]\right]\\
		&= \sum_{q,p} \left(\sum_{\ell} dJ_k[f_{k,\ell}]\, \vartheta^{i,p}[\nabla_{f_{j,q}} f_{k,\ell}]\right) \vartheta^{j,q}[v] f_{i,p} \\
		&\quad + \sum_{q,\ell} (\nabla_{f_{j,q}} \vartheta^{k,\ell})[\grad(J_k)] \vartheta^{j,q}[v] P_{\Ei_i}[f_{k,\ell}].
	\end{align*}
	Using $P_{\Ei_i}[f_{k,\ell}] = \delta_{ik}\, f_{i,\ell}$ and substituting~\eqref{eq:dtheta-grad}, the second sum contributes only when $k = i$:
	\[
	-\,\delta_{ik}\sum_{q,\ell}\left(\sum_{m,r} dJ_k[f_{m,r}]\, \vartheta^{k,\ell}[\nabla_{f_{j,q}} f_{m,r}]\right) \vartheta^{j,q}[v]f_{i,\ell}.
	\]
	Summing over $k$ and relabeling $\ell \mapsto p$, we obtain that
	\begin{equation}\label{eq:Cij-expanded}
		\begin{aligned}
			C_{ij}[v] = \sum_{q,p}&\bigg(\sum_{k,\ell}\big(dJ_k[f_{k,\ell}]\\ &- dJ_i[f_{k,\ell}]\big)\,\vartheta^{i,p}[\nabla_{f_{j,q}} f_{k,\ell}]\bigg)\vartheta^{j,q}[v] f_{i,p}.
		\end{aligned}
	\end{equation}
	From~\eqref{eq:grad} and the product rule,
	\begin{equation*}
		\begin{aligned}
			\nabla_{f_{j,q}}\grad(J_i) = \sum_{k,\ell}\Big( d\big(&dJ_i[f_{k,\ell}]\big)[f_{j,q}]\, f_{k,\ell} \\
			&+ dJ_i[f_{k,\ell}]\, \nabla_{f_{j,q}} f_{k,\ell}\Big).
		\end{aligned}
	\end{equation*}
	Extracting the $(i,p)$-component with $\vartheta^{i,p} \circ P_{\Ei_i}$ and using~\eqref{eq:dual},
	\begin{equation}\label{eq:first-term}
		\begin{aligned}
			P_{\Ei_i}[&\nabla_{P_{\Ei_j}[v]}\grad(J_i)] = \sum_{q,p}\bigg(d\big(dJ_i[f_{i,p}]\big)[f_{j,q}] \\
			&+ \sum_{k,\ell} dJ_i[f_{k,\ell}]\, \vartheta^{i,p}[\nabla_{f_{j,q}} f_{k,\ell}]\bigg) \vartheta^{j,q}[v]f_{i,p}.
		\end{aligned}
	\end{equation}
	\noindent Adding \eqref{eq:Cij-expanded} and \eqref{eq:first-term} and canceling identical terms, we get
	\begin{equation}\label{eq:Aij-full}
		\begin{aligned}
			A_{ij}[v] &= \sum_{q,p}\bigg( d\big(dJ_i[f_{i,p}]\big)[f_{j,q}] \\
			&+ \sum_{k,\ell} dJ_k[f_{k,\ell}]\,\vartheta^{i,p}[\nabla_{f_{j,q}} f_{k,\ell}]\bigg) \vartheta^{j,q}[v] f_{i,p}.
		\end{aligned}
	\end{equation}
	\medskip
	
	\noindent By condition \eqref{eq:eqbm_uncoupled} and~\eqref{eq:grad}, $\grad_{\Ei_k}(J_k)(x^\star) = 0$ implies $
	dJ_k[f_{k,\ell}]\big|_{x^\star} = 0$ for all $k, \ell$. Hence the inner sum in~\eqref{eq:Aij-full} vanishes at $x^\star$, leaving
	\[
	A_{ij}[v]\big|_{x^\star} = \sum_{q,p} d\big(dJ_i[f_{i,p}]\big)[f_{j,q}]\, \vartheta^{j,q}[v] f_{i,p}\big|_{x^\star}.
	\]
	Reading off the matrix entries with respect to the orthonormal frame gives the first equality in~\eqref{eq:block-entries}. Finally, using the identity $\hess(J_i)[X, Y] = d(dJ_i[Y])[X]- dJ_i[\nabla_X Y]$, applied with $X = f_{j,q}$ and $Y = f_{i,p}$,
	\[
	d\big(dJ_i[f_{i,p}]\big)[f_{j,q}] = \hess(J_i)[f_{i,p}, f_{j,q}] + dJ_i\big[\nabla_{f_{j,q}} f_{i,p}\big],
	\]
	establishing the second equality in~\eqref{eq:block-entries}.
	\subsubsection{Proof of Proposition \ref{prop:involutive_dists}}
	By the first equality in \eqref{eq:block-entries},
	$$(A_{ii}\vert_{x^\star})_{p,q} = d(dJ_i[f_{i,p}])[f_{i,q}]\vert_{x^\star}.$$
	The definition of the Hessian implies that
	\begin{equation}
		\label{eq:hess_decomp}
		d(dJ_i[f_{i,p}])[f_{i,q}]
		= \hess(J_i)[f_{i,q}, f_{i,p}]
		+ dJ_i[\nabla_{f_{i,q}} f_{i,p}].
	\end{equation}
	Since $\mathcal{U}_i$ is a weakly embedded submanifold,
	$$\nabla_{f_{i,q}} f_{i,p} = \nabla^i_{f_{i,q}} f_{i,p} + \Pi^i(f_{i,q}, f_{i,p}),$$
	where $\nabla^i_{f_{i,q}} f_{i,p} \in \mathcal{E}_i$, and $\Pi^i$ is the second fundamental form. The equilibrium condition \eqref{eq:eqbm_uncoupled} gives $dJ_i[f_{i,r}]\vert_{x^\star} = 0$ for all $r$, so
	$dJ_i[\nabla^i_{f_{i,q}} f_{i,p}]\vert_{x^\star} = 0$,
	and hence $dJ_i[\nabla_{f_{i,q}} f_{i,p}]\vert_{x^\star} = dJ_i[\Pi^i(f_{i,q}, f_{i,p})]\vert_{x^\star}$. From \cite[Proposition 1.2]{dajczer2019submanifold}, which is obtained by applying the Gauss equation of Riemannian geometry to the Hessian, we obtain that
	\begin{equation}
		\begin{aligned}
			\hess(J_i^{\mathcal{U}_i})[f_{i,q}, f_{i,p}] = \hess^i(&J_i)[f_{i,q}, f_{i,p}]\\ &- dJ_i[\Pi^i(f_{i,q}, f_{i,p})].
		\end{aligned}
	\end{equation}
	Substituting into~\eqref{eq:hess_decomp}, the second fundamental form contributions cancel, and we obtain $d(dJ_i[f_{i,p}])[f_{i,q}]\big\vert_{x^\star}= \hess^i(J_i^{\mathcal{U}_i})[f_{i,p}, f_{i,q}]\big\vert_{x^\star}.$
	\subsubsection{Proof of Proposition \ref{prop:commuting_vectorfields}}
	Since $\{f_{i,j}\}$ is a global orthonormal frame on $\mathcal{U}$ with respect to $g$ (see the proof of Theorem \ref{thm:pseduo_grad}), we have $g(f_{i,p}, f_{k,r}) = \delta_{ik}\delta_{pr}$, which is constant on $\mathcal{U}$. Therefore, all Lie-derivative
	terms in the Koszul formula \cite[Corollary 5.11]{lee2018introduction} vanish identically, and for any $k \in \{1,\ldots,N\}$ and $r \in \{1,\ldots,d_k\}$, we have that
	\begin{equation}
		\label{eq:koszul_reduced}
		\begin{aligned}
			2g(\nabla_{f_{j,q}} &f_{i,p},\, f_{k,r})
			= -g\!\left(f_{j,q},\, [f_{i,p}, f_{k,r}]\right)\\
			&+ g\!\left(f_{i,p},\, [f_{k,r}, f_{j,q}]\right)+ g\!\left(f_{k,r},\, [f_{j,q}, f_{i,p}]\right).
		\end{aligned}
	\end{equation}
	Fixing $i \neq j$, the last term vanishes for all $k$, since, by Assumption \ref{asmp:commuting_vectorfields},
	$[f_{j,q}, f_{i,p}] = 0.$
	Next, we consider each possible value for $k$:
	\begin{itemize}
		\item For $k \notin \{i,j\}$, Assumption \ref{asmp:commuting_vectorfields} gives that
		$[f_{i,p}, f_{k,r}] = 0, \quad [f_{k,r}, f_{j,q}] = 0,$
		so the RHS in \eqref{eq:koszul_reduced} vanishes.
		\item For $k = i$, $[f_{i,p}, f_{k,r}] \in \mathcal{E}_i$ by Assumption \ref{asmp:involutive_distributions},
		so
		$g(f_{j,q}, [f_{i,p}, f_{i,r}]) = 0,$
		since $\mathcal{E}_j \perp_g \mathcal{E}_i$ (by construction), and, by
		Assumption \ref{asmp:commuting_vectorfields},
		$[f_{i,r}, f_{j,q}] = 0$.
		\item For $k = j$,
		Assumption \ref{asmp:commuting_vectorfields} gives that
		$[f_{i,p}, f_{k,r}] = 0$,
		whereas, by
		Assumption \ref{asmp:involutive_distributions}, we have that
		$[f_{j,r}, f_{j,q}] \in \mathcal{E}_j$, and, therefore $g(f_{i,p}, [f_{j,r}, f_{j,q}]) = 0$,
		since $\mathcal{E}_i \perp_g \mathcal{E}_j$ (by construction).
	\end{itemize}
	From the above, it follows that $g(\nabla_{f_{j,q}} f_{i,p}, f_{k,r}) = 0,$
	for all $k,r$, and, since $\{f_{k,r}\}$ is a global frame, \eqref{eq:vanishing_connection} follows.
	\subsubsection{Proof of Proposition \ref{propLNE}} Fix an arbitrary $i\in\{1,\dots,N\}$. By Assumption \ref{asmp:involutive_distributions}, the distribution $\mathcal{E}_i$ is involutive.
	Therefore, the Frobenius Theorem guarantees that
	$\mathcal{U}_i^\star=\mathcal{U}_i(x^\star)$ is the maximal connected integral manifold of $\mathcal{E}_i$ through $x^\star$ and satisfies \eqref{leaf}. We equip $\mathcal{U}_i^\star$ with the Riemannian metric
	$g^i$ induced by the ambient metric $g$.
	
	We first show that \eqref{eq:eqbm_uncoupled} is
	equivalent to the first-order stationarity of
	$J_i|_{\mathcal{U}_i^\star}$ at $x^\star$. Indeed, for every
	$v_i\in T_{x^\star}\mathcal{U}_i^\star=\mathcal{E}_i(x^\star)$, we have
	\[
	\begin{aligned}
		d\big(&J_i|_{\mathcal{U}_i^\star}\big)(x^\star)[v_i]
		=
		dJ_i(x^\star)[v_i] =
		g\bigl(\operatorname{grad}(J_i)(x^\star),v_i\bigr)\\
		&=
		g\bigl(
		P_{\mathcal{E}_i}\operatorname{grad}(J_i)(x^\star),
		v_i
		\bigr)=
		g^i\bigl(
		\operatorname{grad}_{\mathcal{E}_i}(J_i)(x^\star),
		v_i
		\bigr).
	\end{aligned}
	\]
	Thus, \eqref{eq:eqbm_uncoupled} implies $d\big(J_i|_{\mathcal{U}_i^\star}\big)(x^\star)=0$. Next, we apply the second-order sufficient condition on the
	Riemannian manifold $\mathcal{U}_i^\star$. Choose a smooth coordinate chart $\varphi_i:\mathcal{O}_i'\longrightarrow
	\varphi_i(\mathcal{O}_i')\subset\mathbb{R}^{d_i}$ on $\mathcal{U}_i^\star$ such that
	$x^\star\in\mathcal{O}_i'$ and
	$\varphi_i(x^\star)=0$. Define the coordinate representation $h_i
	:=
	\big(J_i|_{\mathcal{U}_i^\star}\big)\circ\varphi_i^{-1}$. Since $d\big(J_i|_{\mathcal{U}_i^\star}\big)(x^\star)=0$, it follows that $Dh_i(0)=0$. At a critical point, the coordinate representation of the
	Riemannian Hessian coincides with the ordinary second
	derivative because the connection-dependent terms are
	multiplied by the first derivative, which vanishes at the
	critical point. Therefore, condition
	\eqref{eq:leafwise_second_order} implies that the second Fr\'echet derivative of $h_i$ satisfies $z^\top D^2 h_i(0)z<0$, for all $z\in\mathbb{R}^{d_i}\setminus\{0\}$. Hence, $D^2 h_i(0)$ is negative definite. It follows
	that there exists a constant $c_i>0$ such that $z^\top D^2 h_i(0)z
	\leq -2c_i\|z\|^2$, for all $z\in\mathbb{R}^{d_i}$. By continuity of $D^2 h_i$, there exists $r_i>0$
	such that $B_{r_i}(0)$ is contained in
	$\varphi_i(\mathcal{O}_i')$ and $z^\top D^2 h_i(\xi)z
	\leq -c_i\|z\|^2$, for every $\xi\in B_{r_i}(0)$ and every
	$z\in\mathbb{R}^{d_i}$. For any $z\in B_{r_i}(0)$, Taylor's theorem with integral
	remainder and $Dh_i(0)=0$ give
	\[
	\begin{aligned}
		h_i(z)-h_i(0)
		&\leq
		-c_i\|z\|^2\int_0^1(1-\tau)\,d\tau=
		-\frac{c_i}{2}\|z\|^2.
	\end{aligned}
	\]
	Consequently, $h_i(z)<h_i(0)$ for all $z\in B_{r_i}(0)\setminus\{0\}$. Defining $\mathcal{O}_i
	:=
	\varphi_i^{-1}\bigl(B_{r_i}(0)\bigr)$, we obtain a neighborhood of $x^\star$ in
	$\mathcal{U}_i(x^\star)$ such that $J_i(y)<J_i(x^\star)$, for all $y\in\mathcal{O}_i\setminus\{x^\star\}$. Because $i$ was arbitrary, this property holds for every
	player. Therefore, $x^\star$ is a strict local leafwise Nash
	equilibrium. \hfill $\blacksquare$
	\subsubsection{Proof of Proposition \ref{standardNE}} We show that, under Assumption~\ref{asmp:commuting_vectorfields}, a strict local leafwise Nash equilibrium is precisely a strict local Nash equilibrium in the local product coordinates, and then invoke Proposition~\ref{propLNE}. By Assumption~\ref{asmp:indpenendent_families}, the tangent bundle admits the direct-sum decomposition $T\mathcal{U}=\mathcal{E}_1\oplus\cdots\oplus \mathcal{E}_N$. Assumption~\ref{asmp:involutive_distributions} guarantees that every distribution $\mathcal{E}_i$ is
	involutive, while Assumption~\ref{asmp:commuting_vectorfields} guarantees that vector fields
	belonging to distinct distributions commute. Therefore, after
	possibly restricting $\mathcal{U}$ to a sufficiently small neighborhood
	of $x^\star$, which we do not relabel, the Frobenius Theorem yields the
	local diffeomorphism $\phi:\mathcal{U}
	\longrightarrow
	\mathcal{U}_1\times\cdots\times\mathcal{U}_N$ introduced above, which satisfies $\phi_*\mathcal{E}_i=T\mathcal{U}_i$ for all $i\in\{1,\dots,N\}$.
	
	Let $z^\star:=\phi(x^\star)
	=(z_1^\star,\dots,z_N^\star)$ and define $\widehat{J}_i:=J_i\circ\phi^{-1}$. For each player $i$, consider the unilateral payoff function
	\[
	\widehat{J}_i^{\,z_{-i}^\star}:
	\mathcal{U}_i\longrightarrow\mathbb{R},
	\qquad
	\widehat{J}_i^{\,z_{-i}^\star}(z_i)
	:=
	\widehat{J}_i(z_i,z_{-i}^\star).
	\]
	Fix an arbitrary $i\in\{1,\dots,N\}$ and identify the product slice $\big\{(z_i,z_{-i}^\star):z_i\in\mathcal{U}_i\big\}$ with $\mathcal{U}_i$ through its coordinate expression $z_i\mapsto(z_i,z_{-i}^\star)$. Since $\phi_*\mathcal{E}_i=T\mathcal{U}_i$, the map $\iota_i(z_i):=\phi^{-1}(z_i,z_{-i}^\star)$ is a diffeomorphism from $\mathcal{U}_i$ onto an open neighborhood of $x^\star$ in the leaf $\mathcal{U}_i^\star=\mathcal{U}_i(x^\star)$, with $\iota_i(z_i^\star)=x^\star$ and $\widehat{J}_i^{\,z_{-i}^\star}=J_i|_{\mathcal{U}_i^\star}\circ\iota_i$. That is, the slice is the coordinate expression of the leaf, and the unilateral payoff of player $i$ is the coordinate expression of $J_i|_{\mathcal{U}_i^\star}$. Consequently, $z_i^\star$ is a strict local maximizer of $\widehat{J}_i^{\,z_{-i}^\star}$ if and only if $x^\star$ is a strict local maximizer of $J_i|_{\mathcal{U}_i^\star}$. Since $i$ was arbitrary, $x^\star$ is a strict local Nash equilibrium in the local product coordinates if and only if it is a strict local leafwise Nash equilibrium.
	
	It therefore suffices to verify the hypotheses of Proposition~\ref{propLNE}. Assumption~\ref{asmp:involutive_distributions} and condition~\eqref{eq:eqbm_uncoupled} hold by hypothesis. Under Assumptions~\ref{asmp:indpenendent_families}--\ref{asmp:involutive_distributions}, Proposition~\ref{prop:involutive_dists} gives the expression \eqref{hessianexpression} for $A_{ii}|_{x^\star}$, which is the matrix representation, with respect to the orthonormal frame $\{f_{i,p}(x^\star)\}_{p=1}^{d_i}$, of the Riemannian Hessian $\hess^{i}\big(J_i|_{\mathcal{U}_i^\star}\big)(x^\star)$, computed with respect to the metric $g^i$. Hence, condition~\eqref{eq:product_nash_second_order} implies condition~\eqref{eq:leafwise_second_order}, and Proposition~\ref{propLNE} shows that $x^\star$ is a strict local leafwise Nash equilibrium. By the equivalence established above, $x^\star$ is a strict local Nash equilibrium in
	the local product coordinates induced by the distributions
	$\{\mathcal{E}_i\}_{i=1}^N$. \hfill $\blacksquare$
	\subsubsection{Proof of Proposition \ref{prop:tilted_directions_independent}}
	
	First, we fix an order of the global frame as $(f_{1,1},\ldots,f_{1,d_1},f_{2,1},\ldots,f_{2,d_2})$,
	and let $n := d_1 + d_2$. From \eqref{eq:tilted_directions}, the linear map sending $f_{i,p} \mapsto \tilde{f}_{i,p}$ has the matrix representation $I + \Gamma^\top$ with respect to this ordered frame, where
	\begin{equation}
		\Gamma = \begin{bmatrix} 0_{d_1 \times d_1} & \gamma \\ 0_{d_2 \times d_1} & 0_{d_2 \times d_2} \end{bmatrix}, \label{eq:Gamma}
	\end{equation}
	with $\gamma \in \Real^{d_1 \times d_2} $ being the matrix of deception parameters $\gamma_{j}^{k}$.
	To prove the proposition, it suffices to show that $I + \Gamma^\top$, or, equivalently, $I+\Gamma$, is invertible, since, in that case, $\{\tilde{f}_{i,j}(x)\}$ is a basis of $T_x\Dom$, the rank-$d_i$ subspaces $\tilde{\Ei}_i(x)$ are well-defined, and the direct-sum decomposition $T_x\Dom = \tilde{\Ei}_1(x) \oplus \tilde{\Ei}_2(x)$ follows from the basis property. Since the bottom-left block of $\Gamma$ is zero, the matrix $I + \Gamma$ is block lower-triangular, and consequently $\det(I + \Gamma) = \det(I_{d_1}) \cdot \det(I_{d_2})= 1$, so $I + \Gamma$ is invertible at every $x \in \Dom$. Since $\{f_{i,j}(x)\}$ is a basis of $T_x\Dom$ at every $x \in \Dom$ by Assumption \ref{asmp:indpenendent_families}, and $I + \Gamma$ is invertible, the tilted family $\{\tilde{f}_{i,j}(x)\}$ is also a basis of $T_x\Dom$. In particular, $\{\tilde{f}_{i,j}(x)\}_{j=1}^{d_i}$ are linearly independent and span the rank-$d_i$ subspace $\tilde{\Ei}_i(x)$, and the decomposition $T_x\Dom = \tilde{\Ei}_1(x) \oplus \tilde{\Ei}_2(x)$ follows from dimension counting.
	\subsubsection{Proof of Theorem \ref{thm:deception_stability_IFT}}
	The proof proceeds in three steps. First, we recast the equilibrium condition \eqref{eq:eqbm_tilted} as a smooth map $F : \Dom \times \Real^{d_1 \times d_2} \to \Real^n$ whose zero set parametrizes the equilibria of \eqref{eq:liebra_sys_deception} as a function of $\gamma$. Second, we apply the implicit function theorem at $(x^\star, 0)$ using the non-degeneracy of the nominal Jacobian $A|_{x^\star}$ to obtain the smooth map $X^\star$. Third, we use continuity of the spectrum of the deceptive Jacobian $\tilde{A}|_{X^\star(\gamma)}$ to transfer local exponential stability from $\gamma = 0$ to a neighborhood $\Omega$. Let $\{\vartheta^{i,j}\}$ denote the dual co-frame of $\{f_{i,j}\}$ on $\Dom$, as in \eqref{eq:dual}. The equilibrium condition \eqref{eq:eqbm_tilted} for system \eqref{eq:liebra_sys_deception} reads $\grad_{\tilde{\Ei}_1}(J_1)(x) = 0$, $\grad_{\tilde{\Ei}_2}(J_2)(x) = 0$. Since $\tilde{f}_{1,j} = f_{1,j}$, the first condition is independent of $\gamma$ and amounts to $dJ_1[f_{1,j}](x) = 0$, for all $j = 1, \ldots, d_1$. Since $\tilde{f}_{2,j} = f_{2,j} + \sum_{k=1}^{d_1} \gamma_j^{k}\, f_{1,k}$, the second condition is
	\[
	dJ_2[\tilde{f}_{2,j}](x) = dJ_2[f_{2,j}](x) + \sum_{k=1}^{d_1}\gamma_j^{k}\, dJ_2[f_{1,k}](x) = 0,
	\]
	for $j = 1, \ldots, d_2$. We now introduce the smooth map $F : \Dom \times \Real^{d_1 \times d_2} \to \Real^n$ (with $n = d_1 + d_2$) by its $n$ scalar components
	\begin{equation}\label{eq:F}
		\begin{aligned}
			F_{1,j}(x, \gamma) &= dJ_1[f_{1,j}](x), \\
			F_{2,j}(x, \gamma) &= dJ_2[f_{2,j}](x) + \sum_{k=1}^{d_1} \gamma_j^{k}\, dJ_2[f_{1,k}](x).
		\end{aligned}
	\end{equation}
	Then, $x \in \Dom$ is an equilibrium of \eqref{eq:liebra_sys_deception} with deception parameter $\gamma$ if and only if $F(x, \gamma) = 0$. By construction, $F(x^\star, 0) = 0$ (the nominal equilibrium conditions \eqref{eq:eqbm_uncoupled} at $x^\star$).
	
	\medskip
	
	The partial derivative $\partial_x F(x^\star, 0) : T_{x^\star}\Dom \to \Real^n$ is the linear map with scalar components, for $v \in T_{x^\star}\Dom$,
	\begin{align*}
		\partial_x F_{1,j}(x^\star, 0)[v] &= d\big(dJ_1[f_{1,j}]\big)[v]\big|_{x^\star}, \\
		\partial_x F_{2,j}(x^\star, 0)[v] &= d\big(dJ_2[f_{2,j}]\big)[v]\big|_{x^\star}.
	\end{align*}
	Expanding $v = \sum_{i',p'} \vartheta^{i',p'}[v]\, f_{i',p'}(x^\star)$ and using the orthonormality of the frame, we obtain that
	\[
	\partial_x F_{i,j}(x^\star, 0)[v] = \sum_{i',p'} d\big(dJ_i[f_{i,j}]\big)[f_{i',p'}]\big|_{x^\star}\, \vartheta^{i',p'}[v].
	\]
	By the first equality in \eqref{eq:block-entries} of Theorem \ref{thm:jacobian_simplification}, the coefficients $d(dJ_i[f_{i,j}])[f_{i',p'}]|_{x^\star}$ are precisely the entries of the matrix representation of the nominal Jacobian $A|_{x^\star}$ with respect to the orthonormal frame $\{f_{i,j}\}$. Therefore, under the identification $\Real^n \simeq T_{x^\star}\Dom$ provided by the frame $\{f_{i,j}(x^\star)\}$, the linear map $\partial_x F(x^\star, 0)$ coincides with the matrix representation of $A|_{x^\star}$:
	\begin{equation}
		\partial_x F(x^\star, 0) \;\simeq\; A|_{x^\star}. \label{eq:F-to-A}
	\end{equation}
	By the hypothesis of local exponential stability of $x^\star$ and the ``only if" direction of Proposition \ref{prop:linearization_stability}, $A|_{x^\star}$ is Hurwitz and, in particular, invertible. Hence $\partial_x F(x^\star, 0)$ is a linear isomorphism from $T_{x^\star}\Dom$ to $\Real^n$.

	Next, note that the map $F$ defined in~\eqref{eq:F} is smooth in both arguments (since $J_1, J_2, f_{i,j}$ are all smooth), $F(x^\star, 0) = 0$, and $\partial_x F(x^\star, 0)$ is a linear isomorphism. By the implicit function theorem, there exists an open neighborhood $\Omega \subset \Real^{d_1 \times d_2}$ of $\gamma = 0$ and a smooth map $X^\star : \Omega \to \Dom$ such that $X^\star(0) = x^\star$ and $F(X^\star(\gamma), \gamma) = 0$ for all $\gamma \in \Omega$. By construction, $X^\star(\gamma)$ is an equilibrium of \eqref{eq:liebra_sys_deception} with deception parameter $\gamma$ for every $\gamma \in \Omega$, and uniqueness in a neighborhood of $x^\star$ is part of the statement of the implicit function theorem.
	
	Finally, let $\tilde{A}(\gamma) := \tilde{A}|_{X^\star(\gamma)}$ denote the intrinsic Jacobian of the deceptive dynamics \eqref{eq:liebra_sys_deception}, evaluated at the deceptive equilibrium $X^\star(\gamma)$. Since both $X^\star$ and the frame $\{\tilde{f}_{i,j}\} = \{\tilde{f}_{i,j}(\gamma)\}$ depend smoothly on $\gamma$, and since the projected gradients $\grad_{\tilde{\Ei}_i}(J_i)$ depend smoothly on $\gamma$ through $\tilde{f}_{i,j}$, the Jacobian $\tilde{A}(\gamma)$ is a smooth function of $\gamma$ on $\Omega$. At $\gamma = 0$, the deceptive frame $\{\tilde{f}_{i,j}\}$ coincides with the nominal frame $\{f_{i,j}\}$, the deceptive metric $\tilde{g}$ coincides with $g$, the deceptive bundles $\tilde{\Ei}_i$ coincide with $\Ei_i$, and $X^\star(0) = x^\star$. Consequently, $\tilde{A}(0) = A|_{x^\star},$
	which is Hurwitz by hypothesis. Since the spectrum of a matrix depends continuously on its entries, and smoothness of $\tilde{A}(\gamma)$ in $\gamma$ implies continuity of its entries, there exists an open neighborhood $\Omega' \subset \Omega$ of $\gamma = 0$ such that $\tilde{A}(\gamma)$ remains Hurwitz for all $\gamma \in \Omega'$. Shrinking $\Omega$ to $\Omega'$ if necessary, the ``if" direction of the second statement of Proposition \ref{prop:linearization_stability} applied to \eqref{eq:liebra_sys_deception} gives that $X^\star(\gamma)$ is a locally exponentially stable equilibrium of \eqref{eq:liebra_sys_deception} for all $\gamma \in \Omega$.
	%

	\section{Conclusion}
	We studied model-free multi-agent decision making on manifolds with payoff-only measurements and control-affine actuation along state-dependent vector fields. We showed that, under a bundle-splitting condition, the interaction dynamics admits an intrinsic representation as a non-holonomic gradient play, i.e., a sum of projected Riemannian gradients, which characterizes the equilibria and yields a local stability test via a connection-based linearization. Finally, in an asymmetric-information model, we showed that a deceiver who embeds the victims' exploration signals into its strategy tilts their effective actuation directions, which generically shifts the equilibria to locations of the deceiver's choosing.
	
	\bibliographystyle{ieeetr}
	\bibliography{references}
	
	\vspace{-0.7cm}
	\begin{IEEEbiography}
		[{\includegraphics[width=1in,height=1.25in,clip,keepaspectratio]{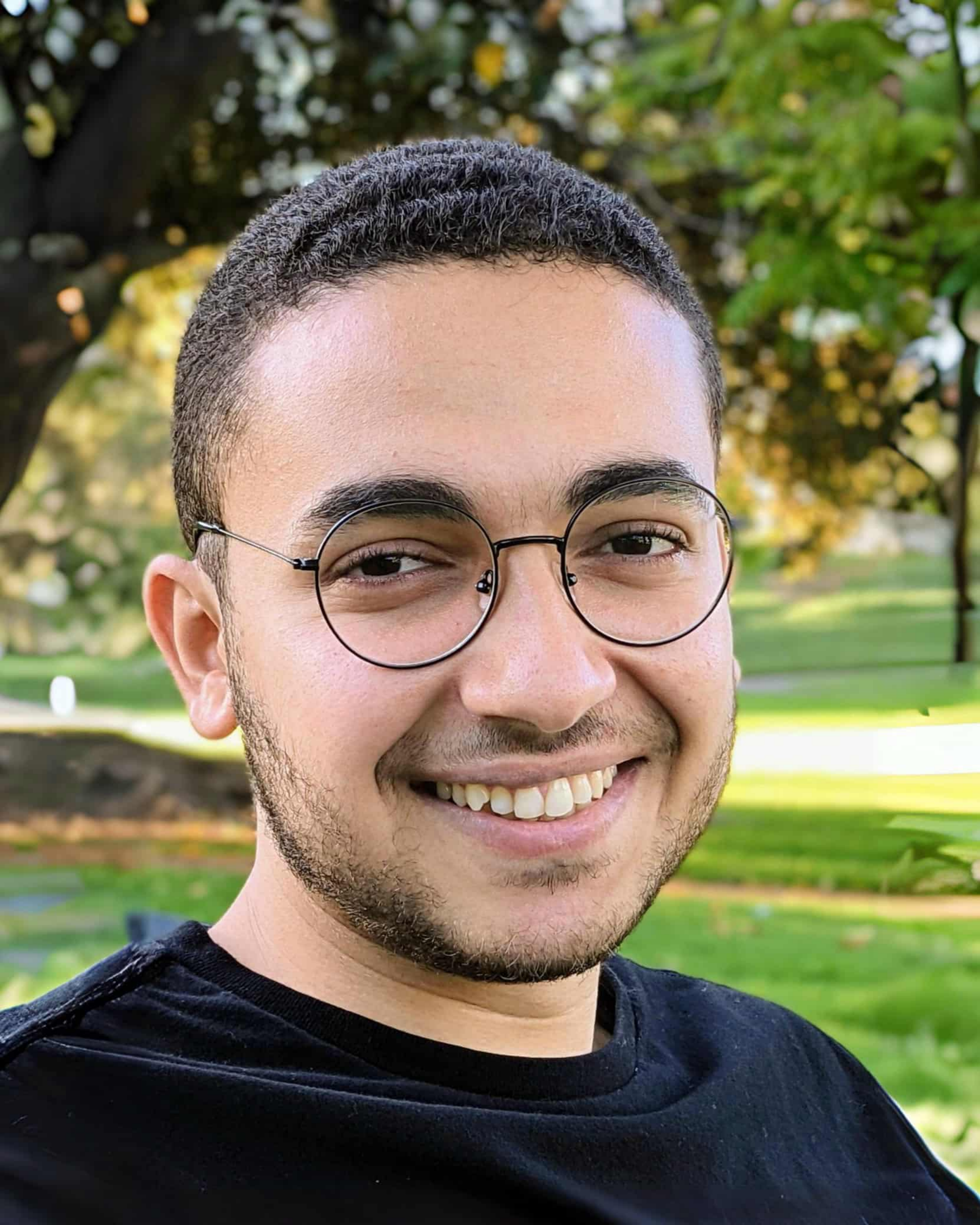}}]{Mahmoud Abdelgalil} is Assistant Professor in the Mechanical and Aerospace Engineering department at the University at Buffalo, State University of New York, Buffalo, NY. He received his BSc degree in Aerospace Engineering from Zewail University, Egypt, in 2018, and his MSc and PhD in Mechanical and Aerospace Engineering from the University of California, Irvine, in 2020 and 2023, respectively, followed by a Postdoctoral appointment in the Electrical and Computer Engineering department at the University of California, San Diego. He is a recipient of the Holmes fellowship and the Henry Samueli fellowship at the University of California, Irvine, and the awardee of the 2023 ACC Best Student Paper.
	\end{IEEEbiography}
	
	\vspace{-0.7cm}
	\begin{IEEEbiography}[{\includegraphics[width=1.09in,height=1.25in,clip,keepaspectratio]{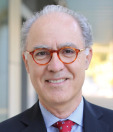}}]{Miroslav Krstic} Miroslav Krstic is Distinguished Professor of Mechanical and Aerospace Engineering, holds the Alspach endowed chair, and is the founding director of the Center for Control Systems and Dynamics at UC San Diego. He also serves as Senior Associate Vice Chancellor for Research at UCSD. As a graduate student, Krstic won the UC Santa Barbara best dissertation award and student best paper awards at CDC and ACC. Krstic has been elected Fellow of IEEE, IFAC, ASME, SIAM, AAAS, IET (UK), and AIAA (Assoc. Fellow) - and as a foreign member of the Serbian Academy of Sciences and Arts and of the Academy of Engineering of Serbia. He has received several awards, including the IEEE Roger W. Brockett Control Systems Award, Richard E. Bellman Control Heritage Award, Bode Lecture Prize, SIAM Reid Prize, ASME Oldenburger Medal, Nyquist Lecture Prize, Paynter Outstanding Investigator Award, and the Ragazzini Education Award and the IFAC Nonlinear Control Systems Award. 
		He serves as Editor-in-Chief of IEEE Transactions on Automatic Control.
	\end{IEEEbiography}
	
	\vspace{-0.7cm}
	\begin{IEEEbiography}[{\includegraphics[width=1.09in,height=1.25in,clip,keepaspectratio]{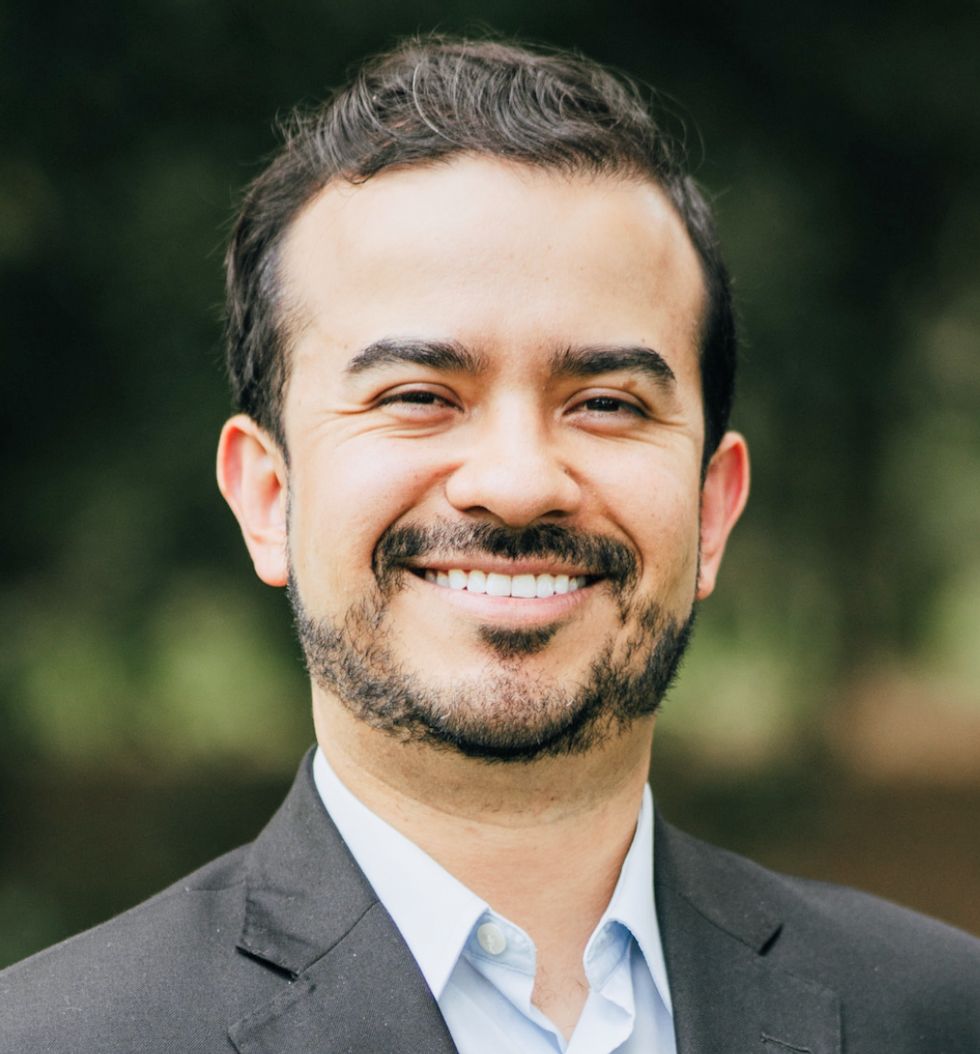}}]{Jorge I. Poveda} is Associate Professor in the Department of Electrical and Computer Engineering at the University of California, San Diego, La Jolla, CA, USA. He received double B.S. degrees in Mechanical Engineering and Electronics Engineering, both from the University of Los Andes, Colombia. Afterwards, he received his M.Sc. and Ph.D. degrees in Electrical and Computer Engineering from the University of California, Santa Barbara in 2016 and 2018, respectively, followed by a Postdoctoral position at Harvard University. He was also a Research Intern at the Mitsubishi Electric Research Laboratories in 2016 and 2017. He has received several awards, including the NSF CRII and CAREER Awards, the AFOSR and SHPE Young Investigator Awards, and the 2023 Donald P. Eckman Award from the American Automatic Control Council. 
	\end{IEEEbiography}
\end{document}